\documentclass[11pt]{article}

\usepackage[utf8]{inputenc}
\usepackage[T1]{fontenc}
\usepackage{textcomp}
\usepackage{amsmath,amssymb,amsthm}
\usepackage{lmodern}
\usepackage[a4paper, margin=0.8in]{geometry}
\usepackage{xcolor, pict2e}
\usepackage{microtype}
\usepackage{listings}
\usepackage{moreverb}
\usepackage{hyperref}
\usepackage[sans]{dsfont}
\usepackage[font=sf, labelfont={sf,bf}, margin=1cm]{caption}

\usepackage{import}
\usepackage{xifthen}
\usepackage{pdfpages}
\usepackage{transparent}
\usepackage{chngcntr}
\counterwithin{figure}{section}
\usepackage{subcaption}

\definecolor{auburn}{rgb}{0.43, 0.21, 0.1}

\usepackage{hyperref}
\hypersetup{
    colorlinks=true,
    linkcolor=blue,
    citecolor=auburn,
    urlcolor=blue,
    pdfborder={0 0 0}
}

\newtheorem*{theorem*}{Theorem}
\newtheorem{theorem}{Theorem}[section]
\newtheorem{proposition}[theorem]{Proposition}
\newtheorem{lemma}[theorem]{Lemma}
\newtheorem{corollary}[theorem]{Corollary}
\newtheorem{notation}[theorem]{Notation}
\newtheorem{definition}[theorem]{Definition}
\newtheorem{claim}[theorem]{Claim}

\theoremstyle{remark}
\newtheorem{remark}[theorem]{Remark}

\newcommand{\R}{\mathbb{R}}
\newcommand{\N}{\mathbb{N}}
\newcommand{\Z}{\mathbb{Z}}

\newcommand{\C}{\mathbb{C}}
\newcommand{\T}{\mathbb{T}}
\newcommand{\D}{\mathbb{D}}

\newcommand{\Cc}{\mathcal{C}}

\newcommand{\Fc}{\mathcal{F}}
\newcommand{\Gc}{\mathcal{G}}

\newcommand{\Lc}{\mathcal{L}}

\newcommand{\expect}{\mathbb{E}}
\newcommand{\Expect}[1]{\mathbb{E} \left[ #1 \right] }
\newcommand{\EXPECT}[2]{\mathbb{E}_{#1} \left[ #2 \right] }

\newcommand{\Prob}[1]{\mathbb{P} \left( #1 \right) }
\newcommand{\PROB}[2]{\mathbb{P}_{#1} \left( #2 \right) }

\renewcommand{\P}{\mathbb{P}}
\newcommand{\E}{\mathbb{E}}

\newcommand{\abs}[1]{\left\vert #1 \right\vert}
\newcommand{\norme}[1]{\left\| #1 \right\| }

\newcommand{\floor}[1]{\left\lfloor #1 \right\rfloor}
\newcommand{\ceil}[1]{\left\lceil #1 \right\rceil}
\newcommand{\indic}[1]{ \mathbf{1}_{ \left\{ #1 \right\} } }
\newcommand{\eps}{\varepsilon}

\DeclareMathOperator{\CR}{CR}
\DeclareMathOperator{\Int}{Int}
\DeclareMathOperator{\Range}{Range}
\renewcommand{\d}{\mathrm{d}}

\renewcommand{\Cap}{\mathrm{Cap}}
\newcommand{\cyl}{\mathtt{C}}

\def \loopmeasure{\mu^{\rm loop}}
\def \bubmeasure{\mu^{\rm bub}}

\title{Crossing exponent in the Brownian loop soup}
\author{Antoine Jego\thanks{EPFL; antoine.jego@epfl.ch} \and Titus Lupu\thanks{Sorbonne Université and Université Paris Cité, CNRS, Laboratoire de Probabilités, Statistique et Modélisation, F-75005 Paris, France; titus.lupu@sorbonne-universite.fr} \and Wei Qian\thanks{City University of Hong Kong; weiqian@cityu.edu.hk}}
\date {}

\numberwithin{equation}{section}

\begin{document}

\maketitle

\abstract{
We study the clusters of loops in a Brownian loop soup in some bounded two-dimensional domain with subcritical intensity $\theta \in (0,1/2]$.
We obtain an exact expression for the asymptotic probability of the existence of a cluster crossing a given annulus of radii $r$ and $r^s$ as $r \to 0$ ($s >1$ fixed).
Relying on this result, we then show that the probability for a macroscopic cluster to hit a given disc of radius $r$ decays like $|\log r|^{-1+\theta+ o(1)}$ as $r \to 0$.
Finally, we characterise the polar sets of clusters, i.e. sets that are not hit by the closure of any cluster, in terms of $\log^\alpha$-capacity.

This paper reveals a connection between the 1D and 2D Brownian loop soups. This connection in turn implies the existence of a second critical intensity $\theta = 1$ that describes a phase transition in the percolative behaviour of large loops on a logarithmic scale targeting an interior point of the domain.
}

\setcounter{tocdepth}{2}
\tableofcontents

\section{Introduction}

For a planar domain $D \subset \C$, the Brownian loop soup $\Lc_D^\theta$ in $D$ with intensity $\theta >0$ is a random collection of Brownian loops lying in $D$ that is distributed according to a Poisson point process with intensity $\theta \loopmeasure_D$. The measure $\loopmeasure_D$ on Brownian loops is defined by
\begin{equation}
\loopmeasure_D( \d \wp ) = \int_D \d z \int_0^\infty \frac{\d t}{t}\, p_D(t,z,z)\, \P_D^{t,z,z}( \d \wp)
\end{equation}
where $p_D$ is the heat kernel in $D$ and $\P_D^{t,z,z}$ denotes the law of a Brownian bridge in $D$ from $z$ to $z$ of duration $t$. Because the total mass of $\loopmeasure_D$ is infinite, $\Lc_D^\theta$ contains countably infinitely many loops a.s.

First introduced by \cite{Lawler04}, the Brownian loop soup became a central object of study in 2D random conformal geometry, in particular due to its connections with the Gaussian free field (GFF) and with Conformal loop ensemble (CLE)/Schramm-Loewner evolution (SLE).
Note that a different choice of the intensity parameter $c=2\theta$ was used in \cite{Lawler04, SheffieldWernerCLE} and many other works, where $c$ corresponds to the ``central charge'' in conformal field theory. 
As established in the seminal work \cite{SheffieldWernerCLE}, the percolative property of the loop soup in the unit disk $\D$ (this can be easily extended to any domain $D$ such that $\C\setminus D$ is non polar for the Brownian motion) undergoes a phase transition:
\begin{itemize}
\item
Supercritical ($c>1, \theta>1/2$): there is a.s.\ a unique cluster of loops;
\item
Critical and subcritical ($c\le 1, \theta \leq 1/2$): there are a.s.\ infinitely many clusters. In this case, the outermost boundaries of the outermost clusters are distributed as a  CLE$_\kappa$ where $\kappa \in (8/3,4]$ and $\theta$ are related by
\begin{align}\label{theta-kappa}
2\theta=c=(6-\kappa)(3\kappa-8)/(2\kappa).
\end{align}
\end{itemize}
As a consequence of the relation to CLE, one can compute many exact quantities about the critical and subcritical loop soup. 
For example, for $\theta\in(0,1/2]$, the probability $p(\theta, n,r,R)$ of having $n\ge 2$ different clusters in $\Lc_\D^\theta$ that cross the annulus $R\D \setminus r\D$ for $R>r>0$ is the same as the probability that the (outer  or inner) boundaries of these $n$ clusters cross this annulus.
Even though \cite{SheffieldWernerCLE} only looked at the outer boundaries of the outermost clusters, both the inner and outer boundaries of any cluster can be transformed into outer boundaries of outermost clusters after some conditioning and conformal mapping (see e.g. \cite[Section 7]{GaoLiQian2022mutliple}).
We believe that with some additional work, it should be possible to show that 
\[p(\theta, n,r,R)=r^{\alpha_{2n}+o(1)} \text{ as } r\to 0, \quad \text{ for } \alpha_{2n}=(16n^2-(\kappa-4)^2)/(8\kappa)\]
where $\theta$ and $\kappa$ are related through \eqref{theta-kappa}, and $\alpha_{2n}$ is the $2n$-arm exponent of SLE (following from \cite{MR1879816,MR1879850,MR1879851,MR1899232}, see also \cite{wu2018arms}).

The main purpose of the current paper is to obtain the crossing probability for one cluster, i.e.\ $n=1$, for all intensities $\theta\in(0,1/2]$. This case is fundamentally different from the $n\ge 2$ case, because as a cluster crosses the annulus $R\D \setminus r\D$, its inner or outer boundaries typically do not cross this annulus. 
In particular, we do not expect to be able to express the crossing probability for $n=1$ naturally in terms of SLE or CLE. As we will reveal  below, the crossing probability for $n=1$ will decay as
$$|\log r|^{-1+\theta+o(1)},$$
much slower than the polynomial rate for $n\ge 2$. 
Heuristically, as a macroscopic cluster approaches a given point, it will contain infinitely many small loops that surround this point. In contrast, in the $n\ge 2$ case, we require each cluster not to contain small loops that surround the origin, in order to leave some space for the other cluster(s) to make the crossing(s), which is a much rarer event.

The value $c=1$ ($\theta=1/2$) is known to be critical for various reasons, and is the central charge associated with the Gaussian free field. As shown in  \cite{SheffieldWernerCLE}, the outer boundaries of the critical loop soup $(\theta=1/2)$ is distributed as an CLE$_4$. It is also known that SLE$_4$ or CLE$_4$ can be interpreted as the level lines of the Gaussian free field (see e.g.\ \cite{MR2486487,MR3101840,MR2525778,MR3708206,MR3936643}).
A more direct connection with the Gaussian free field was established by Le Jan \cite{LeJan2010LoopsRenorm, LeJan2011Loops} via the occupation time field.
Thanks to these connections to the GFF, the structure of the clusters at $\theta=1/2$ is very well understood 
\cite{LupuIsomorphism, Lupu19ConvCLE, QianWerner19Clusters, ALS1, ALS2}. However, the understanding of the clusters when $\theta < 1/2$ is still limited to their outer boundaries \cite{SheffieldWernerCLE, BCL16} and to the loops touching them \cite{Qian19BLportionboundary,Qian21DisconnectExponent,GaoLiQian2022mutliple}.

In this paper, we study in great details the clusters of the loop soup when $\theta \in (0,1/2]$, considerably refining the picture when $\theta \in (0,1/2)$. 
In a nutshell, we compute the crossing exponent of these clusters (Theorem \ref{T:large_crossing}) and give a characterisation of the polar sets of the clusters (Theorem \ref{T:polar}). We also reveal a connection between the 1D and 2D Brownian loop soups (Section \ref{S:intro_outline}) and show that it implies the existence of a second critical point $\theta =1$ (Section \ref{S:intro_2values}).

\subsection{Main results}\label{S:intro_crossing}

\begin{notation}
For any family $\Lc$ of loops and any sets $A, B \subset \C$, we will denote by $\{ A \overset{\Lc}{\leftrightarrow} B \}$ the event that there is a cluster of loops in $\Lc$ that intersects both $A$ and $B$.
\end{notation}

The first main result of this paper is:

\begin{theorem}\label{T:large_crossing}
Let $\theta \in (0,1/2]$ and let $R_0 \in (0,1)$ be a macroscopic radius. Then
\[
\P \Big( R_0 \partial \D \overset{\Lc_\D^\theta}{\longleftrightarrow} r \partial \D \Big)
= |\log r|^{-1+ \theta +o(1)}
\quad \quad \text{as~} \quad r \to 0.
\]
\end{theorem}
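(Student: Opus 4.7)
The plan is to conformally open the annulus into a cylinder and reduce the percolation question to a one-dimensional problem. Set $L := \log(R_0/r)$, so that $L \sim |\log r|$. The map $z \mapsto (-\log(|z|/R_0), \arg z)$ sends the annulus $\{r \le |z| \le R_0\}$ to the cylinder $\Cf_L := [0,L] \times \T$, and by conformal invariance of $\loopmeasure$ the loops of $\Lc_\D^\theta$ contained in this annulus push forward to a Brownian loop soup of intensity $\theta$ on $\Cf_L$; the event under study becomes the existence of a cluster of loops on $\Cf_L$ touching both boundary circles. Boundary effects from loops of $\Lc_\D^\theta$ that leave the annulus into $R_0 \D \setminus r\D$ should be controlled by a standard sprinkling argument at the macroscopic scale $R_0$ and absorbed in the $o(1)$.

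The heart of the proof is a radial projection. For a loop $\wp$ on $\Cf_L$ set $\pi(\wp) := [\min_\wp x, \max_\wp x] \subset [0,L]$, where $x$ is the height coordinate; this yields a Poisson point process of intervals on $[0,L]$. I would argue that, on the scales relevant to the crossing event, the cluster structure of the 2D loop soup is captured by the overlap-cluster structure of these projected intervals, which in turn matches a 1D Brownian loop soup on $[0,L]$ of the same intensity $\theta$. The matching rests on two observations: (i) any two loops on $\Cf_L$ that wind around the cylinder and have overlapping projections must intersect, by a topological obstruction, so winding loops cluster via overlaps of their $\pi$-images; and (ii) integrating the 2D Brownian loop measure over the $\T$-factor collapses it to a constant multiple of the 1D Brownian loop measure on $[0,L]$, matching intensities. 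The key 1D input is that the probability that some cluster in the 1D Brownian loop soup of intensity $\theta$ on $[0,L]$ crosses the interval decays like $L^{-1+\theta+o(1)}$ as $L\to\infty$; this I would derive through a Markov/renewal analysis of the rightmost endpoint reachable from $0$, whose dynamics are governed by a stable subordinator whose index encodes $\theta$, the extinction/hitting exponent producing precisely $-1+\theta$. The threshold $\theta = 1$ then appears naturally as the point where this exponent changes sign, matching the ``second critical intensity'' announced in the introduction.

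Combining these ingredients, the upper bound is almost immediate: any 2D cluster crossing $\Cf_L$ forces a chain of overlapping projected intervals covering $[0,L]$, so the 2D crossing probability is dominated by its 1D counterpart, giving $L^{-1+\theta+o(1)}$. The main obstacle, and the main technical work, is the matching lower bound: one must realize a 1D crossing by actual intersecting 2D loops, without paying an extra polylogarithmic factor. The danger is that overlapping projections do not imply intersection for non-winding 2D loops, so a ``fake'' 1D crossing could be built from short non-winding loops that are in fact disconnected in $\Cf_L$. My strategy would be to work only with winding loops: their $\pi$-projections, by observation (ii) restricted to the winding sector, still define a 1D loop process whose intensity matches the 1D Brownian loop measure up to multiplicative constants, so the exponent $-1+\theta$ survives. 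Quantifying that winding 2D loops alone suffice to produce crossings with probability $L^{-1+\theta+o(1)}$, uniformly over the $\Theta(\log L)$ dyadic scales of the cylinder, is the crux of the argument.
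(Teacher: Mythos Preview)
Your approach is exactly the 1D--2D heuristic that the paper sketches in Section~\ref{S:intro_outline} (see \eqref{E:1D_2D} and the surrounding discussion) but deliberately does \emph{not} turn into a proof. The paper instead proceeds by: (i) proving that the rescaled crossing probabilities $F_\delta(s_1,s_2)$ converge (Theorem~\ref{T:convergence_crossing}) via tightness plus the fact that any subsequential limit satisfies the integral equation \eqref{E:T_fixed_point}, obtained by exploring one ``layer'' of loops from the outside; (ii) showing this equation has a unique fixed point in a weighted space by a Banach contraction (Lemma~\ref{L:fixed-points}), and identifying it with the Bessel hitting probability (Theorem~\ref{T:Bessel}), which pins down the exponent $1-\theta$; and (iii) transferring the exponent of $f_\infty$ to macroscopic crossings by FKG-based supermultiplicativity for the lower bound and an iterated BK-type exploration (Lemma~\ref{L:BK_inequality}) for the upper bound.

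There is a genuine gap in your argument, and it lies precisely in observation (ii). The radial pushforward of the cylinder loop measure is
\[
\boldsymbol{\pi}_* \loopmeasure_\cyl(\d \wp^{\rm 1D}) = \int_0^\infty \d t \int_0^\infty \d x \,\big\{ 2\pi p_{\mathbb{S}^1}(t,1,1) \big\}\, \frac{p_{\R^+}(t,x,x)}{t} \,\P_{\R^+}^{t,x,x}( \d \wp^{\rm 1D} ),
\]
and the bracketed factor equals $1$ only in the limit $t\to\infty$; it diverges like $(2\pi/t)^{1/2}$ as $t\to 0$. Hence the projected interval process strictly dominates the 1D loop soup of intensity $\theta$, with an uncontrolled excess of short intervals. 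This reverses the direction you need for the upper bound: from ``2D crossing $\Rightarrow$ projected intervals cover $[0,L]$'' you only get $\P(\text{2D crossing}) \le \P(\text{projected covering})$, and the right-hand side is \emph{at least} the 1D $\theta$-covering probability, not at most --- so no upper bound of the form $L^{-1+\theta+o(1)}$ follows. You have in fact inverted which direction is hard: the lower bound via winding loops is the more tractable one (the paper does use a version of this comparison, but only for $\theta\in(1/2,1)$ in Theorem~\ref{T:perco_large_loops}, after first restricting to large loops so that the effective 1D intensity drops below $1$). Even there, ``matches up to multiplicative constants'' is not enough, since a 1D soup of intensity $c\theta$ has crossing exponent $1-c\theta$; you would need the large-loop intensity to be asymptotically \emph{exactly} $\theta$ and then to prove that only the tail of the interval-length measure governs the exponent --- which is essentially the content of Theorem~\ref{T:perco_large_loops}, established in the paper only \emph{after} Theorem~\ref{T:large_crossing} and by the same integral-equation machinery.
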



In the companion paper \cite{JLQ23b}, we build and study the properties of a field naturally associated to a subcritical loop soup. This field can be thought of as the generalisation of the Gaussian free field to subcritical intensities $\theta < 1/2$. Theorem \ref{T:large_crossing} above is a crucial input in \cite{JLQ23b} to show that the covariance of this field at points $x$ and $y$ blows up like $|\log |x-y||^{2(1-\theta) + o(1)}$ as $x-y \to 0$.

Theorem \ref{T:large_crossing} can be seen as an interpolation of the two known cases $\theta =1/2$ and $\theta \to 0^+$. Indeed, when the intensity $\theta = 1/2$, the connection with the Gaussian free field allows one to show that the crossing probability is asymptotic to a constant times $|\log r|^{-1/2}$ (see Lemma \ref{L:theta=1/2}).
When $\theta \to 0^+$, the crossing probability can be informally compared with the probability that a Brownian path starting somewhere in the unit disc (not at the origin)
hits $r \D$ before reaching the unit circle. This probability decays like a constant times $|\log r|^{-1}$.
We believe that the probability in Theorem~\ref{T:large_crossing} should also be asymptotic to a constant times $|\log r|^{-1+\theta}$ (see also \cite[Section 9.5]{JLQ23b} for strong heuristics in this direction), but we do not achieve this precision in this work. However, remarkably, we are able to obtain the following exact expression of the ``scaling limit'' of a certain crossing probability (see Theorem \ref{T:convergence_crossing} for a stronger statement).
As the logarithmic decay suggests, the relevant events to consider here are scaled versions of one another only on a logarithmic scale.

\begin{theorem}\label{T:intro_convergence_crossing}
Let $\theta \in (0,1/2]$.
For all $s >1$, the following crossing probability converges
\begin{align}\label{eq:convergence_crossing}
\P \Big( \delta \partial \D \overset{\Lc_\D^\theta}{\longleftrightarrow} \delta^s \partial \D \Big)
\xrightarrow[\delta \to 0]{} f_\infty(s)
\end{align}
where 
\begin{align}\label{eq:f_infty}
 f_\infty(s)= \frac{\sin(\pi \theta)}{\pi} \int_{s-1}^\infty t^{\theta-1} (t+1)^{-1} \d t.
\end{align}
\end{theorem}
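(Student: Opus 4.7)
My strategy is to reduce the 2D crossing probability to a stationary 1D renewal statement on the logarithmic radial scale and invoke the Dynkin--Lamperti arcsine law for the overshoot of a $\theta$-stable subordinator.

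First, I localise. The Brownian loop measure on $\C$ is scale-invariant and loops staying far from $\partial \D$ are essentially insensitive to it, so I would show that the contribution to $\{\delta \partial \D \overset{\Lc_\D^\theta}{\longleftrightarrow} \delta^s \partial \D\}$ from loops whose outer radius exceeds some small fixed power $\delta^\eta$ is negligible. Combined with Brownian rescaling, this reduces the problem to a stationary question on the logarithmic radial scale: starting from level $L := |\log \delta|$, what is the probability that a suitable cluster extends past level $sL$?

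Second, I project onto the log-radius axis. For each loop $\wp$, let
\[
I(\wp) = [\log r_{\min}(\wp), \log r_{\max}(\wp)], \qquad r_{\min}(\wp) := \min_{z \in \wp} |z|, \quad r_{\max}(\wp) := \max_{z \in \wp} |z|.
\]
The random collection $\{I(\wp)\}_\wp$ is a Poisson point process of intervals on $\R$, whose intensity is (away from $0$) translation-invariant by Brownian scaling. The key geometric input is that a planar Brownian loop of large radial log-extent winds around the origin enough that 2D cluster connectedness between two such loops is essentially equivalent to overlap of their intervals $I(\wp)$. Thus, up to a controlled error coming from ``thin'' loops, the 2D event $\{\delta \partial \D \overset{\Lc_\D^\theta}{\longleftrightarrow} \delta^s \partial \D\}$ translates into the 1D event that a single connected component of $\bigcup_\wp I(\wp)$ covers $[-sL, -L]$. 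The resulting 1D cluster structure should match that of an effective 1D Brownian loop soup of intensity $\theta$, whose set of cluster endpoints is, for $\theta < 1$, a stationary regenerative set distributed (up to rescaling) as the range of a $\theta$-stable subordinator --- this is the announced 1D--2D connection.

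Finally, by the Dynkin--Lamperti arcsine law applied to this $\theta$-stable regenerative set, if $D_L$ denotes the first cluster endpoint beyond level $L$ then $D_L/L$ converges in distribution and
\[
\P(D_L > s L) \xrightarrow[L \to \infty]{} \int_{s-1}^\infty \frac{\sin(\pi \theta)}{\pi}\, t^{\theta - 1}(t+1)^{-1}\, \d t = f_\infty(s),
\]
which is exactly the desired limit. The hardest step will be the 1D--2D matching: one must rigorously replace 2D cluster connectivity by 1D overlap, controlling both (i) loops of small radial extent that could spuriously connect distinct 1D clusters in 2D and (ii) 1D overlaps between loops that do not actually intersect in $\C$. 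A secondary but substantial task is identifying the limiting 1D cluster-endpoint set as the range of a $\theta$-stable subordinator; the fact that the tail exponent of the cluster gaps is exactly $\theta$ is the quantitative heart of the 1D--2D correspondence and is what produces the $\theta$-dependence in the final formula.
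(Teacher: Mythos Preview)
Your proposal follows the heuristic the paper itself lays out in Section~\ref{S:intro_outline} (``Relation between 1D and 2D loop soups'' and the paragraph on regenerative sets), but that heuristic is \emph{not} the paper's proof, and the step you flag as hardest --- the 1D--2D matching --- is precisely the step the authors do not know how to carry out rigorously. They write that Theorem~\ref{T:Bessel} is ``a strong indication that such a relationship should hold'', i.e.\ they regard the direct reduction you propose as open. Concretely, your two error terms (i) and (ii) are both serious: for (ii), the event that two large loops have overlapping radial intervals but fail to intersect in $\C$ is not obviously negligible at the level of \emph{clusters}, since a 2D cluster is built from an a priori uncontrolled number of loops and the errors could accumulate; for (i), small loops with tiny radial extent can still weld two radially disjoint large loops in 2D, so cutting them out may change the 2D connectivity in a way you have no a priori bound for (and for $\theta\in(1/2,1)$ it provably does, cf.\ Theorem~\ref{T:perco_large_loops}).

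The paper circumvents all of this. Its proof of Theorem~\ref{T:intro_convergence_crossing} is: (a) tightness of $(s_1,s_2)\mapsto p_{\delta^{s_1},\delta^{s_2}}$ via an equicontinuity estimate obtained from FKG and single-loop crossing/surrounding bounds (Lemma~\ref{L:crossing_tightness}); (b) every subsequential limit satisfies the integral equation \eqref{E:T_fixed_point}, derived by peeling off the outermost layer of loops that exit the unit disc (Lemma~\ref{L:crossing_equation_sub}, Corollary~\ref{C:crossing}); (c) the integral operator is a strict contraction on the weighted space $\Fc_\alpha$ for $\alpha\in(0,1-\theta)$, so the fixed point is unique (Lemma~\ref{L:fixed-points}, using the a priori decay from the $\theta=1/2$ case); (d) one checks by a direct Bessel computation (Lemma~\ref{L:Bessel_explicit}) that the arcsine expression \eqref{eq:f_infty} is also a fixed point in $\Fc_\alpha$, hence equals $f_\infty$. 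The Dynkin--Lamperti law enters only at the very end, as a \emph{candidate} to verify against the integral equation, never via a geometric 1D reduction. If you want your route to work, you would need to supply the 1D--2D matching as a standalone theorem; the paper's machinery gives a way around it.
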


See \eqref{E:arcsine_law} for the link with the generalised arcsine law with parameter $1-\theta$ and a discussion concerning regenerative sets.

Theorem~\ref{T:intro_convergence_crossing} is another demonstration of some form of ``exact solvability'' of the loop soup (in addition to the ``solvability'' coming from its connection with CLE). To prove Theorem~\ref{T:intro_convergence_crossing}, we first work out a functional equation that is satisfied by $f_\infty$, and then identify $f_\infty$ as the unique fixed point of the functional equation.
The outline of the proof of Theorem~\ref{T:intro_convergence_crossing} will be further explained in Section~\ref{S:intro_outline}, where we also point out the intimate connection between $f_\infty$ and the 1D loop soup.

One can observe from \eqref{eq:f_infty} that 
\begin{equation}
\label{E:f_infty_asymp}
f_\infty(s) \sim \frac{\sin(\pi \theta)}{\pi(1-\theta)} s^{\theta-1}, \quad \quad \text{as} \quad s \to \infty.
\end{equation}
We will first focus on showing Theorem~\ref{T:intro_convergence_crossing}, and then show (in Section \ref{S:large_crossing}) that the exponent $1-\theta$ indeed describes the decay of the probability of a macroscopic crossing, thus proving Theorem~\ref{T:large_crossing}.

\medskip

The crossing probability is an important quantity of interest in many other models in statistical mechanics. In the well-known model of Bernoulli site percolation on the triangular lattice, at criticality, the probability of one cluster crossing an annulus $\D\setminus \eps\D$ decays like $\eps^{5/48+o(1)}$ \cite{MR1887622}.  One can also look at the probability of having $k\ge 2$ clusters, that is, $2k$ disjoint ``arms'' with alternating colors (or more generally any given number of arms with prescribed colors). Many of these exponents have been computed \cite{MR1879816}, thanks to the relation to SLE in the scaling limit \cite{MR1851632}. 
In the subcritical regime, the crossing probability for a percolation cluster decays exponentially fast (instead of polynomially), marking a sharp phase transition \cite{men86, MR874906}.

In contrast, in our setting of the Brownian loop soup, the crossing probability decays like a power of $|\log\eps|$, for all intensities $\theta\le1/2$. This is closer to the setting of a system depending on two parameters (e.g. temperature and pressure) that is critical on a one-dimensional curve. The random cluster model and long range percolation on $\Z^d$ are two such examples. The latter has the interesting feature that there are two points of non-analyticity on the critical line for the one-arm exponent (see the introduction of \cite{hutchcroft2022sharp} for background on critical long range percolation). This shares some similarities with the Brownian loop soup: we show in Section \ref{S:intro_2values} that $\theta = 1/2$ and $\theta=1$ can be seen as two critical intensities for the percolation behaviour of the Brownian loop soup.


\medskip

We now discuss a consequence of Theorem \ref{T:large_crossing} on a characterisation of the sets that can be hit by a cluster of the loop soup. Recall that a set $A$ is said to be polar for planar Brownian motion $(B_t)_{t \geq 0}$ if for all starting point $x$,
$
\PROB{x}{\exists t>0: B_t \in A} = 0.
$
Since there are countably many loops in a Brownian loop soup, if a set $A$ is polar for 2D Brownian motion, then a.s. it will not be visited by any cluster. However, we are going to see that the \emph{closures} of the clusters can hit much thinner sets.
Identifying the polar sets of a random set provides a way of understanding how large and spread out that set is.

\begin{definition}
Let $D \subset \C$ be a bounded open domain and let $A \subset D$ be a Borel set. We will say that $A$ is polar for the clusters of $\Lc_D^\theta$ if
\[
\Prob{ \exists~ \Cc \text{~cluster~of~} \Lc_D^\theta: \overline{\Cc} \cap A \neq \varnothing } = 0.
\]
\end{definition}

For any measurable function $K : \C \times \C \to [0,\infty]$, let
\begin{equation}
\Cap_{K}(A) := \left( \inf \left\{ \int K(x,y) \mu(\d x) \mu(\d y): \mu \text{~probability~measure~on~} A \right\} \right)^{-1}
\end{equation}
be the capacity of $A$ with respect to the kernel $K$. These capacities can be thought of as a tool measuring precisely the size of set. For instance, by Frostman's lemma, the Hausdorff dimension of $A$ is the supremum of $\alpha \geq 0$ such that $\Cap_K(A) >0$ where $K(x,y) = |x-y|^{-\alpha}$. Here we will be interested in measuring the size of very thin sets and we will choose kernels $K$ of the form $K(x,y) = |\log |x-y||^\alpha$ for $\alpha >0$. We will denote $\Cap_{\log^\alpha}$ the capacity associated to that kernel.

\begin{theorem}\label{T:polar}
Let $\theta \in (0,1/2]$, $D \subset \C$ be a bounded open domain and let $A \subset D$ be a closed set.
\begin{itemize}
\item
If $A$ is polar for the clusters of $\Lc_D^\theta$, then $\Cap_{\log^\alpha}(A) = 0$ for all $\alpha > 1-\theta$.
\item
Conversely, if $A$ is not polar for the clusters of $\Lc_D^\theta$, then $\Cap_{\log^\alpha}(A) > 0$ for all $\alpha < 1-\theta$.
\end{itemize}
\end{theorem}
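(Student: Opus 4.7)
My plan is to derive Theorem \ref{T:polar} by combining the sharp crossing exponent of Theorem \ref{T:large_crossing} with classical potential theory at the logarithmic scale: a covering / first moment argument for one inclusion and a Paley--Zygmund / second moment argument for the other. The common preliminary is to upgrade Theorem \ref{T:large_crossing} to a one-point hitting estimate: for a fixed macroscopic threshold $R_0 > 0$, $x$ in a compact subset of $D$, and $\eps$ small,
\[
p(x, \eps) := \P\bigl(\exists \Cc \text{ cluster of } \Lc_D^\theta: \overline{\Cc} \cap \overline{B(x, \eps)} \neq \varnothing,\ \mathrm{diam}(\Cc) \geq R_0 \bigr) = |\log \eps|^{-(1-\theta) + o(1)},
\]
with the $o(1)$ uniform in $x$. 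This follows from Theorem \ref{T:large_crossing} by sandwiching $D$ between concentric discs around $x$ and invoking the restriction / conformal covariance of the loop soup. One further checks (via a dyadic decomposition over top scales and scale invariance) that clusters of diameter below $R_0$ contribute negligibly to the event that $\overline{\Cc} \cap A \neq \varnothing$ for the sets $A$ considered, so polarity reduces to macroscopic clusters.

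For the direction \emph{not polar $\Rightarrow \Cap_{\log^\alpha}(A) > 0$ for all $\alpha < 1-\theta$}, I prove the contrapositive. Suppose $\Cap_{\log^\alpha}(A) = 0$ for some $\alpha < 1-\theta$. By Frostman's duality for the kernel $|\log|x-y||^\alpha$, for every $\eta > 0$ one can cover $A$ by balls $B(x_i, r_i)$ with $\sup_i r_i \leq \eta$ and $\sum_i |\log r_i|^{-\alpha} \leq \eta$. Setting $\delta := ((1-\theta)-\alpha)/2 > 0$, the one-point estimate gives $p(x_i, r_i) \leq |\log r_i|^{-(1-\theta)+\delta} \leq |\log r_i|^{-\alpha}$ for $r_i$ below a uniform threshold, and the union bound yields
\[
\P\bigl(\exists \Cc: \overline{\Cc} \cap A \neq \varnothing\bigr) \leq \sum_i p(x_i, r_i) \leq \sum_i |\log r_i|^{-\alpha} \leq \eta.
\]
Letting $\eta \to 0$ shows $A$ is polar.

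For the direction \emph{polar $\Rightarrow \Cap_{\log^\alpha}(A) = 0$ for all $\alpha > 1-\theta$}, I again prove the contrapositive, this time by a second moment method. Let $\mu$ be a probability measure on $A$ with finite $\log^\alpha$-energy for some $\alpha > 1-\theta$ and set
\[
M_\eps := \frac{1}{p(\eps)} \int_A \indic{E_\eps(x)}\, \mu(\d x),\qquad E_\eps(x) := \{\exists \Cc: \overline{\Cc} \cap \overline{B(x,\eps)} \neq \varnothing,\ \mathrm{diam}(\Cc) \geq R_0\}.
\]
The one-point estimate gives $\expect[M_\eps] \to \mu(A)$. The crucial ingredient is the two-point bound
\[
\P\bigl(E_\eps(x) \cap E_\eps(y)\bigr) \leq C\, \frac{p(\eps)^2}{p(|x-y|)}, \qquad |x-y| \geq 2\eps,
\]
which combined with $1/p(r) = |\log r|^{(1-\theta)+o(1)}$ yields $\expect[M_\eps^2] \leq C\int\int |\log |x-y||^{(1-\theta)+o(1)}\, \mu(\d x)\mu(\d y) < \infty$, the finiteness coming from $\alpha > 1-\theta$. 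Paley--Zygmund then gives $\P(M_\eps > 0) \geq c > 0$ uniformly in $\eps$. Since almost surely only finitely many macroscopic clusters exist, the union $U$ of their closures is compact; $\{M_\eps > 0\} \subset \{U \cap A_\eps \neq \varnothing\}$ where $A_\eps$ denotes the $\eps$-neighborhood of $A$, so passing to the limit along $\eps \to 0$ gives $\P(U \cap A \neq \varnothing) \geq c > 0$, which shows $A$ is not polar.

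The main obstacle is the two-point estimate. It encodes a quasi-multiplicative decoupling of the hitting events at two widely-separated points. I would prove it by decomposing a cluster touching $B(x,\eps)$ (resp.\ $B(y,\eps)$) into its portions inside $B(x, |x-y|/4)$ (resp.\ $B(y, |x-y|/4)$) and outside these two discs, invoking the restriction property of the loop soup (independence of loop restrictions to disjoint domains) together with the one-point estimate applied at each nested scale, and handling the case of a single cluster hitting both discs via a BK-type bound. A secondary technical point, addressed in the preliminary step, is the uniformity of the $o(1)$ in Theorem \ref{T:large_crossing} when the annulus is centred at a varying point, together with the reduction of the small-cluster contribution in the covering argument.
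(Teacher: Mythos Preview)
Your second-moment direction (contrapositive of ``polar $\Rightarrow \Cap_{\log^\alpha}(A)=0$ for $\alpha>1-\theta$'') is essentially the paper's argument. The paper fixes a single cluster $\Cc$ (the outermost one surrounding a chosen point of $D\setminus A$) and works with $J_\eps=\frac{1}{Z_\eps}\int\indic{\Cc\cap D(x,\eps)\neq\varnothing}\,\mu(\d x)$; this makes the two-point event exactly $\{D(x,\eps)\overset{\Lc_D^\theta}{\longleftrightarrow}D(y,\eps),\,D(x,\eps)\overset{\Lc_D^\theta}{\longleftrightarrow}\partial D(x,r_0)\}$, which is Proposition~\ref{P:two-point}. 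Your events $E_\eps(x)\cap E_\eps(y)$ allow two \emph{different} macroscopic clusters, so you would need an extra case (handled via a multi-arm bound, which is fine but is additional work). Your sketched decomposition for the two-point estimate is the right idea; the paper carries it out in detail in Sections~\ref{SS:two_annuli}--\ref{SS:p} (conditioning on the extremal radii $r_x,r_y,R$ reached by loops crossing $\partial D(x,|x-y|/2)\cup\partial D(y,|x-y|/2)$, then using independence of the remaining loops in the three regions).

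Your first-moment direction is a genuinely different route. The paper does not argue by covering; instead it proves ``not polar $\Rightarrow \Cap_{\log^\alpha}(A)>0$'' directly by constructing a measure: it picks a cluster $\Cc_i$ with $\P(\bar\Cc_i\cap A\neq\varnothing,\,\mathrm{diam}\,\Cc_i>\eta)>0$, takes $\mu$ to be the law of a measurably chosen point of $\bar\Cc_i\cap A$, and verifies $\int|\log|x-y||^\alpha\,\mu(\d x)\leq 1+C\sum_k k^{\alpha-1}\mu(D(y,e^{-k}))<\infty$ using Theorem~\ref{T:large_crossing} and summation by parts. Your covering approach is valid in spirit but contains an imprecision: $\Cap_{\log^\alpha}(A)=0$ does \emph{not} give a cover with $\sum_i|\log r_i|^{-\alpha}$ arbitrarily small (that would be zero Hausdorff content for the same gauge, which Frostman does not yield). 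What Frostman gives is $H_{|\log r|^{-\beta}}(A)=0$ for every $\beta>\alpha$. So pick $\beta\in(\alpha,1-\theta)$, cover with $\sum_i|\log r_i|^{-\beta}\leq\eta$, and choose $\delta<(1-\theta)-\beta$ so that $p(x_i,r_i)\leq|\log r_i|^{-\beta}$; then the union bound goes through. With this fix your argument works and is more elementary than the paper's measure construction, at the cost of passing through Hausdorff content. Finally, your ``dyadic decomposition over top scales'' for small clusters is unnecessary: since every cluster has positive diameter, $\{\exists\Cc:\bar\Cc\cap A\neq\varnothing\}=\bigcup_n\{\exists\Cc:\bar\Cc\cap A\neq\varnothing,\,\mathrm{diam}\,\Cc\geq 1/n\}$, so it suffices to prove the macroscopic statement for each fixed threshold.
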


Recall that Kakutani's theorem states that a closed set $A$ is polar for planar Brownian motion if and only if $\Cap_{\log^\alpha}(A) = 0$ for $\alpha = 1$ (see e.g. \cite[Theorem 8.20]{morters2010brownian}). Informally, this well-known result is the limiting case $\theta \to 0$ of Theorem \ref{T:polar} above.

With this notion of polar sets, Theorem \ref{T:large_crossing} immediately implies a more general statement:

\begin{corollary}\label{C:generalisation}
If $D$ is a bounded open domain, $x \in D$ and $A$ is a closed set which does not contain $x$ and which is not polar for the clusters of $\Lc_D^\theta$, then
\[
\P \Big( \exists ~\Cc \text{~cluster~of~} \Lc_D^\theta: \overline{\Cc} \cap A \neq \varnothing \text{~and~} \Cc \cap D(x,r) \neq \varnothing \Big) = |\log r|^{-1+ \theta +o(1)}
\quad \quad \text{as~} \quad r \to 0.
\]
\end{corollary}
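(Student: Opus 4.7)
The plan is to establish matching upper and lower bounds of the form $|\log r|^{-1+\theta+o(1)}$. For the upper bound, since $x \notin A$ and $A$ is closed, I would choose $r_0 > 0$ with $\overline{D(x, r_0)} \subset D$ and $\overline{D(x,r_0)} \cap A = \varnothing$. For $r < r_0$, any cluster $\Cc$ with $\overline{\Cc} \cap A \neq \varnothing$ and $\Cc \cap D(x,r) \neq \varnothing$ contains points both in $D \setminus D(x, r_0)$ and in $D(x, r)$; being path-connected as a chain of loops, it must cross the annulus $D(x, r_0) \setminus \overline{D(x, r)}$, so the event is contained in the annular crossing event $\{\partial D(x, r_0) \overset{\Lc_D^\theta}{\longleftrightarrow} \partial D(x, r)\}$. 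Theorem~\ref{T:large_crossing}, applied in the domain $D$ via the conformal invariance of the Brownian loop soup (so that $|\log r|$ is preserved up to an additive $O(1)$), then yields the upper bound.

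For the lower bound, I would reduce to the intermediate claim $(\star)$: \emph{there exist $r_0 > 0$ and $c_0 > 0$ such that $\P(\exists \Cc : \overline{\Cc} \cap A \neq \varnothing \text{ and } \Cc \cap \partial D(x, r_0) \neq \varnothing) \geq c_0$.} Assuming $(\star)$, I would use the restriction property of Poisson point processes to split $\Lc_D^\theta$ into two independent collections: loops contained entirely in $D(x, r_0)$, and loops not so contained. On the event in $(\star)$, the second collection contains a cluster $\Cc_{\mathrm{out}}$ whose closure meets $A$ and which, by the two-dimensional nature of Brownian loops touching $\partial D(x, r_0)$, leaves a macroscopic trace inside $D(x, r_0)$. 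The independent inner BLS (a copy of a Brownian loop soup in $D(x, r_0)$ of intensity $\theta$) then has, by Theorem~\ref{T:large_crossing} and conformal invariance, probability $\geq |\log r|^{-1+\theta+o(1)}$ of containing a cluster crossing from the macroscopic inner scale down to $\partial D(x, r)$; a standard FKG / quasi-multiplicativity argument, leveraging the macroscopic size of the target trace, then ensures with positive conditional probability that this inner crossing cluster merges with $\Cc_{\mathrm{out}}$, producing a single cluster of $\Lc_D^\theta$ touching both $A$ and $D(x, r)$.

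To prove $(\star)$, I would fix any $y \in A$ and a continuous curve in $D$ from $y$ to $x$, cover this curve by finitely many overlapping open discs $B_1, \ldots, B_N$ with $y \in B_1$, $B_N \subset D(x, r_0)$, and each intersection $B_i \cap B_{i+1}$ nonempty, and consider the independent sub-soups of $\Lc_D^\theta$ consisting of loops whose trace is entirely contained in each $B_i$. For each $i$, since $\theta > 0$ this sub-soup has positive probability of containing a loop whose trace intersects both $B_{i-1} \cap B_i$ and $B_i \cap B_{i+1}$; chaining these loops across consecutive discs produces with positive probability a single cluster of $\Lc_D^\theta$ touching a neighborhood of $y$ (hence $A$, since $y \in A$) and $D(x, r_0)$, which gives $(\star)$. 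The main obstacle is the merging step in the lower bound: quantitatively ensuring that the inner crossing cluster actually meets the random trace left by $\Cc_{\mathrm{out}}$ rather than missing it altogether, which will require exploiting the macroscopic size of this target together with a careful FKG / quasi-multiplicativity comparison with the crossing events of Theorem~\ref{T:large_crossing}.
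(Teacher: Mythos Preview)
Your upper bound is essentially the paper's argument, though the paper uses monotonicity via the inclusion $D \subset D(x, R_2)$ for some large $R_2$ rather than conformal invariance; this is simpler and avoids tracking how a conformal map distorts $|\log r|$.

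Your lower bound has a genuine gap in the proof of $(\star)$. The chain of loops you build through $B_1, \ldots, B_N$ produces a cluster intersecting $B_1$, i.e.\ a \emph{neighborhood} of $y$, but this does not imply its closure meets $A$: the set $A$ may be extremely thin near $y$ (for instance a single point, which is polar), and your chain has no reason to approach it. The non-polar hypothesis is precisely what guarantees some cluster whose closure touches $A$; you cannot replace it by a deterministic pointwise construction targeting a chosen $y\in A$.

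The paper circumvents both this gap and your acknowledged merging obstacle in one stroke. It considers the increasing event $E$ that some cluster $\Cc$ has $\overline{\Cc} \cap A \neq \varnothing$ \emph{and} $\Cc$ disconnects $\partial D(x, r_0/2)$ from $\partial D(x, r_0)$; non-polarity of $A$ (plus a standard FKG/chain argument) gives $\P(E) > 0$. On $E \cap \{\partial D(x, r) \leftrightarrow \partial D(x, r_0)\}$ the crossing cluster is \emph{forced} to intersect $\Cc$, since any path crossing the annulus must meet a disconnecting set; hence they merge and the resulting cluster joins $D(x,r)$ to $A$. A single application of FKG then yields the lower bound, with no splitting into inner/outer loop soups and no analysis of the random ``trace'' left by $\Cc_{\mathrm{out}}$. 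Replacing your $(\star)$ by this disconnection event repairs your argument and dissolves its main obstacle.
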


\paragraph*{Related works}
Let us mention that the clusters in loop soups have been also studied in dimensions $d\geq 3$, both on discrete lattices (random walk loop soups) and on metric graphs. 
Particularly investigated was the case $\theta=1/2$ due to the relation to the GFF
\cite{MR4112719, MR4557402, ChangDuLi23crit, CaiDing23HighD}.
See also the conjectures of Wendelin Werner about the scaling limits in continuum
for $\theta=1/2$ in dimensions $d\geq 3$ \cite{MR4237293}.
For studies of clusters of random walk loop soups for more general $\theta$,
we refer to \cite{MR3263044, MR3477785, MR3692311, vogel2020geometric}.
In dimensions $d\geq 3$ the one-arm probabilities of clusters decay as a power rather than as a power of $\log$ as in $d=2$.
More precisely, it can be shown that for every $\theta\in (0,1/2]$, the probability of a one-arm event is bounded between two powers, both on discrete lattice and on metric graph. 
At the metric graph level, 
the precise value of the exponent is known for $\theta=1/2$ in $d=3$ 
\cite{MR4112719, MR4557402} (this is a result on the metric graph GFF),
and in high dimensions $d>6$ \cite{MR4237293, CaiDing23HighD}.
At the discrete lattice level, the value of the exponent is known
in $d\geq 5$ for $\theta$ small enough
\cite{MR3477785}.

\subsection{Outline of the proof and further results}\label{S:intro_outline}
We now explain our approach to Theorem \ref{T:large_crossing}. Let us recall that we cannot rely on CLE tools since the event we consider is not a CLE-type event, meaning that this is not an event concerning the outermost boundaries of the clusters.
Obtaining a lower bound on the crossing probability should be in principle the easy direction. Indeed, this amounts to finding a good strategy to make the crossing and to be able to control the probability of this strategy. One such strategy could for instance consist in crossing successive annuli $r_{i+1} \partial \D \setminus r_{i} \D$ with one loop at a time ($r_0 = r < r_1 < \dots < r_n = e^{-1}$) and welding these successive crossings to ensure that a cluster has crossed the annuli $e^{-1} \D \setminus r \D$. Summing over all possible intermediate scales $r_1, \dots, r_n$, this should lead to a close-to-optimal strategy. Since the probability that there is a loop crossing a given annulus can be accurately estimated (see Lemma \ref{L:loopmeasure_annulus_quantitative}), one can lower bound the probability of this strategy (together with FKG inequality).
Although this computation can be explicitly written down, the analysis of the lengthy expressions one obtains when doing so seems far from simple.

More importantly, the above line of argument would not provide any upper bound on the crossing probability. This direction is indeed much more challenging since we need to control the probability of \emph{all} possible scenarios where the crossing event occurs. 
We therefore need to proceed differently. In our approach, we will obtain the upper and lower bounds (almost) at the same time.
Our main intermediate step towards proving Theorem \ref{T:large_crossing} is to prove a ``scaling limit'' result for the crossing probabilities, namely to prove \eqref{eq:convergence_crossing}.

Now, let's assume that the convergence \eqref{eq:convergence_crossing} has been established, without knowing yet that $f_\infty$ takes the simple form \eqref{eq:f_infty}, and let's see how we pursue.
A standard argument shows that $f_\infty$ is supermultiplicative: for all $s, t \geq 1$, $f_\infty(s) f_\infty(t) \leq f_\infty(st)$. See Lemma \ref{L:submultiplicativity} and Corollary \ref{C:submultiplicative}. In particular, Fekete's subadditive lemma implies the existence of a one-arm exponent $\xi_1 \geq 0$ such that
\begin{equation}
f_\infty(s) = s^{-\xi_1 + o(1)}
\quad \quad \text{as} \quad s \to \infty.
\end{equation}
We show in Section \ref{S:large_crossing} that this exponent indeed describes the decay of the probability of a macroscopic crossing, i.e. that
\[
\P \Big( r \partial \D \overset{\Lc_\D^\theta}{\longleftrightarrow} e^{-1} \partial \D \Big) = |\log r|^{- \xi_1 +o(1)}
\quad \quad \text{as~} \quad r \to 0.
\]
It then remains to compute the exponent $\xi_1$. In planar percolation, this step is usually done using SLE computations. Here, we exploit some of the exact solvability inherent to the Brownian loop soup by showing that $f_\infty$ satisfies some integral equation:

\begin{theorem}\label{T:integral}
The limiting function $f_\infty$ satisfies
\begin{equation}
\label{E:T_fixed_point}
f_\infty(s) = 1 - \Big( 1 - \frac{1}{s} \Big)^\theta + \theta (s-1)^\theta \int_1^\infty (s+t-1)^{-\theta-1} f_\infty(t) \d t,
\quad \quad s \geq 1.
\end{equation}
Moreover, for all $\alpha \in (0,1-\theta)$, $f_\infty$ is the only function belonging to $\Fc_\alpha$ \eqref{E:spaceF_alpha} satisfying \eqref{E:T_fixed_point}.
\end{theorem}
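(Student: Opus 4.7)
The plan is to derive the integral equation \eqref{E:T_fixed_point} via a one-step decomposition of the crossing event, and then to establish uniqueness by a contraction argument in $\Fc_\alpha$. Existence of $f_\infty$ is already granted by Theorem \ref{T:intro_convergence_crossing}, so the task for the first part is to identify the equation it must satisfy.

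To derive \eqref{E:T_fixed_point}, fix $\delta>0$ small and split the event $E_\delta(s) := \{\delta\partial\D \overset{\Lc_\D^\theta}{\leftrightarrow} \delta^s\partial\D\}$ according to whether a \emph{single} loop of $\Lc_\D^\theta$ already crosses the whole annulus $\delta\D \setminus \delta^s\D$. On the ``single-loop'' event, the Poisson structure gives a probability $1 - \exp(-\theta \mu_\delta(s))$, and the Brownian loop measure $\mu_\delta(s)$ of loops meeting both boundary circles converges to $-\log(1-1/s)$ as $\delta \to 0$ (an annular loop-measure computation in the spirit of Lemma \ref{L:loopmeasure_annulus_quantitative}). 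This yields the first term $1 - (1-1/s)^\theta$. On the complementary event, I would apply Mecke's (Palm) formula to extract from the crossing cluster a distinguished loop $\wp_0$, namely the ``outermost'' loop of the cluster: the loop touching $\delta\partial\D$ that reaches deepest inward without fully spanning. Parametrizing $\wp_0$ by the log-scale $\delta^t$ it reaches, a direct computation of the loop measure of such $\wp_0$ (via conformal invariance and the explicit loop measure on annuli) produces the Mecke-type prefactor $\theta(s-1)^\theta$ together with the kernel $(s+t-1)^{-\theta-1}$. Conditionally on $\wp_0$, the event that the rest of the cluster continues the crossing down to $\delta^s\partial\D$ is, after conformal rescaling, a crossing event across a smaller annulus whose aspect ratio diverges as $\delta \to 0$; by Theorem \ref{T:intro_convergence_crossing} this conditional probability converges to $f_\infty(t)$. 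As an internal consistency check, one verifies that $\int_1^\infty \theta(s-1)^\theta(s+t-1)^{-\theta-1}\,dt = (1-1/s)^\theta$, matching the total probability mass of the complementary event.

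The main technical obstacle is the error control when passing to the limit $\delta \to 0$ after fixing $\wp_0$ via Mecke. One has to show that the conditional crossing probability across the residual annulus really converges to $f_\infty$ \emph{uniformly in} $t$, and that the extra loops one effectively conditions on by Mecke do not perturb this limit. This will require asymptotic decoupling statements between loops at well-separated scales (so that $\wp_0$ does not affect the limiting small-scale crossing probability), along with a dominated convergence argument that handles the tail of the $t$-integral using the polynomial decay of $f_\infty$ at infinity implied by its boundedness and by \eqref{E:f_infty_asymp}.

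For uniqueness in $\Fc_\alpha$ with $\alpha \in (0, 1-\theta)$, rewrite \eqref{E:T_fixed_point} as $f = g + Lf$ with $g(s) = 1 - (1-1/s)^\theta$ and $Lf(s) = \theta(s-1)^\theta \int_1^\infty (s+t-1)^{-\theta-1} f(t)\, dt$, and equip $\Fc_\alpha$ with an appropriate weighted sup-norm. Then show that $L$ is a strict contraction: concretely, check that $L$ applied to the test function $s \mapsto s^{-\alpha}$ is bounded by $\rho \cdot s^{-\alpha}$ for some $\rho < 1$ uniformly in $s \geq 1$. This reduces to a Beta-function-type integral whose value is strictly less than $1$ precisely because $\alpha < 1-\theta$; the condition $\alpha > 0$ is needed so that the integral makes sense against $f \in \Fc_\alpha$ near $t = \infty$. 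Banach's fixed point theorem then yields uniqueness of $f_\infty$ in $\Fc_\alpha$.
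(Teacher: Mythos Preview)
Your uniqueness argument is essentially the paper's: the operator $L$ is shown to be a strict contraction on $(\Fc_\alpha,\|\cdot\|_\alpha)$ for $\alpha\in(0,1-\theta)$, with Lipschitz constant $c(\alpha,\theta)=\Gamma(1-\alpha)\Gamma(\alpha+\theta)/\Gamma(\theta)<1$, and one concludes by Banach's fixed point theorem (Lemma~\ref{L:fixed-points}).

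For the derivation of \eqref{E:T_fixed_point}, however, there is a genuine gap. Your plan is to extract a single distinguished loop $\wp_0$ via Mecke, namely the loop crossing $\delta\partial\D$ that reaches deepest inward. But for $\wp_0$ to be \emph{the} deepest such loop you must simultaneously condition on the \emph{negative} event that no other loop of $\Lc_\D^\theta$ crossing $\delta\partial\D$ reaches further in than $\wp_0$ does. This conditioning couples the Palm loop to the rest of the soup and destroys exactly the independence you need between $\wp_0$ and the loops that continue the crossing; the ``asymptotic decoupling'' you invoke is precisely the hard part, and Mecke does not provide it. Moreover, the continuation of the cluster from the tip of $\wp_0$ down to $\delta^s\partial\D$ is not confined to a smaller disc, so the residual event is not a priori a rescaled copy of the original one.

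The paper avoids Mecke altogether by introducing an auxiliary parameter $u\in(0,1)$ and rescaling so that the loop soup lives in $\delta^{-1+u}\D$ and the outer target circle becomes $\delta^u\partial\D$. One then splits the soup into the loops that \emph{exit} the unit disc $\D$ and those that stay inside; by the restriction property these two Poisson collections are exactly independent. The random variable $t_*$ records the logarithmic depth reached by the \emph{entire collection} of exiting loops (not by a single distinguished one), and its law is computed from the loop measure (Lemma~\ref{L:loopmeasure_annulus_quantitative}). Conditionally on $t_*=t$, the remaining crossing from $(\delta^{s-1+u})^{1/t}$ to $\delta^{s-1+u}$ must be achieved by the independent soup in $\D$, and a short FKG ``welding'' argument (surrounding and bridging loops at the interface) shows this conditional probability equals $p_{(\delta^{s-1+u})^{1/t},\delta^{s-1+u}}$ up to a multiplicative $1+o(1)$. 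Passing $\delta\to 0$ gives an integral equation for each fixed $u$ (Lemma~\ref{L:crossing_equation_sub}); sending $u\to 0$ then yields \eqref{E:T_fixed_point} (Corollary~\ref{C:crossing}). The role of $u$ is exactly to replace your ill-posed single-loop selection at $\delta\partial\D$ by an \emph{exact} independence statement across the unit circle.
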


This integral equation is obtained by exploring the cluster from the outside and checking how deep inside a first layer of loops can go. The first term $1 - ( 1 - 1/s )^\theta$ corresponds to the case where the crossing of $\delta \D \setminus \delta^s \D$ has been made entirely by a single loop (which is unlikely when $s$ is large). The second term with the integral corresponds to the mixed case where the first layer of loops crosses exactly $\delta \D \setminus \delta^{1+(s-1)/t} \D$. The term $f_\infty(t)$ in the integral accounts for the remaining crossing of $\delta^{1+(s-1)/t} \D \setminus \delta^s \D$ by a cluster of loops staying in $\delta \D$.
This reasoning is made precise in Section \ref{S:crossing_integral}.

Theorem \ref{T:integral} provides a uniqueness result for the subsequential limits of the crossing probabilities appearing in Theorem \ref{T:intro_convergence_crossing}. These two theorems will therefore be proved at the same time.

\paragraph*{Identification of $f_\infty$ and connection to Bessel processes}

The Brownian loop soup can be also defined on the half line $(0,\infty)$ (or on more general intervals). In this one-dimensional setting, the total local time of the loop soup is well defined pointwise and finite a.s. For any intensity $\theta$, the local time profile has the law of a squared Bessel process $(R_t^{(\theta)})_{t \geq 0}$ of dimension $2\theta$, reflected at the origin, with $R_0^{(\theta)} = 0$ a.s. (the point 0 is the boundary of $(0,\infty)$). See \cite{Lupu18} for more details and precise statements. 
In dimension 1, the probability that a cluster of loops connects two points is simply equal to the probability that the local time stays positive on the interval joining these two points. The one-dimensional analogue of the function $f_\infty$ is therefore given by
\[
\lim_{\delta \to 0} \Prob{ \forall t \in [|\log \delta|, s |\log \delta|], R_t^{(\theta)} > 0} = \Prob{ \forall t \in [1,s], R_t^{(\theta)} >0 }, \quad \quad s \geq 1,
\]
by Brownian scaling. With Theorem \ref{T:integral} in hand, we can actually show that the function $f_\infty$ agrees with the above function:

\begin{theorem}\label{T:Bessel}
For all $s \geq 1$,
\begin{align*}
f_\infty(s) & = \P \big( \forall t \in [1,s], R_t^{(\theta)} >0 \big).
\end{align*}
In particular,
\begin{equation}
\label{E:f_infty_Bessel}
f_\infty(s)
= \frac{\sin(\pi \theta)}{\pi} \int_{s-1}^\infty t^{\theta-1} (t+1)^{-1} \d t,
\quad \quad s \geq 1.
\end{equation}
\end{theorem}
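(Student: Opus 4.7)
The plan is to leverage the uniqueness part of Theorem \ref{T:integral}: within the class $\Fc_\alpha$, $f_\infty$ is the only solution of the fixed-point equation \eqref{E:T_fixed_point}. It therefore suffices to exhibit the Bessel survival function
\[
g(s) := \P\big(\forall t \in [1,s],\ R_t^{(\theta)} > 0\big), \quad s \geq 1,
\]
as a member of $\Fc_\alpha$ (for some $\alpha < 1-\theta$) that satisfies \eqref{E:T_fixed_point}; both assertions of Theorem \ref{T:Bessel} will then follow at once.

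The first step is to compute $g$ in closed form using the generalised arcsine law for regenerative sets. For $\theta \in (0,1/2]$, the zero set of $R^{(\theta)}$ started from $0$ is the closure of the range of a stable subordinator of index $1-\theta$. Consequently, the last zero $\gamma_s := \sup\{u \le s : R_u^{(\theta)} = 0\}$ satisfies $\gamma_s/s \sim \mathrm{Beta}(1-\theta,\theta)$, i.e.\ admits the density $\tfrac{\sin(\pi\theta)}{\pi}\, u^{-\theta}(1-u)^{\theta-1}$ on $(0,1)$. Since the event defining $g$ is exactly $\{\gamma_s \le 1\}$, a direct calculation gives $g(s) = \tfrac{\sin(\pi\theta)}{\pi}\int_0^{1/s} u^{-\theta}(1-u)^{\theta-1}\,\d u$, and the substitution $u = 1/(t+1)$ converts this into the integral representation of \eqref{E:f_infty_Bessel}. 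This representation immediately yields $g(s) \sim \tfrac{\sin(\pi\theta)}{\pi(1-\theta)} s^{\theta-1}$ as $s\to\infty$, together with continuity, monotonicity and $g(1)=1$, so the check that $g \in \Fc_\alpha$ for any $\alpha \in (0, 1-\theta)$ should be routine.

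The main remaining work is to verify that the explicit formula just obtained actually solves the fixed-point equation \eqref{E:T_fixed_point}. My approach is to substitute the integral representation of $g$ into the right-hand side and swap the order of integration (Fubini is justified by positivity). After the change of variable $u = (s-1)/(s+t-1)$ in the outer integral, the integral term should collapse onto $\tfrac{\sin(\pi\theta)}{\pi}\int_0^{1-1/s} u^{-\theta}(1-u)^{\theta-1}\,\d u$; combined with the boundary contribution $1 - (1-1/s)^\theta = \theta \int_0^{1/s}(1-v)^{\theta-1}\,\d v$ and Euler's reflection formula $\Gamma(\theta)\Gamma(1-\theta) = \pi/\sin(\pi\theta)$, the right-hand side of \eqref{E:T_fixed_point} should then recombine into the incomplete Beta expression for $g(s)$.

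The main obstacle is this final bookkeeping identity: the prefactor $\theta(s-1)^\theta$ and the kernel $(s+t-1)^{-\theta-1}$ mesh with the incomplete Beta form of $g$ only after a couple of nontrivial substitutions, and one must track the boundary behaviour near $u = 1-1/s$ where the ``two pieces'' of the right-hand side join. An alternative route that sidesteps these special-function manipulations is to derive \eqref{E:T_fixed_point} directly for $g$ by running the exploration-from-the-outside argument of Section \ref{S:crossing_integral} on the 1D Brownian loop soup, using Lupu's identification \cite{Lupu18} of the occupation-time field of the 1D loop soup with the reflected squared Bessel process of dimension $2\theta$ (so that the 1D cluster connection event is exactly $\{R_t^{(\theta)}>0\}$ on an interval). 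This probabilistic route would transfer the 2D combinatorics to a one-dimensional setting where clusters are intervals, at the cost of reworking the geometric estimates in 1D.
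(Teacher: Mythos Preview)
Your overall strategy is exactly the paper's: derive the closed form for $g$ via the arcsine law, check $g\in\Fc_\alpha$ for $\alpha<1-\theta$, verify that $g$ satisfies \eqref{E:T_fixed_point}, and conclude by the uniqueness in Theorem~\ref{T:integral}. The first two steps match the paper (Lemma~\ref{L:Bessel_explicit}) essentially verbatim.

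Where you diverge is in the verification of \eqref{E:T_fixed_point}. Your primary route is a direct analytic check, but the specific intermediate claim is wrong: after your substitution $u=(s-1)/(s+t-1)$ the integral term does \emph{not} collapse onto $\tfrac{\sin(\pi\theta)}{\pi}\int_0^{1-1/s}u^{-\theta}(1-u)^{\theta-1}\,\d u$ (for $\theta=1/2$, $s=2$ the integral term equals $1/\sqrt{2}-1/2\approx 0.207$, not $1/2$). The analytic route can be salvaged, but it requires a different sequence: substitute the representation $g(t)=\tfrac{\sin(\pi\theta)}{\pi}\int_{t-1}^\infty r^{\theta-1}(r+1)^{-1}\,\d r$, Fubini, integrate in $t$ to get $\theta^{-1}(s^{-\theta}-(s+r)^{-\theta})$; the $s^{-\theta}$ piece yields $(1-1/s)^\theta$ via the full Beta integral and Euler's reflection, cancelling the boundary term, and one is left with the identity
\[
(s-1)^\theta\int_0^\infty \frac{r^{\theta-1}}{(r+1)(s+r)^{\theta}}\,\d r
=\int_0^{s-1}\frac{r^{\theta-1}}{r+1}\,\d r,
\]
which follows from the substitution $v=s/(s+r)$ followed by $u=1-v$.

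The paper avoids all of this by taking (a version of) your ``alternative route'': it verifies $Tf_{\rm Bes}=f_{\rm Bes}$ probabilistically via the additivity of squared Bessel processes, writing $(R^{(\theta)}_t)_{t\ge 1}\stackrel{d}{=}(R^{(\theta)}_{t-1}+R^{(0)}_{t-1})_{t\ge 1}$ with $R^{(0)}$ an independent $0$-dimensional squared Bessel started from $R^{(\theta)}_1$. Conditioning on $\tau=\inf\{t>0:R^{(0)}_t=0\}$ (the $0$-dimensional process is absorbed at $0$) and computing its density $\theta u^{\theta-1}(u+1)^{-\theta-1}\,\d u$ yields \eqref{E:T_fixed_point} directly, with the two terms on the right corresponding to $\{\tau\ge s-1\}$ and $\{\tau\in(0,s-1)\}$. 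This is precisely the 1D analogue of the exploration in Section~\ref{S:crossing_integral}, and is both shorter and more transparent than the special-function manipulation.
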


Theorem \ref{T:Bessel} in particular shows that $\xi_1 = 1-\theta$ concluding the proof of Theorem \ref{T:large_crossing}. Notice that $1-\theta$ is also the exponent describing the decay of $\P \big( \forall t \in [1,s], R_t^{(\theta)} >0 \big)$ as $s \to \infty$.

In several instances, there is a strong relationship between a 2D random conformally invariant object and its one-dimensional analogue. Particularly relevant to us, the circle average of the 2D Gaussian free field with fixed centre, seen as a function of the radius of the circle, has the law of a 1D Brownian motion (which is the GFF in dimension 1). See \cite[Section 2.8]{MR2322706} for a precise statement. Another important example concerns the local time of concentric circles of planar Brownian motion. As the radius of the circle becomes small, these local times become asymptotically like the local time time of 1D Brownian motion (which turns out to be a Bessel process). See \cite{jegoBMC, jegoCritical}.

We believe that such a relationship between the 1D and 2D local times of the Brownian loop soup should also exist. In a planar Brownian loop soup, the local time of a circle is infinite, so this relation is more subtle than in the case of a single Brownian trajectory.
The usual way to circumvent this issue is to define the local time via an approximation procedure where one subtracts a deterministic diverging quantity (Wick normalisation). However, this resulting local time is signed, i.e. takes negative values.
An alternative possibility is to restrict ourselves to loops that are not too small (specifically, when looking at the local time of a circle of radius $\eps$, we only keep loops with duration at least $\eps^{O(1)}$). As we show in this article (see Section \ref{S:intro_2values}), removing these small loops does not change the percolative properties of the loop soup when $\theta \in (0,1/2]$. We believe that with a cutoff of this type, the local time of concentric circles becomes asymptotically like the local time of a one-dimensional Brownian loop soup. Theorem \ref{T:Bessel} is a strong indication that such a relationship should hold. We now give some further heuristics in this direction.

\paragraph*{Relation between 1D and 2D loop soups}

The punctured unit disc $\D \setminus \{0\}$ and the half infinite cylinder $\cyl = (0,+\infty) \times \mathbb{S}^1$ are conformally equivalent via the map $r e^{i\theta} \mapsto (|\log r|, e^{i \theta})$. Because the Brownian loop soup is conformally invariant (up to time reparametrization, see \cite[Proposition 5.27]{LawlerConformallyInvariantProcesses}) and because $\{0\}$ is polar for Brownian motion, we may work on $\cyl$ instead of $\D$.
The loop measure in $\cyl$ is then given by
\[
\loopmeasure_\cyl(\d \wp) = \int_0^\infty \d t \int_{\cyl} \d z~ 
\dfrac{p_{\cyl}(t,z,z)}{t} \P_\cyl^{t,z,z}(\d \wp)
\]
where $p_{\cyl}$ and $\P_\cyl^{t,z,z}$ denote the heat kernel in $\cyl$ and the Brownian bridge probability measure of duration $t$ from $z$ to $z$ in $\cyl$.
We will denote by $\boldsymbol{\pi} : (x,e^{i\theta}) \in \cyl \mapsto x \in (0,+\infty)$ the longitudinal projection that induces a projection on paths in $\cyl$ onto paths in $(0,+\infty)$. 
In the cylinder, the two components of a Brownian bridge of duration $t$ from $(x, e^{i\theta})$ to $(x, e^{i\theta})$ are simply two independent Brownian bridges of duration $t$: one from $x$ to $x$ in $\R^+$ and the other one from $e^{i\theta}$ to $e^{i\theta}$ in $\mathbb{S}^1$. Similarly, the heat kernel $p_\cyl$ factorises: for $z = (x, e^{i\theta})$, $p_{\cyl}(t,z,z) = p_{\mathbb{S}^1}(t,e^{i\theta},e^{i\theta}) p_{\R^+}(t,x,x)$.
The push forward $\boldsymbol{\pi}_* \loopmeasure_\cyl$ of $\loopmeasure_\cyl$ by $\boldsymbol{\pi}$ is therefore equal to
\begin{equation}
\label{E:1D_2D}
\boldsymbol{\pi}_* \loopmeasure_\cyl(\d \wp^{\rm 1D}) = \int_0^\infty \d t \int_0^\infty \d x \left\{ 2\pi p_{\mathbb{S}^1}(t,1,1) \right\} \frac{p_{\R^+}(t,x,x)}{t} \P_{\R^+}^{t,x,x}( \d \wp^{\rm 1D} ),
\end{equation}
with the factor $2\pi$ coming from the integration over $\mathbb{S}^1$.
The upshot is that, since $2 \pi p_{\mathbb{S}^1}(t,1,1) \to 1$ as $t \to \infty$, if we restrict ourselves to long loops, then we approximately recover the Brownian loop measure in $(0,+\infty)$:
\[
\loopmeasure_{\R^+}(\d \wp^{\rm 1D}) = \int_0^\infty \d t \int_0^\infty \d x~ \frac{p_{\R^+}(t,x,x)}{t} \P_{\R^+}^{t,x,x}(\d \wp^{\rm 1D}).
\]
In particular, heuristically, the long crossings on $\cyl$, 
as in Theorems \ref{T:large_crossing} and \ref{T:intro_convergence_crossing},
are achieved by combining a small number of large loops rather than a large number of small loops. 
So essentially, we are in a regime where the 2D Brownian loop soup on $\cyl$
is well approximated by the 1D Brownian loop soup on $(0,+\infty)$.
Moreover, if two large Brownian loops $\wp_1$ and $\wp_2$ on $\cyl$
are such that their 1D projections under $\boldsymbol{\pi}$ intersect, 
then with high probability $\wp_{1}$ and $\wp_{2}$ themselves do intersect, 
the winding around the cylinder creating many opportunities for that.

\paragraph*{Random coverings of $\R^+$ and regenerative sets}

By forgetting the non trivial local time that the 1D Brownian loops carry, we can view the 1D Brownian loop soup as a random collection of intervals.
For any one-dimensional loop $\wp$, let $\boldsymbol{\tilde{\pi}}(\wp) = (\min \wp, \max \wp)$. By rooting the loops at their minimum (in a similar manner as in \eqref{E:decomposition_loopmeasure}; see \cite[Proposition 3.18]{Lupu18}), one can show that the push forward $\boldsymbol{\tilde{\pi}}_* \loopmeasure_{\R^+}$ of $\loopmeasure_{\R^+}$ by $\boldsymbol{\tilde{\pi}}$ is equal to
\[
\boldsymbol{\tilde{\pi}}_* \loopmeasure_{\R^+} (\d a, \d b) = (b-a)^{-2} \d a ~\d b \indic{0<a<b}.
\]
In other words, one can sample a Poisson point process $\{ (a_i,b_i), i \geq 1 \}$ on $(\R^+)^2$ with intensity measure $\theta (b-a)^{-2} \d a ~\d b \indic{0<a<b}$ and look at the random set $\bigcup_{i \geq 1} [a_i, b_i]$ which has the same law as the points visited by a loop in $\Lc_{\R^+}^\theta$.

This type of random coverings of $\R^+$ has been greatly studied; see e.g. \cite[Chapter 7]{Bertoin1999} for an introduction to this topic. One could consider intensity measures with more general forms $\d a ~ \d \nu(b-a)$ where $\nu$ is a measure on $\R^+$. The set of uncovered points of $\R^+$ is then a regenerative set and one can characterise whether this set is trivial or not in terms of $\nu$. In the concrete example $\nu(\d x) = \theta x^{-2} \d x$ that is of interest to us, it is known that the uncovered set is $\{0 \}$ if and only if $\theta \geq 1$ (which is consistent with the Bessel process approach). Let us however stress once more that the connectivity properties on the cylinder $\cyl$ cannot be immediately deduced from those in $\R^+$ (they are for instance drastically different if $\theta \in (1/2,1)$).


Theorem \ref{T:intro_convergence_crossing} can also be interpreted through the lens of regenerative sets. To this end, let us denote by
\[
g_{\delta,s} := \inf \{ g \in (0,s): \delta^g \overset{\Lc_\D^\theta}{\longleftrightarrow} \delta^s \}, \quad \delta \in (0,1), \quad s > 1.
\]
Performing a change of variable in the integral in \eqref{E:f_infty_Bessel}, Theorem \ref{T:intro_convergence_crossing} can be rephrased as saying that $g_{\delta,s} / s$ converges in distribution as $\delta \to 0$ to the generalised arcsine law with parameter $1-\theta$:
\begin{equation}
\label{E:arcsine_law}
\frac{\sin(\pi \theta)}{\pi} t^{-\theta} (1-t)^{\theta-1} \indic{0<t<1} \d t.
\end{equation}
This is reminiscent of the convergence of the last passage times in regenerative sets towards generalised arcsine laws. See \cite{Bertoin1999} for an introduction to this theory, especially Theorem 3.2 therein.

\subsection{\texorpdfstring{Two critical values: $\theta = 1/2$ and $\theta = 1$}{Two critical values: theta = 1/2 and theta = 1}}\label{S:intro_2values}
We finish this introduction by explaining why the exponent $1-\theta$ vanishes at $\theta = 1$ instead of vanishing at the critical point $\theta = 1/2$.
As we will see, this due to the existence of two different critical values of $\theta$ for the Brownian loop soup on the cylinder $\cyl$.
By conformal invariance of the Brownian loop soup, claims about infinite clusters in $\cyl$ can be translated to claims concerning clusters targeting a given point of a domain.

The fact that $\theta = 1$ is a critical value boils down to the following dichotomy (see Lemma \ref{L:fixed-points} and Figure \ref{fig1}):
\begin{itemize}
\item
$\theta \in (0,1)$: the functions $s \mapsto 1$ and $s \mapsto \P ( \forall t \in [1,s], R_t^{(\theta)} >0 )$ are two distinct solutions to the integral equation \eqref{E:T_fixed_point};
\item
$\theta>1$: the function $s \mapsto 1$ is the unique measurable function in $[0,1]^{[1,\infty)}$ solution to \eqref{E:T_fixed_point}.
\end{itemize}

For $\theta >0$ and $\tau >0$, let $\Lc_{\cyl,\tau}^\theta$ be the subset $\Lc_\cyl^\theta$ consisting of all loops of duration at least $\tau$.
Recall that $\tau \in (0,\infty) \mapsto 2\pi p_{\mathbb{S}^1}(\tau,1,1)$ is decreasing and converges to 1 as $\tau \to \infty$.
For $\theta \in (1/2,1)$, let
\begin{equation}
\label{E:t_theta}
\tau_\theta := \inf \{ \tau>0: 2\pi p_{\mathbb{S}^1}(\tau,1,1) \leq 1 / \theta \}.
\end{equation}

\begin{theorem}\label{T:perco_large_loops}
Let $\theta \in (0,1)$ and let $\tau \geq 0$. Assume that $\tau > \tau_\theta$ if $\theta \in (1/2,1)$.
For all $s > 1$,
\begin{equation}
\label{E:T_perco_large1}
\Prob{ \{T\} \times \mathbb{S}^1 \overset{\Lc_{\cyl,\tau}^\theta}{\longleftrightarrow} \{ s T \} \times \mathbb{S}^1 } \to \frac{\sin(\pi \theta)}{\pi} \int_{s-1}^\infty t^{\theta-1} (t+1)^{-1} \d t
\quad \quad \text{as} \quad T \to \infty.
\end{equation}
In particular, there is almost surely no infinite cluster in $\Lc_{\cyl,\tau}^\theta$.

Let $\theta \geq 1$. For any $\tau \geq 0$ and $s>1$,
\begin{equation}
\label{E:T_perco_large2}
\Prob{ \{T\} \times \mathbb{S}^1 \overset{\Lc_{\cyl,\tau}^\theta}{\longleftrightarrow} \{ s T \} \times \mathbb{S}^1 } \to 1
\quad \quad \text{as} \quad T \to \infty.
\end{equation}
\end{theorem}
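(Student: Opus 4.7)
The plan is to apply Theorem \ref{T:integral} together with Lemma \ref{L:fixed-points} to the crossing probabilities $g_{T,\tau}(s) := \P\bigl( \{T\} \times \mathbb{S}^1 \overset{\Lc_{\cyl,\tau}^\theta}{\longleftrightarrow} \{sT\} \times \mathbb{S}^1 \bigr)$ as $T \to \infty$. The first step is to show that every subsequential limit $f$ of $g_{T,\tau}$ solves the fixed-point equation \eqref{E:T_fixed_point}. This is done by repeating the exploration-from-outside argument behind Theorem \ref{T:integral}: peel off the outermost layer of loops of $\Lc_{\cyl,\tau}^\theta$ meeting $\{T\} \times \mathbb{S}^1$, compute the probability that this layer reaches $\{aT\} \times \mathbb{S}^1$ for $a \in (1,s)$, and combine with the conditional crossing probability past it via the Markov property. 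Any single loop spanning a longitudinal distance of order $T$ has Brownian duration at least of order $T^2$, so the cutoff $\tau$ is invisible to this first-layer decomposition in the limit $T \to \infty$ and \eqref{E:T_fixed_point} carries over.

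For $\theta \geq 1$, Lemma \ref{L:fixed-points} states that the only $[0,1]$-valued measurable solution of \eqref{E:T_fixed_point} is the constant $1$, hence $g_{T,\tau}(s) \to 1$ along every subsequence, yielding \eqref{E:T_perco_large2}. For $\theta \in (0,1)$ both $1$ and $f_\infty$ solve \eqref{E:T_fixed_point}, so one must instead appeal to the finer uniqueness provided by Theorem \ref{T:integral} in the class $\Fc_\alpha$ for some $\alpha < 1-\theta$; this requires an a priori polynomial upper bound $\limsup_{T\to\infty} g_{T,\tau}(s) \leq C s^{-\alpha}$. When $\theta \in (0,1/2]$ this bound is immediate from $g_{T,\tau}(s) \leq \P(\{T\} \overset{\Lc_\cyl^\theta}{\longleftrightarrow} \{sT\})$ and Theorem \ref{T:intro_convergence_crossing}.

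The delicate case is $\theta \in (1/2,1)$, where the hypothesis $\tau > \tau_\theta$ is essential because the full loop soup is already percolating. Here I would exploit the 1D-2D correspondence from Section \ref{S:intro_outline}: any 2D cluster of $\Lc_{\cyl,\tau}^\theta$ crossing $\{T\}$ to $\{sT\}$ projects under $\boldsymbol{\pi}$ to a 1D collection of intervals whose union covers $[T,sT]$. The Radon--Nikodym density of $\boldsymbol{\pi}_*(\loopmeasure_\cyl|_{t \geq \tau})$ with respect to $\loopmeasure_{\R^+}|_{t \geq \tau}$ equals $2\pi p_{\mathbb{S}^1}(t,1,1)$, which by monotonicity of the heat kernel on the circle and the definition \eqref{E:t_theta} of $\tau_\theta$ is uniformly bounded by $2\pi p_{\mathbb{S}^1}(\tau,1,1) < 1/\theta$. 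Consequently $g_{T,\tau}(s)$ is bounded by the corresponding 1D crossing probability at intensity $\theta' := \theta \cdot 2\pi p_{\mathbb{S}^1}(\tau,1,1) < 1$, which after Brownian scaling equals $\P\bigl( \forall t \in [1,s], R_t^{(\theta')} > 0 \bigr)$ and decays like $s^{-(1-\theta')}$. Choosing $\alpha \in (0, 1 - \max(\theta, \theta'))$ places the subsequential limits in $\Fc_\alpha$ and forces them to coincide with $f_\infty$.

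The absence of an infinite cluster follows from \eqref{E:T_perco_large1} combined with the monotonicity $\{T_0 \longleftrightarrow R\} \subseteq \{T_1 \longleftrightarrow R\}$ for $T_0 \leq T_1 \leq R$ (the closure of any cluster is connected in $\cyl$, so it must intersect every intermediate level). Given $\varepsilon > 0$, choose $T_1$ and $s$ large so that $g_{T_1,\tau}(s) \leq \varepsilon$; then $\P(\{T_0\} \overset{\Lc_{\cyl,\tau}^\theta}{\longleftrightarrow} \{sT_1\}) \leq \varepsilon$ uniformly in $T_0 \leq T_1$, and letting $sT_1 \to \infty$ followed by $\varepsilon \to 0$ rules out unbounded clusters meeting any $\{T_0\} \times \mathbb{S}^1$; a countable union over $T_0$ completes the argument. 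The main obstacle is thus the polynomial a priori bound in the regime $\theta \in (1/2,1)$, which genuinely exploits $\tau > \tau_\theta$ via a careful 1D-2D comparison.
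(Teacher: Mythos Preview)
Your approach mirrors the paper's: show tightness, derive the integral equation \eqref{E:T_fixed_point} for subsequential limits, and then invoke Lemma \ref{L:fixed-points} after securing an a priori decay bound via the 1D comparison when $\theta \in (1/2,1)$. The 1D argument you give, with $\theta' = 2\pi\theta\, p_{\mathbb{S}^1}(\tau,1,1) < 1$, is exactly what the paper does (the paper calls this quantity $\lambda$). Note incidentally that $2\pi p_{\mathbb{S}^1}(\tau,1,1) > 1$ for every finite $\tau$, so in fact $\theta' > \theta$ always and your $\max(\theta,\theta')$ is just $\theta'$.

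There is, however, a genuine gap at $\theta = 1$. You write that ``For $\theta \geq 1$, Lemma \ref{L:fixed-points} states that the only $[0,1]$-valued measurable solution of \eqref{E:T_fixed_point} is the constant $1$'', but Lemma \ref{L:fixed-points} asserts this only for $\theta > 1$. At $\theta = 1$ the Lipschitz constant is $c(\alpha,1) = \pi\alpha/\sin(\pi\alpha) \geq 1$ for every admissible $\alpha$, so the contraction argument fails and uniqueness of the fixed point in $[0,1]^{[1,\infty)}$ is not available. The paper handles $\theta = 1$ by a separate monotonicity argument: for any $\theta' \in (1/2,1)$ and $\tau' > \tau \vee \tau_{\theta'}$ one has
\[
\liminf_{T\to\infty} \Prob{ \{T\}\times\mathbb{S}^1 \overset{\Lc_{\cyl,\tau}^{1}}{\longleftrightarrow} \{sT\}\times\mathbb{S}^1 } \;\geq\; \lim_{T\to\infty} \Prob{ \{T\}\times\mathbb{S}^1 \overset{\Lc_{\cyl,\tau'}^{\theta'}}{\longleftrightarrow} \{sT\}\times\mathbb{S}^1 } = f_\infty^{(\theta')}(s),
\]
and then lets $\theta' \to 1^-$, using that the right-hand side of \eqref{E:T_perco_large1} tends to $1$. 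You need to add this step (or an equivalent one) to close the argument at the critical intensity.
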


To summarise,

\begin{enumerate}
\item
$\theta \in (0,1/2]$. It is known that all the clusters are bounded. 
In Theorem \ref{T:large_crossing}, we show that the crossing exponent is given by $1-\theta$.
Furthermore, if one removes the small loops (as described above), the value of the exponent is unchanged (Theorem \ref{T:perco_large_loops}). This is reminiscent of \cite{Lupu19ConvCLE}.
\item
$\theta \in (1/2,1)$. 
It is known that the loops form a unique cluster,
which is thus unbounded.
However, we show in Theorem \ref{T:perco_large_loops} that if one removes 
the small loops (at a sufficiently high threshold), 
each cluster formed by the remaining loops is bounded and the crossing exponent is again given by $1-\theta$.
\item
$\theta > 1$. Again, it is known that the loops form a unique cluster.
We further conjecture that if one removes the small loops, no matter the threshold, the remaining loops will still form one unbounded cluster, alongside infinitely many bounded clusters. Theorem \ref{T:perco_large_loops} is a partial result in this direction. 
\end{enumerate}

We do not state a conjecture for $\theta = 1$ (although Theorem \ref{T:perco_large_loops} also applies to this case).
The one-dimensional loop soup does percolate at this intensity since a two-dimensional Bessel process does not vanish. However, such a Bessel process goes very close to zero so it could be the case that the large loops in $\Lc_\cyl^{\theta=1}$ does not possess any infinite cluster but still percolates longitudinally. The situation is unclear to us.

So on the half-infinite cylinder, $\theta=1/2$ is the threshold for percolation of small loops, 
and $\theta=1$ is the threshold for percolation of large loops. Nevertheless, the appearance of the second critical point $\theta=1$ depends strongly on the domain,
since the size of the loops is not conformally invariant.
For instance on the half-plane, the two thresholds are the same and equal 
$\theta=1/2$, as can be deduced from a percolation by blocks construction similar to \cite{Lupu2014LoopsHalfPlane}.

Finally, let us mention that the intensity $\theta = 1$ was also known to be special due to its connection to the uniform spanning tree (such as the Wilson's algorithm \cite{Wilson1996GeneratingRS,PROPP1998170}). This is closely related to the spatial Markov property of the loop soup at $\theta=1$ when the loops  are viewed as oriented loops, as pointed out in \cite{Werner2016}.

\paragraph*{Acknowledgements}

The authors thank Juhan Aru, Nicolas Curien, Tom Hutchcroft, Pierre Nolin, Avelio Sepúlveda, Xin Sun and Zijie Zhuang
for discussions related to this work.
This material is based upon work supported by the National Science Foundation under Grant No. DMS-1928930 while AJ and WQ participated in a program hosted by the Mathematical Sciences Research Institute in Berkeley, California, during the Spring 2022 semester.
AJ is supported by Eccellenza grant 194648 of the Swiss National Science Foundation and is a member of NCCR SwissMAP. WQ was in CNRS and Laboratoire de Math\'ematiques d’Orsay, Universit\'e Paris-Saclay, when this work was initiated.

\section{Preliminaries}

Let $D \subset \C$ be an open set whose boundary is a finite collection of analytic curves. Let $x \in D$ and run a standard Brownian motion starting from $x$ until it reaches the boundary of $D$. Denote by $B_{\tau}$ the exit point. The law of $B_\tau$ has a density $H_D(x, \cdot)$, called Poisson kernel, with respect to the one-dimensional Hausdorff measure on $\partial D$.
The boundary Poisson kernel is defined by
\begin{equation}
\label{E:Poisson_boundary}
H_D(w,z) = \lim_{\eps \to 0} \frac{1}{\eps} H_D(w + \eps n_w, z), \quad \quad w,z \in \partial D,
\end{equation}
where $n_w$ denotes the inward normal at $w$.
When $D$ is the unit disc, $H_D$ has the following explicit expression:
\begin{equation}\label{E:Poisson_kernel_disc}
H_{\D}(x,z)
= \left\{ \begin{array}{cc}
\frac{1}{2\pi} \frac{1 - |x|^2}{|x - z|^2}, & x \in \D, z \in \partial \D, \\
\frac{1}{\pi} \frac{1}{|x - z|^2}, & x \in \partial \D, z \in \partial \D.
\end{array} \right.
\end{equation}

\subsection{FKG inequality in the loop soup}

We now recall the FKG inequality which is an import tool that we will use frequently in the paper.
A function $f$ is said to be increasing if for all $\Lc, \Lc'$ with $\Lc \subset \Lc'$, $f(\Lc) \leq f(\Lc')$.

\begin{lemma}[FKG inequality {\cite[Lemma 2.1]{Janson84}}]\label{L:FKG}
For all increasing bounded functions $f, g$,
\[
\Expect{ f(\Lc_D^\theta) g(\Lc_D^\theta) } \geq \Expect{ f(\Lc_D^\theta) } \Expect{ g(\Lc_D^\theta) }.
\]
\end{lemma}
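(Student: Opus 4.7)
The plan is to reduce the statement to the classical Harris--FKG inequality for independent real-valued random variables, by first truncating the loop soup to a Poisson point process with finite intensity, and then discretising the loop space by a finite partition. This is the standard approach for FKG on Poisson point processes.

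\textbf{Reduction to finite intensity via conditional expectations.} For $\eps>0$, let $\Lc^\eps \subset \Lc_D^\theta$ be the subcollection of loops of time duration at least $\eps$. Because $\loopmeasure_D$ restricted to such loops has finite total mass, $\Lc^\eps$ is a Poisson point process with finitely many loops almost surely. Moreover, $\Lc_D^\theta = \Lc^\eps \sqcup (\Lc_D^\theta \setminus \Lc^\eps)$ is a decomposition into two independent Poisson point processes. Writing $\mathcal{F}^\eps := \sigma(\Lc^\eps)$ and using this independence, the conditional expectation $f^\eps := \E[f(\Lc_D^\theta) \mid \mathcal{F}^\eps]$ is given by $f^\eps(L) = \E[f(L \sqcup (\Lc_D^\theta \setminus \Lc^\eps))]$, which is a bounded increasing function of $L$ because $f$ is increasing. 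Define $g^\eps$ analogously. By the tower property $\E[f^\eps(\Lc^\eps)] = \E[f(\Lc_D^\theta)]$, and since $\mathcal{F}^\eps \uparrow \sigma(\Lc_D^\theta)$ as $\eps \downarrow 0$, the martingale convergence theorem together with Cauchy--Schwarz gives $\E[f^\eps(\Lc^\eps) g^\eps(\Lc^\eps)] \to \E[f(\Lc_D^\theta) g(\Lc_D^\theta)]$. It therefore suffices to prove the FKG inequality for $\Lc^\eps$ and bounded increasing functions $f^\eps, g^\eps$.

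\textbf{Discretisation and Harris--FKG.} Fix $\eps>0$ and a finite measurable partition $\{A_1,\ldots,A_k\}$ of the space of loops of duration at least $\eps$. The counts $N_i := |\Lc^\eps \cap A_i|$ are independent Poisson random variables, and conditionally on $(N_i)$ the loops inside $A_i$ are i.i.d.\ with law $\loopmeasure_D(\cdot \cap A_i)/\loopmeasure_D(A_i)$. Setting $F_{\mathcal{P}} := \E[f^\eps(\Lc^\eps) \mid (N_i)_{i \leq k}]$ and likewise $G_{\mathcal{P}}$, a coupling argument---realising the configuration with $N_i = n+1$ as the one with $N_i = n$ augmented by one independent loop sampled from $\loopmeasure_D(\cdot \cap A_i)/\loopmeasure_D(A_i)$---together with the monotonicity of $f^\eps$ shows that $F_{\mathcal{P}}$ and $G_{\mathcal{P}}$ are increasing functions of the independent vector $(N_1,\ldots,N_k)$. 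The Harris--FKG inequality for independent real-valued random variables (proved by the standard one-variable identity $2(\E[fg] - \E[f]\E[g]) = \E[(f(X)-f(Y))(g(X)-g(Y))] \geq 0$ for an independent copy $Y$ of $X$, combined with induction on the number of coordinates) then yields $\E[F_{\mathcal{P}} G_{\mathcal{P}}] \geq \E[F_{\mathcal{P}}]\E[G_{\mathcal{P}}]$. Refining the partition along a sequence whose generated $\sigma$-algebras exhaust $\mathcal{F}^\eps$, $(F_{\mathcal{P}}, G_{\mathcal{P}})$ converge in $L^2$ to $(f^\eps(\Lc^\eps), g^\eps(\Lc^\eps))$ by martingale convergence, and combining with the first step concludes the proof. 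The main technical point is the monotonicity of $F_{\mathcal{P}}$ in each $N_i$, which relies on the product structure of a Poisson point process conditional on its cell counts.
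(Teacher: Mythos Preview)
Your proof is correct and follows the standard route (truncation to finite intensity, discretisation by a separating sequence of partitions, Harris--FKG on the independent cell counts, martingale convergence to pass to the limit). The paper itself does not give a proof of this lemma at all; it simply quotes the result from \cite[Lemma 2.1]{Janson84}. So there is nothing to compare against beyond noting that your argument is essentially the classical one underlying that reference.
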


In most places we will apply this result to indicator functions of increasing events.

\subsection{Crossing an annulus with loops}

The aim of this section is to prove the following estimates on the loop measure of some crossing event.

\begin{lemma}\label{L:loopmeasure_annulus_quantitative}
Let $0<r_1<r_2<1$. We have
\begin{equation}
\label{E:L_loopmeasure_annulus_quantitative1}
\loopmeasure_\D ( \{ \wp \mathrm{~crossing~} r_2 \D \setminus r_1 \D \} ) = \left( 1 + O(1)\frac{r_1}{r_2} \right) \log \frac{\log (1/r_1)}{\log (r_2/r_1)}.
\end{equation}
Moreover, the left hand side of \eqref{E:L_loopmeasure_annulus_quantitative1} is differentiable with respect to $r_1$ and
\begin{equation}
\label{E:L_loopmeasure_annulus_quantitative2}
\frac{\d}{\d r_1} \loopmeasure_\D ( \{ \wp \mathrm{~crossing~} r_2 \D \setminus r_1 \D \} ) = \Big( 1 + \frac{O(1)}{\log(r_2/r_1)} \Big) \frac{1}{r_1} \Big( \frac{1}{\log(r_2/r_1)} - \frac{1}{\log(1/r_1)} \Big).
\end{equation}
In \eqref{E:L_loopmeasure_annulus_quantitative1} and \eqref{E:L_loopmeasure_annulus_quantitative2}, $O(1)$ refers to a quantity uniformly bounded with respect to $r_1$ and $r_2$.
\end{lemma}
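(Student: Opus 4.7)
The plan is to use conformal invariance to reduce the problem to an explicit computation on the half-cylinder, following the discussion around \eqref{E:1D_2D}. Under $\Psi: z \mapsto -\log z$, the punctured disc $\D \setminus \{0\}$ is sent conformally onto $\cyl = (0,\infty) \times \mathbb{S}^1$, carrying the circles $r_i \partial \D$ to the cross-sections $\{L_i\} \times \mathbb{S}^1$ with $L_i := \log(1/r_i)$, and the crossing event becomes the event that the longitudinal projection $\boldsymbol{\pi}(\wp)$ hits both $L_1$ and $L_2$. Since $\{0\}$ is polar for planar Brownian motion and $\loopmeasure$ is conformally invariant, and using the heat kernel factorization $p_\cyl(t,z,z) = p_{\R^+}(t,x,x)\,p_{\mathbb{S}^1}(t,1,1)$ together with the product structure of the Brownian bridge in $\cyl$, one obtains
\[
\loopmeasure_\D(\{\wp \text{ crosses } r_2\D\setminus r_1\D\}) \;=\; \int_0^\infty 2\pi p_{\mathbb{S}^1}(t,1,1)\, I(t)\, dt,
\]
where $I(t)\,dt$ denotes the duration density of $\loopmeasure_{\R^+}$ restricted to one-dimensional loops whose range contains $[L_2, L_1]$.

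The leading term comes from the one-dimensional loop measure alone. Using the explicit projection formula $\boldsymbol{\tilde{\pi}}_*\loopmeasure_{\R^+}(da,db) = (b-a)^{-2}\,da\,db\,\mathbf{1}(0<a<b)$ recalled around~\eqref{E:arcsine_law},
\[
\int_0^\infty I(t)\,dt \;=\; \int_0^{L_2}\!\!\int_{L_1}^\infty (b-a)^{-2}\,db\,da \;=\; \log\frac{L_1}{L_1-L_2},
\]
which is exactly the main term of~\eqref{E:L_loopmeasure_annulus_quantitative1}. To handle the remainder $\int(2\pi p_{\mathbb{S}^1}(t,1,1) - 1)\,I(t)\,dt$, I would expand $2\pi p_{\mathbb{S}^1}(t,1,1) - 1 = 2\sum_{n\geq 1} e^{-n^2 t/2}$ via the Fourier representation, turning each summand into twice the crossing measure of the one-dimensional \emph{massive} loop measure of mass $n$. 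Rooting at the minimum and using the killed-Brownian-motion identity $\P_\epsilon(\tau_h < \tau_0,\,\text{survives}) = \sinh(n\epsilon)/\sinh(nh)$ combined with $\E_h[e^{-n^2 \tau_0/2}] = e^{-nh}$ gives the massive excursion tail $\nu^{(n)}_0(\max>h) = n\,e^{-nh}/\sinh(nh)$; integrating the resulting density $n^2/\sinh^2(n(b-a))$ over $(a,b)\in(0,L_2)\times(L_1,\infty)$ telescopes to
\[
\log \frac{1 - e^{-2nL_1}}{1 - e^{-2n(L_1-L_2)}}.
\]
In the relevant regime $\log(r_2/r_1) = L_1-L_2$ not too small, the $n=1$ term dominates and is of size $e^{-2(L_1-L_2)}(1-r_2^2) = O((r_1/r_2)^2)$; the remaining terms form a geometric series in $(r_1/r_2)^{2n}$, and dividing by the main term yields a multiplicative correction bounded by $O(r_1/r_2)$, as claimed in~\eqref{E:L_loopmeasure_annulus_quantitative1}.

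The differentiability statement and the derivative formula~\eqref{E:L_loopmeasure_annulus_quantitative2} follow from these closed-form expressions, which are smooth in $L_1$. Using $d/dr_1 = -(1/r_1)\,d/dL_1$, differentiation of the main term yields
\[
\frac{d}{dr_1}\log\frac{L_1}{L_1-L_2} \;=\; \frac{1}{r_1}\Bigl(\frac{1}{\log(r_2/r_1)} - \frac{1}{\log(1/r_1)}\Bigr),
\]
and the relative correction $O(1)/\log(r_2/r_1)$ comes from differentiating the closed-form error expression above and bounding the resulting sums term by term. The principal technical difficulty is the quantitative control of the remainder in the second paragraph: to obtain the correct exponent $2(L_1-L_2)$ in the $n=1$ massive term (as opposed to $L_1-L_2$, which would only yield an $O(\sqrt{r_1/r_2})$ correction) one must combine both the hitting probability $\sinh(n\epsilon)/\sinh(nh)$ and the Laplace transform $e^{-nh}$ of the return time in the killed-Brownian computation; their product produces the extra exponential decay, which after the telescoping computation above gives the clean closed-form formula on which the error bound rests.
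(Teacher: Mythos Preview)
Your approach is genuinely different from the paper's and in several respects cleaner. The paper roots each loop at its point of maximal modulus, writes the crossing mass as an integral of a disc Poisson kernel against an annulus boundary Poisson kernel, approximates the disc kernel by $1/(2\pi r)$, and integrates; for the derivative~\eqref{E:L_loopmeasure_annulus_quantitative2} it then invokes a separate technical lemma (Lemma~\ref{L:Poisson_kernel}) about the $q$-derivative of the annulus boundary Poisson kernel, proved via a somewhat involved series computation. Your cylinder route, combined with the Fourier expansion $2\pi p_{\mathbb{S}^1}(t,1,1)=\sum_{n\in\Z}e^{-n^2t/2}$, bypasses all of this and yields the exact identity
\[
\loopmeasure_\D\bigl(\wp \text{ crosses } r_2\D\setminus r_1\D\bigr) \;=\; \log\frac{L_1}{L_1-L_2}\;+\;2\sum_{n\ge 1}\log\frac{1-r_1^{2n}}{1-(r_1/r_2)^{2n}},
\]
from which both parts of the lemma follow by termwise differentiation of a smooth closed form, with no annulus Poisson-kernel lemma needed. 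The massive-excursion computation giving $\nu_0^{(n)}(\max>h)=ne^{-nh}/\sinh(nh)$, hence density $n^2/\sinh^2(n(b-a))$, is correct, and the double integral does telescope to the displayed logarithm.

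One step is underjustified: you pass from ``the error is $O((r_1/r_2)^2)$'' to ``the relative correction is $O(r_1/r_2)$'', but the main term $\log(L_1/(L_1-L_2))$ can be small independently of $r_1/r_2$ (take $r_2\to 1$ with $r_1/r_2$ fixed), so this passage needs the finer input that the error then carries an extra factor $1-r_2^{2}=O(L_2)$ matching the vanishing of the main term. More strikingly, your own exact formula shows that the \emph{uniform}-$O(1)$ statement actually fails when $L_1-L_2\to 0$: with $L_1=2(L_1-L_2)$ the $n$-th error term is $\log(1+e^{-2n(L_1-L_2)})$, and summing gives $\sim\pi^2/\bigl(12(L_1-L_2)\bigr)\to\infty$ while the main term stays equal to $\log 2$. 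The paper's proof has the same non-uniformity --- its pointwise estimate~\eqref{E:proof_Poisson_disc1} breaks down when $r_1/r$ is near $1$ --- and every application of the lemma elsewhere in the paper has $r_1/r_2\to 0$, where your bound is immediate; still, you should replace ``the relevant regime'' by an explicit hypothesis such as $r_1/r_2$ bounded away from $1$.
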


We will see in \eqref{E:loopmeasure_annulus} below that the left hand side of \eqref{E:L_loopmeasure_annulus_quantitative1} can be expressed in terms of a Poisson kernel in a disc and a boundary Poisson kernel in an annulus. Contrary to the case of a disc (see \eqref{E:Poisson_kernel_disc}), the Poisson kernel in an annulus does not have a simple explicit expression. Before proving Lemma \ref{L:loopmeasure_annulus_quantitative}, we first state a result concerning this latter Poisson kernel. This result will be useful in the proof of \eqref{E:L_loopmeasure_annulus_quantitative2}.

\begin{lemma}\label{L:Poisson_kernel}
Let $q \in (0,1)$. For all $\theta \in [0,2 \pi]$, $H_{\D \setminus q \D}(qe^{i\theta},1)$ is differentiable with respect to $q$ and
\begin{equation}
\frac{\d}{\d q} \left( q H_{\D \setminus q \D}(qe^{i\theta},1) \right) = \left( 1 + \frac{O(1)}{\log q} \right) \frac{1}{|\log q|} H_{\D \setminus q \D}(qe^{i\theta},1).
\end{equation}
\end{lemma}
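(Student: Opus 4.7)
The plan is to reduce to a finite cylinder via conformal invariance, where the boundary Poisson kernel admits a clean Fourier expansion. The map $\psi : z = re^{i\phi} \mapsto (-\log r, e^{i\phi})$ is a conformal equivalence between $\D \setminus q\D$ and $S_L := (0, L) \times \mathbb{S}^1$, with $L := |\log q|$. Since $|\psi'(z)| = 1/|z|$, the conformal transformation rule for boundary Poisson kernels yields
\[
q \, H_{\D \setminus q\D}(qe^{i\theta}, 1) = H_{S_L}\big((L, e^{i\theta}), (0, 1)\big) =: G(L, \theta).
\]
Setting $L = -\log q$ so that $\partial_q = -(1/q)\,\partial_L$, the claim of the lemma becomes equivalent to
\[
-\partial_L G(L, \theta) = \Big(1 + \tfrac{O(1)}{L}\Big)\, \frac{G(L, \theta)}{L}.
\]

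Next I will compute $G$ by separation of variables on the cylinder. Expanding the Dirichlet problem with a Dirac mass at $(0,1)$ on the $\{0\}$-boundary and zero on the $\{L\}$-boundary in Fourier series on $\mathbb{S}^1$, and reading off the inward normal derivative at $(L, e^{i\theta})$, one obtains
\[
G(L, \theta) = \frac{1}{2\pi L} + \frac{1}{\pi} \sum_{n \geq 1} \frac{n \cos(n\theta)}{\sinh(nL)}.
\]
This series is real-analytic in $L > 0$ and may be differentiated termwise, which immediately gives the asserted differentiability in $q$.

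For the asymptotic statement, the key regime is $L \geq 1$ (equivalently $q$ small). Here the bound $\sinh(nL) \geq e^{nL}/4$ gives, uniformly in $\theta \in [0, 2\pi]$,
\[
\Big| G(L, \theta) - \tfrac{1}{2\pi L} \Big| + \Big| \partial_L G(L, \theta) + \tfrac{1}{2\pi L^2} \Big| \leq C e^{-L}.
\]
Consequently $-\partial_L G - G/L = O(e^{-L})$ while $G/L \sim 1/(2\pi L^2)$, so $-\partial_L G = (G/L)\big(1 + O(L^2 e^{-L})\big)$, which is much stronger than the required factor $1 + O(1/L)$. For $L$ in a compact subinterval of $(0, \infty)$ (i.e.\ $q$ bounded away from both $0$ and $1$), both $G$ and $\partial_L G$ are smooth and bounded uniformly in $\theta$, so the estimate is trivial with a sufficiently large implicit constant; and for $L \to 0^+$ (i.e.\ $q \to 1^-$) the factor $O(1)/\log q$ in the statement is arbitrarily large in magnitude, so only boundedness of $-L\partial_L G/G$ needs to be checked, which follows from routine size estimates on the Fourier series as $L \to 0^+$. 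I do not anticipate a conceptual difficulty: the conformal reduction is routine, and the exponential tail bound handles the only regime where a non-trivial asymptotic is being claimed. The one point to be careful about is tracking the boundary Jacobian $|\psi'| = 1/|z|$ correctly under conformal transformation of the boundary Poisson kernel.
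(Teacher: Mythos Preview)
Your approach via direct Fourier expansion on the cylinder is correct in the relevant regime and genuinely different from the paper's. The paper proceeds indirectly: it first proves a preparatory lemma establishing regularity in $\theta$ of $f(\theta) = H_{\D\setminus q\D}(qe^{i\theta}, 1)$ (namely $f(\theta) = (1+o(1))f(0)$ and $f''(0) = O(f(0)/(\log q)^2)$, via an explicit series on the unfolded strip), and then uses the Markov-type decomposition
\[
H_{\D\setminus q_2\D}(1, q_2 e^{i\theta}) = q_1\int_0^{2\pi} H_{\D\setminus q_1\D}(1, q_1 e^{i\alpha})\, H_{\D\setminus q_2\D}(q_1 e^{i\alpha}, q_2 e^{i\theta})\, d\alpha
\]
with $q_2 = q_1 - \eps$, expands to first order in $\eps$, and controls the second-order remainder using the $f''$ bound. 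Your route---differentiating the explicit series $G(L,\theta) = \tfrac{1}{2\pi L} + \tfrac{1}{\pi}\sum_{n\geq 1}\tfrac{n\cos(n\theta)}{\sinh(nL)}$ directly---bypasses this two-step argument; the visible leading term $1/(2\pi L)$ makes the asymptotic transparent for $L\to\infty$, and in that regime you even obtain the sharper error $O(L^2 e^{-L})$ in place of $O(1/L)$. The paper's series and yours are Poisson-summation duals of one another, but yours is better suited to differentiating in $L$.

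One genuine slip: in the $L\to 0^+$ regime you assert that $-L\,\partial_L G/G$ is bounded. This is false for $\theta\in(0,2\pi)$. Using the dual representation $G(L,\theta)=\tfrac{\pi}{4L^2}\sum_{n\in\Z}\cosh^{-2}\!\big(\tfrac{\pi(\theta+2\pi n)}{2L}\big)$, one sees that $G(L,\theta)\asymp L^{-2}\exp\!\big(-\pi\min(\theta,2\pi-\theta)/L\big)$ as $L\to 0$, whence $-L\,\partial_L G/G \sim -\pi\min(\theta,2\pi-\theta)/L$, which blows up. What the lemma actually requires is only $-L\,\partial_L G/G - 1 = O(1/L)$, equivalently $L^2\,\partial_L G/G = O(1)$; and this \emph{does} hold uniformly in $\theta$, since $L^2\,\partial_L\log G \to \pi\min(\theta,2\pi-\theta)\leq \pi^2$. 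With this correction your argument is complete.
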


We now assume that Lemma \ref{L:Poisson_kernel} holds and we prove Lemma \ref{L:loopmeasure_annulus_quantitative}. The proof of Lemma \ref{L:Poisson_kernel} is contained in Section \ref{S:Poisson}.

\begin{proof}[Proof of Lemma \ref{L:loopmeasure_annulus_quantitative}]
By rooting the loop at the unique point whose distance to the origin is maximal, we find that
\begin{equation}
\label{E:decomposition_loopmeasure}
\loopmeasure_\D = \frac{1}{\pi} \int_0^1 r \d r \int_0^{2\pi} \d \theta_2 ~\bubmeasure_{r \D} (r e^{i\theta_2}).
\end{equation}
where $\bubmeasure_{r\D}(r e^{i\theta_2})$ is the bubble measure defined in \cite{Lawler04}. This decomposition is similar to \cite[Proposition 8]{Lawler04} except that in our normalisation the loop measure is twice larger than in \cite{Lawler04} (equivalently, the critical intensity is 1/2 for us whereas it is 1 for them). Moreover, one can compute for all $r \geq r_1$ and $\theta_2 \in [0,2\pi]$,
\[
\bubmeasure_{r \D}(r e^{i \theta_2})( \{ \wp \text{~hit~} r_1 \D \} ) = \pi \int_0^{2\pi} \d \theta_1 r_1 H_{r \D \setminus r_1 \D}(r e^{i\theta_2}, r_1 e^{i \theta_1}) H_{r \D}(r_1 e^{i\theta_1}, r e^{i\theta_2}),
\]
leading to
\begin{align}
\label{E:loopmeasure_annulus}
& \loopmeasure_\D ( \{ \wp \mathrm{~crossing~} r_2 \D \setminus r_1 \D \} ) 
= \int_{r_2}^1 r \d r \int_0^{2\pi} \d \theta_2
\int_0^{2\pi} \d \theta_1 r_1 H_{r \D \setminus r_1 \D}(r e^{i\theta_2}, r_1 e^{i \theta_1}) H_{r \D}(r_1 e^{i\theta_1}, r e^{i\theta_2}).
\end{align}
Using the explicit expression \eqref{E:Poisson_kernel_disc} of the Poisson kernel in a disc, we see that for all $r \geq r_2$,
\begin{equation}
\label{E:proof_Poisson_disc1}
H_{r \D}(r_1 e^{i\theta_1},r e^{i\theta_2})
= \left( 1 + O(1) \frac{r_1}{r_2} \right) \frac{1}{2\pi r}
\end{equation}
and 
\begin{equation}
\label{E:proof_Poisson_disc2}
\frac{\d}{\d r_1} H_{r \D}(r_1 e^{i\theta_1},r e^{i\theta_2}) = - \frac{2 \cos(\theta_1 - \theta_2)}{2\pi r^2} + O(1) \frac{r_1}{r^3}.
\end{equation}
Plugging the estimate \eqref{E:proof_Poisson_disc1} in \eqref{E:loopmeasure_annulus}, we obtain that
\begin{align*}
\loopmeasure_\D ( \{ \wp \mathrm{~crossing~} r_2 \D \setminus r_1 \D \} )
= \Big( 1 + O(1)\frac{r_1}{r_2} \Big) \frac{1}{2\pi} \int_{r_2}^1 \d r \int_0^{2\pi}  \d \theta_2 \int_0^{2\pi} \d \theta_1 ~r_1 H_{r \D \setminus r_1 \D}(re^{i\theta_2},r_1 e^{i\theta_1}).
\end{align*}
Since for all $\theta_1 \in [0,2\pi]$ and $r \in [r_2,1]$ (see \eqref{E:Poisson_boundary})
\begin{align}
\label{E:proof_Poisson_disc3}
\int_{r \partial \D} H_{r \D \setminus r_1 \D}(z,r_1 e^{i\theta}) \d z
& = \lim_{\eps \to 0} \frac{1}{\eps} \int_{r \partial \D} H_{r \D \setminus r_1 \D}(z,(r_1 + \eps) e^{i\theta}) \d z \\
& = \lim_{\eps \to 0} \frac{1}{\eps} \PROB{(r_1 + \eps) e^{i\theta}}{ \tau_{r \partial \D} < \tau_{r_1 \partial \D} }
= \frac{1}{r_1 \log(r/r_1)},
\nonumber
\end{align}
we deduce that
\begin{align*}
\loopmeasure_\D ( \{ \wp \mathrm{~crossing~} r_2 \D \setminus r_1 \D \} )
= \left( 1 + O(1)\frac{r_1}{r_2} \right) \int_{r_2}^1 \frac{\d r}{r \log(r/r_1)}
= \left( 1 + O(1)\frac{r_1}{r_2} \right) \log \frac{\log(1/r_1)}{\log(r_2/r_1)}.
\end{align*}
This proves \eqref{E:L_loopmeasure_annulus_quantitative1}.

We now move on to the proof of \eqref{E:L_loopmeasure_annulus_quantitative2}.
In Lemma \ref{L:Poisson_kernel} we studied the boundary Poisson kernel in an annulus of the form $\D \setminus q \D$ which is related to the Poisson kernel we are interested in by
\[
H_{r \D \setminus r_1 \D}(r e^{i\theta_2},r_1 e^{i\theta_1}) = \frac{1}{r^2} H_{\D \setminus \frac{r_1}{r} \D}\Big(1, \frac{r_1}{r} e^{i(\theta_1-\theta_2)}\Big).
\]
By Lemma \ref{L:Poisson_kernel}, the above left hand side is therefore differentiable with respect to $r_1$ and
\[
\frac{\d}{\d r_1} \left( r_1 H_{r \D \setminus r_1 \D}(r e^{i\theta_2},r_1 e^{i\theta_1}) \right) = \left( 1 + \frac{O(1)}{\log(r/r_1)} \right) \frac{1}{\log(r/r_1)} H_{r \D \setminus r_1 \D}(r e^{i\theta_2},r_1 e^{i\theta_1}).
\]
Hence, $\loopmeasure_\D ( \{ \wp \mathrm{~crossing~} r_2 \D \setminus r_1 \D \} )$ is differentiable with respect to $r_1$ and using \eqref{E:proof_Poisson_disc1} and \eqref{E:proof_Poisson_disc2} we find that its derivative is equal to
\begin{align*}
& \int_{r_2}^1 r \d r \int_0^{2\pi}  \d \theta_2 \int_0^{2\pi} \d \theta_1 ~
\Big\{ \frac{\d}{\d r_1} \left( r_1 H_{r \D \setminus r_1 \D}(r e^{i\theta_2},r_1 e^{i\theta_1}) \right) H_{r \D}(r_1 e^{i\theta_1},r e^{i\theta_2}) \\
& ~~~~~~~~~~~~~~~~~~~~~~~~~~~~~~~~~~ +
r_1 H_{r \D \setminus r_1 \D}(r e^{i\theta_2},r_1 e^{i\theta_1}) \frac{\d }{\d r_1} H_{r \D}(r_1 e^{i\theta_1},r e^{i\theta_2})
\Big\} \\
& = \left( 1 + \frac{O(1)}{\log(r_2/r_1)} \right) \int_{r_2}^1 r \d r \int_0^{2\pi} \d \theta_2 \int_0^{2\pi} \d \theta_1 ~
\Big\{ \frac{1}{2\pi r} \frac{1}{\log(r/r_1)} + O(1) \frac{r_1}{r^2} \Big\} H_{r \D \setminus r_1 \D}(r e^{i\theta_2},r_1 e^{i\theta_1}) \\
& = \left( 1 + \frac{O(1)}{\log(r_2/r_1)} \right) \int_{r_2}^1 r \d r \int_0^{2\pi} \d \theta_2 \int_0^{2\pi} \d \theta_1 ~ \frac{1}{2\pi r} \frac{1}{\log(r/r_1)} H_{r \D \setminus r_1 \D}(r e^{i\theta_2},r_1 e^{i\theta_1}).
\end{align*}
We again use the fact that (see \eqref{E:proof_Poisson_disc3})
\[
\int_0^{2\pi} H_{r \D \setminus r_1 \D}(r e^{i\theta_2},r_1 e^{i\theta_1}) \d \theta_2
= \frac{1}{r r_1 \log(r/r_1)}
\]
to obtain that
\begin{align*}
\frac{\d}{\d r_1} \loopmeasure_\D ( \{ \wp \mathrm{~crossing~} r_2 \D \setminus r_1 \D \} )
& = \left( 1 + \frac{O(1)}{\log(r_2/r_1)} \right) \int_{r_2}^1 \frac{\d r}{r r_1 \log(r/r_1)^2} \\
& = \left( 1 + \frac{O(1)}{\log(r_2/r_1)} \right) \frac{1}{r_1} \left( \frac{1}{\log(r_2/r_1)} - \frac{1}{\log(1/r_1)} \right).
\end{align*}
This concludes the proof of \eqref{E:L_loopmeasure_annulus_quantitative2}, assuming Lemma \ref{L:Poisson_kernel}.
\end{proof}

\subsection{Boundary Poisson kernel in an annulus}\label{S:Poisson}

The goal of this section is to prove Lemma \ref{L:Poisson_kernel}. We start by stating and proving an intermediate result.

\begin{lemma}\label{lem:f_develop}
Let  $f(\theta):=H_{{\mathbb{D}}\setminus q{\mathbb{D}}} (q e^{i\theta}, 1)$.
For all $\theta\in[0,2\pi]$, we have
\begin{align}\label{eq:lem1}
f(\theta) = (1+o(1)) f(0)  \, \text{ as } \, q\to 0.
\end{align}
For all $\alpha, \theta \in[0, 2\pi]$, we have
\begin{align}\label{eq:lem2}
f (\alpha) - f(\theta) = f'(\theta) (\alpha -\theta) + \frac12 (\alpha -\theta)^2 f''(0)  \left(1+ o(1)\right)  \, \text{ as } \, q\to 0.
\end{align}
Moreover, 
\begin{align}\label{eq:lem3}
f''(0) =\frac{O(1)}{(\log q)^2} f(0)  \, \text{ as } \, q\to 0.
\end{align}
\end{lemma}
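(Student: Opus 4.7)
My plan is to derive an explicit Fourier series for $f$ and extract all three claims from it. Let $u(z) := P_{\D \setminus q\D}(z, 1)$ denote the Poisson kernel, viewed as a harmonic function on the annulus that vanishes on the inner boundary $q \partial\D$ and whose boundary trace on $\partial\D$ is the Dirac mass at $1$. Separation of variables in polar coordinates $(r,\theta)$, with the ansatz $u(r e^{i\theta}) = (a_0 + b_0 \log r) + \sum_{n \neq 0}(a_n r^n + b_n r^{-n}) e^{in\theta}$, and matching Fourier coefficients at both circles, yields
\[
u(r e^{i\theta}) = \frac{1}{2\pi} - \frac{\log r}{2\pi \log q} + \sum_{n \neq 0} \frac{r^n - q^{2n} r^{-n}}{2\pi (1 - q^{2n})} e^{in\theta}.
\]
The inward normal to $\D \setminus q\D$ at $q e^{i\theta}$ points radially outward from the origin, so $f(\theta) = \partial_r u(r e^{i\theta})|_{r = q}$. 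Pairing $n$ with $-n$ gives the explicit representation
\[
f(\theta) = -\frac{1}{2\pi q \log q} + \frac{2}{\pi q} \sum_{n = 1}^{\infty} \frac{n q^n}{1 - q^{2n}} \cos(n\theta).
\]

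With this formula in hand, \eqref{eq:lem1} and \eqref{eq:lem3} reduce to size comparisons. The oscillating sum is bounded uniformly in $\theta$ by $\frac{2}{\pi q} \sum_{n \geq 1} n q^n / (1 - q^{2n})$, which is of order $1$ as $q \to 0$ (the leading contribution is the $n=1$ term $\frac{2}{\pi(1-q^2)}$). The constant part is of order $1/(q|\log q|) \to \infty$, so $f(0) \sim -1/(2\pi q \log q)$ and $f(\theta) - f(0) = O(1) = O(q|\log q|) \cdot f(0) = o(f(0))$ uniformly in $\theta$, giving \eqref{eq:lem1}. Termwise differentiation, justified by geometric decay of the coefficients, gives
\[
f''(0) = -\frac{2}{\pi q}\sum_{n = 1}^{\infty}\frac{n^3 q^n}{1 - q^{2n}} = O(1),
\]
and since $f(0)/(\log q)^2 \to \infty$ we get $f''(0) = O(1) = O(f(0)/(\log q)^2)$, which is \eqref{eq:lem3}.

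For \eqref{eq:lem2} I would apply Taylor's theorem to the smooth function $f$, writing $f(\alpha) - f(\theta) = f'(\theta)(\alpha - \theta) + \tfrac{1}{2} f''(\xi)(\alpha-\theta)^2$ for some $\xi$ between $\theta$ and $\alpha$, and then compare $f''(\xi)$ with $f''(0)$ using the series: the Fourier representation shows that the leading $n = 1$ mode dominates $f''$ with higher modes contributing $O(q)$, so the second-order Taylor coefficient can be matched with $\tfrac12(\alpha - \theta)^2 f''(0)(1 + o(1))$ as $q \to 0$. The required uniformity comes from the same geometric tail estimates on $\sum_n n^k q^n/(1-q^{2n})$ used above.

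The only delicate technical step is the derivation of the Fourier series and the justification of termwise differentiation at the boundary $r = q$; once this is in place, each claim is a one-line estimate on a convergent series with fully explicit $q$-dependence, and the three assertions follow from direct bookkeeping on the leading $n=1$ term versus the tail.
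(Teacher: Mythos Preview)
Your Fourier-series representation is correct and gives a route different from the paper's: the paper maps the annulus to a vertical strip via $z\mapsto(\log z)/|\log q|$ and writes $f(\theta)=\frac{\pi}{q|\log q|^{2}}\sum_{n\in\Z}\cosh\!\big(\frac{(\theta+2n\pi)\pi}{2\log q}\big)^{-2}$; your series is the Poisson-summation dual of this. Claims \eqref{eq:lem1} and \eqref{eq:lem3} follow cleanly from your formula exactly as you describe.

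For \eqref{eq:lem2}, however, your justification has a genuine gap. From your own series, $f''(\xi)=-\tfrac{2}{\pi q}\sum_{n\ge 1}\tfrac{n^{3}q^{n}}{1-q^{2n}}\cos(n\xi)\to -\tfrac{2}{\pi}\cos\xi$ as $q\to 0$, while $f''(0)\to -\tfrac{2}{\pi}$; hence $f''(\xi)/f''(0)\to\cos\xi$, not $1$. The $n=1$ mode does dominate, but it carries the factor $\cos\xi$, so your claim that the Lagrange remainder $\tfrac12(\alpha-\theta)^{2}f''(\xi)$ matches $\tfrac12(\alpha-\theta)^{2}f''(0)(1+o(1))$ is false whenever $\xi$ stays away from $0$ and $2\pi$. (The paper's own argument has the same defect: its assertion $f''(\theta)=f''(0)(1+o(1))$ is obtained by treating the shift $\theta\pi/(2\log q)\to 0$ as negligible in a Riemann sum whose leading term $\int(1-3\tanh^{2})\cosh^{-2}=0$ vanishes, so the next order --- which does depend on $\theta$ --- is what survives.) What your computation \emph{does} give is the uniform bound $|f''(\xi)|\le C\,|f''(0)|$, hence $f(\alpha)-f(\theta)-f'(\theta)(\alpha-\theta)=O\big((\alpha-\theta)^{2}f''(0)\big)$. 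Combined with \eqref{eq:lem3} this is precisely the input used in the proof of Lemma~\ref{L:Poisson_kernel}, so the downstream application is unaffected; only the sharp $(1+o(1))$ in the statement of \eqref{eq:lem2} is not actually attained by either argument.
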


\begin{proof}
The multivalued conformal map $z\mapsto \log z/|\log q|$ sends the region ${\mathbb{D}}\setminus q {\mathbb{D}}$ to the vertical strip $V:=\{z = x+ i y: -1 \le x \le 0, y \in\R\}$. We have 
\begin{align}\label{eq:H_series}
H_{{\mathbb{D}} \setminus q {\mathbb{D}}} (1, q e^{i\theta})  =\frac{1}{q |\log q|^2} \sum_{n\in {\mathbb{Z}}} H_V (0, -1 + i (\theta+ 2n \pi)/|\log q|).
\end{align}
We can map the strip $V$ onto the upper half-plane ${\mathbb{H}}$ by the conformal map $\varphi: z\mapsto \exp(i z \pi)$, so that
\begin{align*}
|\varphi'(0)| |\varphi'( -1 + i (\theta+ 2n \pi)/|\log q|)| H_V (0, -1 + i (\theta+ 2n \pi)/|\log q|) =  &H_{\mathbb{H}} (1, -\exp((\theta+2n \pi) \pi/ \log q)),
\end{align*}
which tends to $H_{\mathbb{H}}(1, -1)$ as $q\to 0$.
Note that
\begin{align*}
|\varphi'(0)| =\pi, \quad  |\varphi'( -1 + i (\theta+ 2n \pi)/|\log q|)|= \pi \exp((\theta+2n \pi) \pi/ \log q).
\end{align*}
This implies that
\begin{align*}
H_V (0, -1 + i (\theta+ 2n \pi)/|\log q| )= \pi  \exp((\theta+2n \pi) \pi/ \log q) (1+ \exp((\theta+2n \pi) \pi/ \log q))^{-2}.
\end{align*}
Therefore
\begin{align}
f(\theta)=&\frac{\pi}{q |\log q|^2} \sum_{n\in {\mathbb{Z}}} \exp((\theta+2n \pi) \pi/ \log q)  (1+ \exp((\theta+2n \pi) \pi/ \log q))^{-2}\\
=&\frac{\pi}{q |\log q|^2} \sum_{n\in {\mathbb{Z}}} \cosh\left( \frac{(\theta+2n \pi) \pi}{2\log q}\right)^{-2}.
\end{align}
Therefore
\begin{align}
\label{eq:dif0}
&f'(\theta)  =-\frac{\pi}{q(\log q)^3} \sum_{n\in {\mathbb{Z}}} \tanh\left( \frac{(\theta+2n \pi) \pi}{2\log q}\right) \cosh\left( \frac{(\theta+2n \pi) \pi}{2\log q}\right)^{-2}, \\
&f''(\theta)  =-\frac{\pi}{q(\log q)^4} \sum_{n\in {\mathbb{Z}}} \left(1-3 \tanh\left( \frac{(\theta+2n \pi) \pi}{2\log q}\right)^2\right) \cosh\left( \frac{(\theta+2n \pi) \pi}{2\log q}\right)^{-2}
\end{align}
Using that
\begin{align*}
\exp (\theta \pi/ (2\log q) ) = 1+  o(1) \text{ as } q\to 0,
\end{align*}
we deduce that for all $\theta\in [0,2\pi]$,
\begin{align*}
f(\theta) = f( 0)  \left(1+  o(1)\right), \quad f'' (\theta) = f'' ( 0)  \left(1+  o(1)\right).
\end{align*}
This proves~\eqref{eq:lem1}.
Noting that $\tanh$ takes values in $(-1,1)$, we can also deduce that
\begin{align*}
 f''(0) =\frac{O(1)}{(\log q)^2} f(0),
\end{align*}
proving~\eqref{eq:lem3}.
Therefore for all $\alpha, \theta\in [0, 2\pi]$,
\begin{align*}
f(\alpha) -f(\theta)=&\int_\theta^\alpha f'(\beta) d\beta= \int_\theta^\alpha (f'(\beta) - f'(\theta)) d\beta + (\alpha-\theta) f'(\theta)\\
=& \int_\theta^\alpha \int_\theta^\beta f''(\gamma) d\gamma d\beta + (\alpha-\theta) f'(\theta)\\
=& \int_\theta^\alpha (\beta -\theta) d\beta  f'' ( 0)  \left(1+  \frac{O(1)}{\log q}\right) + (\alpha-\theta) f'(\theta)\\
=&\frac12(\alpha-\theta)^2 f''(0)  \left(1+  \frac{O(1)}{\log q}\right) + (\alpha-\theta) f'(\theta).
\end{align*}
This proves~\eqref{eq:lem2}, and completes the proof of the lemma.
\end{proof}

We are now ready to prove Lemma~\ref{L:Poisson_kernel}.

\begin{proof}[Proof of Lemma~\ref{L:Poisson_kernel}]
Fix $q_1 \in(0,1)$ and $q_2=q_1 -\eps$, where $\eps>0$ is small and will tend to $0$. We have
\begin{align*}
H_{{\mathbb{D}}\setminus q_2 {\mathbb{D}}} (1, q_2 e^{i\theta}) = &q_1 \int_0^{2\pi} H_{{\mathbb{D}}\setminus q_1 {\mathbb{D}}} (1, q_1 e^{i\alpha}) H_{{\mathbb{D}}\setminus q_2 {\mathbb{D}}} (q_1 e^{i\alpha}, q_2 e^{i\theta}) d \alpha.
\end{align*}
Therefore 
\begin{align}
\label{eq:H1}
\frac{1}{q_1}H_{{\mathbb{D}}\setminus q_2 {\mathbb{D}}} (1, q_2 e^{i\theta})
& =  \int_0^{2\pi} H_{{\mathbb{D}}\setminus q_1 {\mathbb{D}}} (1, q_1 e^{i\theta}) H_{{\mathbb{D}}\setminus q_2 {\mathbb{D}}} (q_1 e^{i\alpha}, q_2 e^{i\theta}) d \alpha \\
& + \int_0^{2\pi} ( H_{{\mathbb{D}}\setminus q_1 {\mathbb{D}}} (1, q_1 e^{i\alpha})  -H_{{\mathbb{D}}\setminus q_1 {\mathbb{D}}} (1, q_1 e^{i\theta}) ) H_{{\mathbb{D}}\setminus q_2 {\mathbb{D}}} (q_1 e^{i\alpha}, q_2 e^{i\theta}) d \alpha.
\nonumber
\end{align}
The first term on the right hand side of \eqref{eq:H1} is equal to
\begin{align}
\notag
& H_{{\mathbb{D}}\setminus q_1 {\mathbb{D}}} (1, q_1 e^{i\theta}) \int_0^{2\pi}  H_{{\mathbb{D}}\setminus q_2 {\mathbb{D}}} (q_1 e^{i\alpha}, q_2 e^{i\theta}) d \alpha =  H_{{\mathbb{D}}\setminus q_1 {\mathbb{D}}} (1, q_1 e^{i\theta}) \int_0^{2\pi}  H_{{\mathbb{D}}\setminus q_2 {\mathbb{D}}} (q_1, q_2 e^{i\alpha}) d \alpha \\
\label{eq:H3}
&=\frac{1}{q_2} H_{{\mathbb{D}}\setminus q_1 {\mathbb{D}}} (1, q_1 e^{i\theta}) \frac{|\log q_1|}{|\log q_2|} = \frac{1}{q_2} H_{{\mathbb{D}}\setminus q_1 {\mathbb{D}}} (1, q_1 e^{i\theta}) \left(1 + \frac{1}{q_1 \log q_1} \eps + O_\eps(\eps^2) \right).
\end{align}
Let $f(\theta):=H_{{\mathbb{D}}\setminus q_1 {\mathbb{D}}} (1, q_1 e^{i\theta})$. 
By Lemma~\ref{lem:f_develop}, the second term on the right hand side of \eqref{eq:H1} is equal to
\begin{align*}
 \int_{\theta-\pi}^{\theta+\pi} f'(\theta)(\alpha -\theta) H_{{\mathbb{D}}\setminus q_2 {\mathbb{D}}} (q_1 e^{i\alpha}, q_2 e^{i\theta}) d \alpha +\frac12  \int_{\theta-\pi}^{\theta+\pi} (\alpha-\theta)^2 f''(0) (1+o(1)) H_{{\mathbb{D}}\setminus q_2 {\mathbb{D}}} (q_1 e^{i\alpha}, q_2 e^{i\theta}) d \alpha.
\end{align*}
By symmetry, the first term in the above line is $0$, hence the second term on the right hand side of \eqref{eq:H1} is equal to
\begin{align}\label{eq:H22}
\frac12  \int_{\theta-\pi}^{\theta+\pi} (\alpha-\theta)^2 f''(0) (1+o(1)) H_{{\mathbb{D}}\setminus q_2 {\mathbb{D}}} (q_1 e^{i\alpha}, q_2 e^{i\theta}) d \alpha.
\end{align}
Dividing the term in \eqref{eq:H22} by $\eps$ and take the limit as $\eps\to 0$, we get
\begin{align}
\notag
&\frac12  f''(0) (1+o(1)) \int_{\theta-\pi}^{\theta+\pi} (\alpha-\theta)^2 H_{{\mathbb{D}}\setminus q_1 {\mathbb{D}}} (q_1 e^{i\alpha}, q_1 e^{i\theta}) d \alpha\\
\label{eq:H23}
& =\frac12 (1+o(1)) f''(0) \int_{-\pi}^\pi \beta^2  H_{{\mathbb{D}}\setminus q_1 {\mathbb{D}}} (q_1, q_1 e^{i\beta}) d \beta.
\end{align}
Note that
\begin{align*}
H_{{\mathbb{D}}\setminus q_1 {\mathbb{D}}} (q_1, q_1 e^{i\beta}) \le H_{\C\setminus q_1 {\mathbb{D}}} (q_1, q_1 e^{i\beta}) =\frac{1}{\pi q^2} \sin(\theta/2)^{-2}.
\end{align*}
Therefore, 
\begin{align*}
 \int_{-\pi}^\pi \beta^2  H_{{\mathbb{D}}\setminus q_1 {\mathbb{D}}} (q_1, q_1 e^{i\beta}) d \beta = O(1)q^{-2}.
\end{align*}
Therefore, combined with \eqref{eq:lem3},  \eqref{eq:H23} is equal to
$
 \frac{O(1)}{q_1^2 (\log q_1)^2} f(0),
$
which is, by \eqref{eq:lem1}, further equal to
\begin{align*}
 \frac{O(1)}{q_1^2 (\log q_1)^2}  H_{{\mathbb{D}}\setminus q_1 {\mathbb{D}}} (1, q_1 e^{i\theta}).
\end{align*}
Combined with \eqref{eq:H1} and \eqref{eq:H3}, we get that
\begin{align*}
\lim_{\eps\to 0} \eps^{-1}\left(q_2 H_{{\mathbb{D}}\setminus q_2 {\mathbb{D}}} (1, q_2 e^{i\theta}) - q_1 H_{{\mathbb{D}}\setminus q_1 {\mathbb{D}}} (1, q_1 e^{i\theta})\right) =
\frac{1}{\log q_1} H_{{\mathbb{D}}\setminus q_1 {\mathbb{D}}} (1, q_1 e^{i\theta}) \left(1 + \frac{O(1)}{\log q_1}\right).
\end{align*}
This completes the proof.
\end{proof}

\subsection{Surrounding a disk with loops}

We will say that a loop $\wp \in \Lc_\D^\theta$ surrounds the disc $r \D$ if $\wp$ does not intersect $r \D$ but disconnects it from $\partial \D$.

\begin{lemma}\label{L:surround}
There exists $c = c(\theta)>0$ such that for all $r \in (0,1/10)$,
\begin{equation}
\Prob{ \exists \wp \in \Lc_\D^\theta \mathrm{~surrounding~} r \D } \geq 1 - r^c.
\end{equation}
\end{lemma}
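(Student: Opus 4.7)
The plan is to construct many disjoint dyadic annuli sitting between $r\partial\D$ and $\partial\D$, to observe that the existence of a surrounding loop inside each annulus is an independent event, and to conclude by a union bound over these independent events.

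First, I would fix a geometrically shrinking sequence of \emph{disjoint} annuli, for instance
\[
A_k := \big\{ z \in \D : 4^{-k} < |z| < 2 \cdot 4^{-k} \big\}, \qquad k = 1, 2, \dots, K,
\]
where $K = K(r)$ is the largest integer such that $4^{-K} > r$; in particular $K \geq c_0 |\log r|$ for some absolute constant $c_0 > 0$. Note that the closures of the $A_k$ are pairwise disjoint, and any loop contained in $\overline{A_k}$ that separates the two boundary components of $A_k$ automatically surrounds the disc $r\D$. Let
\[
E_k := \big\{ \exists \wp \in \Lc_\D^\theta : \wp \subset \overline{A_k} \text{ and } \wp \text{ disconnects } 4^{-k} \partial \D \text{ from } 2 \cdot 4^{-k} \partial \D \big\}.
\]

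Since the $\overline{A_k}$ are disjoint and the Brownian loop soup is a Poisson point process, the point processes of loops contained in each $\overline{A_k}$ are independent, so the events $E_k$ are independent. By the restriction property of the Brownian loop measure, the measure of loops $\wp \subset \overline{A_k}$ coincides with $\loopmeasure_{A_k}$ restricted to such loops. Since all the $A_k$ are scaled copies of the fixed annulus $A_1 = \{1/4 < |z| < 1/2\}$, the conformal invariance of $\loopmeasure$ (see \cite[Proposition 5.27]{LawlerConformallyInvariantProcesses}) gives
\[
\loopmeasure_{A_k}\big( \{ \wp \text{ separating the two boundary components of } A_k \} \big) = m_0
\]
for a constant $m_0 > 0$ \emph{independent of $k$}. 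The positivity $m_0 > 0$ follows from the fact that, for instance, Brownian bridges rooted at a point of $A_1$ with duration in $[1,2]$ have uniformly positive probability of staying in $A_1$ and of winding once around the origin; this contributes a positive mass to $\loopmeasure_{A_1}$ restricted to separating loops. Hence $\P(E_k) = 1 - e^{-\theta m_0}$ for every $k$.

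Combining independence of the $E_k$ with the above and with $K \geq c_0 |\log r|$, we obtain
\[
\P\big( \not\exists \wp \in \Lc_\D^\theta \text{ surrounding } r\D \big) \leq \P\Big( \bigcap_{k=1}^K E_k^c \Big) = \big( e^{-\theta m_0} \big)^K \leq r^{c},
\]
with $c := c_0\, \theta m_0 > 0$, which is the claimed bound. The only non-trivial point in the plan is the positivity $m_0 > 0$; this is however a soft statement about Brownian loops realizing a given topological class with positive measure, and can be verified directly from the definition of $\loopmeasure$.
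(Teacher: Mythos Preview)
Your proof is correct and follows essentially the same approach as the paper: partition the region between $r\partial\D$ and $\partial\D$ into $\asymp |\log r|$ concentric annuli, use the Poisson property to get independence of the surrounding-loop events in each, and use scale invariance to see that each event has the same positive probability. The paper uses the adjacent annuli $e^{-k}\D\setminus e^{-k-1}\D$ rather than your dyadic annuli with gaps, and simply asserts $p<1$ where you justify $m_0>0$, but these are cosmetic differences.
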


\begin{proof}
By monotonicity, it suffices to prove the lemma for $r$ of the form $r = e^{-n}$ for some integer $n \geq 1$.
By independence of the collections of loops $\{ \wp \in \Lc_\D^\theta, \wp \subset e^{-k} \D \setminus e^{-k-1} \D \}$, $k =0 \dots n-1$, and by scale invariance, the probability that no loop in $\Lc_\D^\theta$ surrounds $r \D$ is at most
\begin{align*}
\Prob{ \forall k=0 \dots n-1, \nexists \wp \in \Lc_\D^\theta \mathrm{~included~in~} e^{-k} \D \mathrm{~which~surrounds~} e^{-k-1} \D  }
= p^n
\end{align*}
where $p = \Prob{ \nexists \wp \in \Lc_\D^\theta \mathrm{~surrounding~} e^{-1} \D  }$. The fact that $p < 1$ concludes the proof.
\end{proof}

\subsection{A priori upper bound on the crossing probability of an annulus by a cluster}

We continue this preliminary section by recalling that when $\theta = 1/2$,

\begin{lemma}\label{L:theta=1/2}
When $\theta = 1/2$, the following limit exists and is nontrivial:
\[
\lim_{r \to 0} |\log r|^{1/2} \P \Big( e^{-1} \overset{\Lc_\D^{1/2}}{\longleftrightarrow} r \Big)
\in (0,\infty).
\]
\end{lemma}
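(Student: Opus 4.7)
The plan is to exploit the special structure available at $\theta = 1/2$, namely the isomorphism between the critical Brownian loop soup and the Gaussian free field (GFF). By Lupu's coupling (\cite{LupuIsomorphism, Lupu19ConvCLE}), there is a natural coupling of $\Lc_\D^{1/2}$ with a (metric graph) GFF $\tilde\phi$ on $\D$ such that the clusters of $\Lc_\D^{1/2}$ coincide with the sign clusters of $\tilde\phi$. Under this coupling, the event $\{ e^{-1} \overset{\Lc_\D^{1/2}}{\longleftrightarrow} r \partial\D \}$ is equivalent, up to events of comparable probability coming from sampling the signs of the individual clusters, to the existence of a connected constant-sign component of $\tilde\phi$ joining a fixed macroscopic point to the circle $r \partial \D$.

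The key observation is that such a constant-sign component cannot cross a circle on which the GFF changes sign. In particular, letting $h_t$ denote the circle average of $\phi$ on the circle of radius $e^{-t}$ centred at $0$, the existence of the sign-cluster crossing forces $h_t$ to maintain a constant sign for all $t \in [1, |\log r|]$. Since $(h_t)_{t \geq 0}$ is (up to a centering constant) a standard one-dimensional Brownian motion in the time parameter $t$, the reflection principle immediately yields an upper bound
\[
\P \Big( e^{-1} \overset{\Lc_\D^{1/2}}{\longleftrightarrow} r \Big) \leq C \, |\log r|^{-1/2}
\]
for some constant $C$. The matching lower bound will be obtained by a two-scale construction: condition on the circle average $h_t$ being positive on $[1, |\log r|]$ (positive probability of order $|\log r|^{-1/2}$), and, using the FKG inequality (Lemma \ref{L:FKG}) applied to the increasing event of having crossings in each dyadic sub-annulus together with the independence of loops across widely separated scales, show that with uniformly positive conditional probability the sign cluster extending from $e^{-1}$ does reach $r \partial \D$.

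For the existence (not just finiteness) of the limit, I would use a scaling/renewal argument. Using the conformal invariance of $\Lc_\D^{1/2}$, together with the independence of loops contained in disjoint annuli, one can show that the function $r \mapsto |\log r|^{1/2}\, \P(e^{-1} \overset{\Lc_\D^{1/2}}{\longleftrightarrow} r )$ satisfies an approximate multiplicative relation. Combining this with the bounds above and a Fekete-type argument (in the same spirit as Lemma \ref{L:submultiplicativity} and Corollary \ref{C:submultiplicative} used later in the paper for the subcritical regime), one concludes convergence. Alternatively, one may invoke the precise scaling limit identification of \cite{Lupu19ConvCLE} (convergence of metric-graph loop-soup clusters to the continuum GFF sign clusters) to directly express the limit as the first passage probability of a Brownian motion.

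The main obstacle is the lower bound. Converting a one-sided circle-average event into an actual connected cluster is delicate because the GFF is very rough and sign clusters can be pinched. The standard way around this is to work on the metric graph version, where sign clusters are well-behaved closed sets and Lupu's coupling is exact; this is precisely the tool developed in \cite{LupuIsomorphism, Lupu19ConvCLE, QianWerner19Clusters} which one should rely on to complete this step.
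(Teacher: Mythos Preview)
Your upper bound argument contains a genuine gap. The implication ``a constant-sign cluster crosses the annulus $\Rightarrow$ the circle average $h_t$ keeps a constant sign on $[1,|\log r|]$'' is false: the crossing cluster may occupy only a thin arc of each circle $e^{-t}\partial\D$, while the rest of the circle carries field of the opposite sign, so the average $h_t$ can vanish or change sign freely. Circle averages are not obstacles for sign clusters; only actual level lines of the GFF are. Thus nothing in your sketch yields the $|\log r|^{-1/2}$ upper bound. Your lower bound is acknowledged as incomplete, and your proposal for the \emph{existence} of the limit via a Fekete argument would at best give an exponent, not a constant; supermultiplicativity of a sequence $a_n$ with $a_n \asymp n^{-1/2}$ does not force $n^{1/2}a_n$ to converge.

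The paper's proof takes a different and sharper route. It isolates the clusters $(\Cc_n)_{n\geq 1}$ of $\Lc_\D^{1/2}$ that \emph{surround} $0$ (those that do not surround $0$ contribute only $O(r^c)$ by Lemma~\ref{L:surround}). The key input, specific to $\theta=1/2$, is that for each such cluster the ratio of conformal radii of its inner and outer boundaries satisfies
\[
\log\big(\CR_o(n)/\CR_i(n)\big)\ \overset{(d)}{=}\ T_{-\pi},
\]
the first hitting time of $-\pi$ by a standard one-dimensional Brownian motion, and these variables are i.i.d.\ in $n$ (this comes from the two-valued/first-passage set description of GFF clusters, \cite{ALS1,ALS2,QianWerner19Clusters}). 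Since $\P(T_{-\pi}\geq t)\sim\sqrt{2\pi}\,t^{-1/2}$, one obtains directly the exact asymptotic
\[
\P\big(e^{-1}\overset{\Lc_\D^{1/2}}{\longleftrightarrow} r\big)\ \sim\ \sqrt{2\pi}\,\Sigma\,|\log r|^{-1/2},
\qquad
\Sigma:=\sum_{n\geq 1}\P(\Cc_n\cap(\D\setminus e^{-1}\overline{\D})\neq\varnothing)<\infty,
\]
which gives both the upper and lower bounds and the existence of the limit in one stroke. The circle-average process does enter this picture, but only implicitly through the imaginary-geometry/first-passage-set results that identify the law of $\CR_o/\CR_i$; it is not used as a barrier for crossings.
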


\begin{proof}
First consider the event that there is a cluster $\Cc$ of
$\Lc_\D^{1/2}$ crossing from $e^{-1}\partial\D$
to $r\partial\D$ without surrounding the point $0$.
Such a cluster cannot exist if $\Lc_\D^{1/2}$ contains a Brownian loop $\wp$
contained in the annulus $e^{-1}\D\setminus (r\overline{\D})$
and disconnecting $e^{-1}\partial\D$ from $r\partial\D$.
So, by Lemma \ref{L:surround},
\[
\Prob{\exists \Cc 
\text{ cluster of }
\Lc_\D^{1/2},
e^{-1} \overset{\Cc}{\longleftrightarrow} r,~
\Cc \text{ does not surround } 0
}
\leq (e r)^{c},
\]
for some constant $c>0$ and $r$ small enough. 
So with the additional condition of not surrounding $0$,
the crossing probability decays much faster than logarithmically.
With this out of the way, let us concentrate on clusters of 
$\Lc_\D^{1/2}$ that surround $0$.

Let $(\Cc_{n})_{n\geq 1}$ be the infinite sequence of clusters of 
$\Lc_\D^{1/2}$ that surround $0$,
where the clusters are ordered from the most exterior in the nesting order
($\Cc_{n}$ surrounds $\Cc_{n+1}$).
We will denote by $\partial_{o}\Cc_{n}$ the outer boundary of
$\Cc_{n}$ and by $\partial_{i}\Cc_{n}$ the inner boundary component that surrounds $0$. 
Both $\partial_{o}\Cc_{n}$ and $\partial_{i}\Cc_{n}$ are known to be CLE$_{4}$-type
loops.
We will denote by $\Int(\partial_{o}\Cc_{n})$, resp.
$\Int(\partial_{i}\Cc_{n})$, the connected component of $0$ in
$\D\setminus \partial_{o}\Cc_{n}$, resp. $\D\setminus \partial_{i}\Cc_{n}$.
These are simply connected subdomains containing $0$.
We will denote
\[
\CR_{o}(n) = \CR(0,\Int(\partial_{o}\Cc_{n})),
\qquad
\CR_{i}(n) = \CR(0,\Int(\partial_{i}\Cc_{n})).
\]
For every $n\geq 1$, the r.v.
$\CR_{o}(n)/\CR_{i}(n)$ is independent from
$(\Cc_{1},\dots,\Cc_{n-1},\partial_{o}\Cc_{n})$.
Moreover, the family of r.v.s
$(\CR_{o}(n)/\CR_{i}(n))_{n\geq 1}$ is i.i.d.
The common law is known thanks to the relation between the clusters of 
$\Lc_\D^{1/2}$ and some local sets of a continuum GFF on $\D$
\cite{ALS2,ALS1,QianWerner19Clusters}.
We refer in particular to
\cite[Corollary 5.4]{ALS2} and \cite[Proposition 4.8]{ALS1}.
To express this law, consider $(B_{t})_{t\geq 0}$
a standard \textbf{one-dimensional} Brownian motion starting at $0$.
For $a>0$, denote
\begin{displaymath}
T_{-a} = \inf\{t\geq 0 : B_{t} = -a\}.
\end{displaymath}
Then $\log\big(\CR_{o}(n)/\CR_{i}(n)\big)$ has the same distribution as
$T_{-\pi}$.
Note that
\[
\Prob{T_{-\pi}\geq t}
=\Prob{\vert B_{t} \vert \leq \pi} \sim \sqrt{2\pi} t^{-1/2},
\quad \quad \text{as} \quad t \to \infty.
\]
We will also need the following comparison between conformal radius and the
Euclidean distance.
For every open simple connected domain $D\subset\C$, with $D\neq \C$,
and for every $z\in D$,
\begin{displaymath}
d(z,\partial D)\leq\CR(z,D)\leq 4 d(z,\partial D).
\end{displaymath}
The lower bound follows simply from the monotonicity of the
conformal radius.
The upper bound is the Koebe quarter theorem 
(Theorem 5-3 in \cite{Ahlfors2010ConfInv} and the subsequent corollary).

Denote
\[
n_{\rm cross}(r) = 
\inf\{n\geq 1 : e^{-1} \overset{\Cc_{n}}{\longleftrightarrow} r\}.
\]
If the crossing does not occur, we set $n_{\rm cross}(r) = +\infty$.
Let $A$ be the annulus
\[
A = \D\setminus(e^{-1}\overline{\D}) = 
\{z\in\C : e^{-1}<\vert z\vert <1\}.
\]
Denote
\[
\Sigma = 
\sum_{n\geq 1}\Prob{\Cc_{n}\cap A\neq\emptyset}.
\]
We claim that $\Sigma < +\infty$ and that
\begin{equation}
\label{E:pf_crossing_goal}
\Prob{n_{\rm cross}(r)<+\infty}
\sim \sqrt{2\pi}\Sigma \vert\log r\vert^{-1/2} \quad \quad \text{as} \quad r \to 0.
\end{equation}
This implies our lemma.

First, let us check that $\Sigma<+\infty$.
We have that
\[
\log(d(0,\Cc_{n-1})^{-1})
\geq
\log (\CR_{i}(n-1)^{-1}) - \log 4
\geq
\Big(\sum_{k=1}^{n-1}
\log (\CR_{o}(k)/\CR_{i}(k))\Big) - \log 4.
\]
Since the r.v.s $\log (\CR_{o}(k)/\CR_{i}(k))$ are i.i.d.
and
\[
\expect\Big[\log (\CR_{o}(k)/\CR_{i}(k))\Big]
=
\expect [T_{-\pi}]  = +\infty;
\]
the large deviation principle ensures that there is $u\in (0,1)$ such that
for every $n\geq 2$, 
\[
\Prob{
\sum_{k=1}^{n-1}
\log (\CR_{o}(k)/\CR_{i}(k))
\leq n-1 + \log 4}
\leq u^{n-1}.
\]
Thus,
\[
\Prob{d(0,\Cc_{n-1})\geq e^{-(n-1)}}\leq u^{n-1}.
\]
Further,
\begin{eqnarray*}
\Prob{\Cc_{n}\cap A\neq\emptyset}
&\leq &
\Prob{\Cc_{n}\cap A\neq\emptyset, d(0,\Cc_{n-1})< e^{-(n-1)}}
+
\Prob{d(0,\Cc_{n-1})\geq e^{-(n-1)}}
\\
&\leq & 
\Prob{\Cc_{n}\cap A\neq\emptyset, d(0,\Cc_{n-1})< e^{-(n-1)}}
+
u^{n-1}.
\end{eqnarray*}
For $n\geq 3$, on the event 
$\{\Cc_{n}\cap A\neq\emptyset, d(0,\Cc_{n-1})< e^{-(n-1)}\}$,
the loop soup $\Lc^{1/2}_{\D}$ cannot contain a Brownian loop $\wp$
with $\Range(\wp)\subset e^{-1}\D\setminus(e^{-(n-1)}\overline{\D})$
that surrounds $e^{-(n-1)}\overline{\D}$.
Indeed, such a loop would have to intersect $\Cc_{n-1}$,
thus be contained in $\Cc_{n-1}$,
thus surround $\Cc_{n}$, and thus prevent $\Cc_{n}$ from intersecting $A$,
which is a contradiction.
So, by Lemma \ref{L:surround}, there is a constant $c>0$,
such that for every $n\geq 3$,
$
\Prob{\Cc_{n}\cap A\neq\emptyset}
\leq e^{-c(n-2)} + u^{n-1}.
$
This ensures that $\Sigma<+\infty$.

We now turn to the proof of \eqref{E:pf_crossing_goal}. We will first establish the lower bound. 
Let $n\geq 1$. 
The conjunction of the following conditions is sufficient to ensure that 
$n_{\rm cross}(r) = n$:
\[
d(0,\Cc_{n-1})>r, \quad \partial_{o}\Cc_{n}\cap A\neq \emptyset \quad \text{and} \quad \CR_{o}(n)/\CR_{i}(n)\geq r^{-1}.
\]
For $n=1$ the first condition is irrelevant.
By using the independence of $\CR_{o}(n)/\CR_{i}(n)$ from
$(\Cc_{1},\dots,\Cc_{n-1},\partial_{o}\Cc_{n})$,
we get
\begin{eqnarray*}
\Prob{n_{\rm cross}(r) = n}
&\geq &\Prob{\partial_{o}\Cc_{n}\cap A\neq \emptyset, d(0,\Cc_{n-1})>r}
\Prob{\CR_{o}(n)/\CR_{i}(n)\geq r^{-1}}
\\& = &
\Prob{\partial_{o}\Cc_{n}\cap A\neq \emptyset, d(0,\Cc_{n-1})>r}
\Prob{T_{-\pi}\geq\vert\log r\vert}
\\& = &
\Prob{\Cc_{n}\cap A\neq \emptyset, d(0,\Cc_{n-1})>r}
\Prob{T_{-\pi}\geq\vert\log r\vert}.
\end{eqnarray*}
Thus,
\[
\Prob{n_{\rm cross}(r) <+\infty}
\geq \Prob{T_{-\pi}\geq\vert\log r\vert}
\sum_{n\geq 1}\Prob{\Cc_{n}\cap A\neq \emptyset, d(0,\Cc_{n-1})>r}.
\]
Further, as $r\to 0$,
\[
\Prob{T_{-\pi}\geq\vert\log r\vert}\sim
\sqrt{2\pi} \vert\log r\vert^{-1/2},
\qquad
\lim_{r\to 0}
\sum_{n\geq 1}\Prob{\Cc_{n}\cap A\neq \emptyset, d(0,\Cc_{n-1})>r}
=\Sigma,
\]
concluding the proof of the lower bound of \eqref{E:pf_crossing_goal}.

Let us now derive the upper bound of \eqref{E:pf_crossing_goal}.
Fix $v\in (0,1)$.
$\Prob{n_{\rm cross}(r) <+\infty}$ is equal to
\begin{align}
\nonumber
& \Prob{n_{\rm cross}(r) <+\infty,\CR_{i}(n_{\rm cross}(r))^{-1}\geq (4 r)^{-1}}
\\
\nonumber
&\leq 
\Prob{n_{\rm cross}(r) <+\infty,\CR_{o}(n_{\rm cross}(r))^{-1}\geq (4 r)^{-(1-v)}}
\\ \nonumber & +\Prob{n_{\rm cross}(r) <+\infty,
\CR_{o}(n_{\rm cross}(r))/\CR_{i}(n_{\rm cross}(r))\geq (4 r)^{-v}}
\\
\label{E:pf_crossing_99}
&\leq 
\Prob{n_{\rm cross}(r) <+\infty, 
d(0,\partial_{o}\Cc_{n_{\rm cross}(r)})\leq (4 r)^{1-v}}
\\
\label{E:pf_crossing_999} & +\Prob{n_{\rm cross}(r) <+\infty,
\CR_{o}(n_{\rm cross}(r))/\CR_{i}(n_{\rm cross}(r))\geq (4 r)^{-v}}
\end{align}
On the event $\{n_{\rm cross}(r) <+\infty, 
d(0,\partial_{o}\Cc_{n_{\rm cross}(r)})\leq (4 r)^{1-v}\}$,
the loop soup $\Lc^{1/2}_{\D}$ cannot contain a Brownian loop $\wp$
with
$\Range(\wp)\subset e^{-1}\D\setminus((4 r)^{1-v}\overline{\D})$
that surrounds $(4 r)^{1-v}\overline{\D}$.
Indeed, then $\wp$ would have to intersect $\partial_{o}\Cc_{n_{\rm cross}(r)}$,
which is impossible by construction.
Thus, by Lemma \ref{L:surround}, there is a constant $c>0$ such that for every
$r$ small enough, \eqref{E:pf_crossing_99} is at most $e^{c} (4 r)^{c(1-v)}$. On the other hand, \eqref{E:pf_crossing_999} is at most
\begin{align*}
& \sum_{n\geq 1}
\Prob{\partial_{o}\Cc_{n}\cap A\neq \emptyset,
\CR_{o}(n)/\CR_{i}(n)\geq (4 r)^{-v}}, 
\\
& = 
\sum_{n\geq 1}
\Prob{\partial_{o}\Cc_{n}\cap A\neq \emptyset}
\Prob{\CR_{o}(n)/\CR_{i}(n)\geq (4 r)^{-v}}
=
\Sigma
\Prob{T_{-\pi}\geq v\log ((4 r)^{-1})}. 
\end{align*}
Therefore,
\begin{displaymath}
\limsup_{r\to 0} \vert\log r\vert^{1/2}\Prob{n_{\rm cross}(r) <+\infty}
\leq \sqrt{2\pi}\Sigma v^{-1/2}.
\end{displaymath}
By letting $v\to 1$, we get the desired result.
\end{proof}

In particular, this provides a first rough upper bound on the probability of crossing for any $\theta \leq 1/2$:

\begin{corollary}[A priori upper bound]\label{C:a_priori_u_bound}
For all $\theta \leq 1/2$ and $s \geq 1$,
\begin{equation}
\label{E:L_a_priori_u_bound}
\limsup_{\eps \to 0} \P \Big( \eps \overset{\Lc_\D^\theta}{\longleftrightarrow} \eps^s \Big) \leq s^{-1/2}.
\end{equation}
\end{corollary}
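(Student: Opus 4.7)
The plan is a two-step argument: monotonicity in $\theta$ to reduce to the critical case $\theta = 1/2$, followed by a direct Brownian motion computation built on the CLE$_4$/GFF framework used in the proof of Lemma \ref{L:theta=1/2}.

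First, by Poisson thinning $\Lc_\D^\theta$ can be coupled as a subset of $\Lc_\D^{1/2}$ whenever $\theta \leq 1/2$, and the crossing event is increasing, so it suffices to prove $\limsup_{\eps \to 0} \P(\eps \overset{\Lc_\D^{1/2}}{\longleftrightarrow} \eps^s) \leq s^{-1/2}$.

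At $\theta = 1/2$, I would set $u = |\log \eps|$ and work with the nested surrounding clusters $(\Cc_n)_{n\geq 1}$, writing $X_n = \log(1/\CR_o(n))$ and $Y_n = \log(1/\CR_i(n))$. The proof of Lemma \ref{L:theta=1/2} provides the structural input: $Y_n - X_n$ is independent of $(\Cc_1,\dots,\Cc_{n-1},\partial_o\Cc_n)$ with law $T_{-\pi}$, the nesting forces $X_n < Y_n < X_{n+1}$, and the underlying Brownian identification yields $\E[\#\{n : X_n \leq x\}] \sim \sqrt{x/(2\pi^3)}$ as $x \to \infty$ (via $\E[M_x] = \sqrt{2x/\pi}$ for the running minimum of standard BM). Koebe's quarter theorem shows that $\Cc_n$ meets both $\eps\partial\D$ and $\eps^s\partial\D$ exactly when $X_n \leq u$ and $Y_n \geq su$, up to $O(1)$ corrections. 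The disjointness of these events across $n$, together with conditioning on $X_n$ and the independence of $Y_n-X_n$, gives
\[
\sum_n \P(X_n \leq u,\, Y_n \geq su) \;=\; \int_0^u \rho(x)\, \P(T_{-\pi} \geq su - x)\, \d x,
\]
with $\rho(x) \sim (2\sqrt{2\pi^3 x})^{-1}$ and $\P(T_{-\pi} \geq t) \sim \sqrt{2\pi/t}$. The substitution $x = su\sin^2\phi$ reduces the right-hand side to $\frac{1}{\pi}\arcsin(1/\sqrt{s})$ in the limit $u\to\infty$, and the elementary inequality $\arcsin(y) \leq \pi y/2$ for $y \in [0,1]$ bounds the surrounding contribution by $\frac{1}{2} s^{-1/2}$.

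It remains to control the clusters that do not surround $0$. For such a cluster to link $\eps\partial\D$ with $\eps^s\partial\D$, no Brownian loop in $\Lc_\D^{1/2}$ can surround $\eps^s\D$ from within certain intermediate annuli, and combining Lemma \ref{L:surround} at dyadic scales with the same BM analysis applied to the opposite parity of level crossings should yield the complementary $\frac{1}{2} s^{-1/2}$ bound; summing the two contributions then gives $s^{-1/2}$. This non-surrounding contribution is the main obstacle: it does not fit directly into the clean nested-BM picture available for the surrounding clusters, so implementing the symmetry/parity argument carefully -- ideally without appealing to the 1D/2D loop soup correspondence of Section \ref{S:intro_outline}, which is only developed later in the paper -- is where the most care is required.
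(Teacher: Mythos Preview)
Your reduction to $\theta=1/2$ by monotonicity is fine, but from there your route diverges sharply from the paper and runs into a real problem.

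The paper's proof is five lines: it uses Lemma~\ref{L:theta=1/2} as a black box. By FKG and a surrounding-loop welding,
\[
\P\big(e^{-1}\overset{\Lc_\D^{1/2}}{\longleftrightarrow}\eps^s\big)
\;\geq\;
\P\big(e^{-1}\overset{\Lc_\D^{1/2}}{\longleftrightarrow}\eta\eps\big)\,
\P\big(\eps\overset{\Lc_\D^{1/2}}{\longleftrightarrow}\eps^s\big)\,
\P\big(\exists\wp\in\Lc_\D^{1/2}\text{ surrounding }\eta\D\big),
\]
and since both macroscopic crossing probabilities behave like $c|\log r|^{-1/2}$, rearranging and sending $\eta\to 0$ gives the bound $s^{-1/2}$ directly. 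There is no need to revisit the CLE$_4$/Brownian motion analysis.

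Your direct computation has two concrete issues. First, the non-surrounding contribution is not $\tfrac{1}{2}s^{-1/2}$: by exactly the argument at the start of the proof of Lemma~\ref{L:theta=1/2} (a non-surrounding crossing of $\eps\D\setminus\eps^s\D$ forbids any loop in that annulus from surrounding $\eps^s\D$), Lemma~\ref{L:surround} gives a bound of order $\eps^{c(s-1)}=o(1)$. So there is no ``parity'' to recover.

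Second, and more seriously, your characterisation ``$\Cc_n$ meets both circles iff $X_n\leq u$ and $Y_n\geq su$ up to $O(1)$'' is not an upper bound for the crossing event. Koebe controls only $d(0,\partial_o\Cc_n)=\min_{z\in\partial_o\Cc_n}|z|$; the outer boundary can have $\CR_o(n)\ll\eps$ (so $X_n\gg u$) while still extending well beyond $\eps\partial\D$, and then $\Cc_n$ may perfectly well touch $\eps\partial\D$. Your condition therefore \emph{undercounts} crossing clusters, and what you compute is not an upper bound on the probability. This is visible numerically: your surrounding term $\tfrac{1}{\pi}\arcsin(1/\sqrt{s})$ is exactly half of the true limit $f_\infty(s)=\tfrac{2}{\pi}\arcsin(1/\sqrt{s})$ at $\theta=1/2$ (Theorem~\ref{T:Bessel}), so it cannot be dominating the crossing probability. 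The missing half does not come from non-surrounding clusters; it comes from surrounding clusters with $X_n>u$ whose outer boundaries nonetheless reach $\eps\partial\D$.
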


\begin{proof}
By monotonicity of the left hand side of \eqref{E:L_a_priori_u_bound} w.r.t. $\theta$, we can assume that $\theta =1/2$. Let $\eta >0$ be a small parameter and let $s > 1$ (the case $s=1$ is trivial).
Conditioned on $\{ e^{-1} \overset{\Lc_\D^{\theta}}{\longleftrightarrow} \eta \eps \}$ and $\{ \eps \overset{\Lc_\D^{\theta}}{\longleftrightarrow} \eps^s \}$, to have a cluster crossing the annulus $e^{-1} \D \setminus \eps^s \D$, it is enough to have a loop included in $\eps \D$ that surrounds the disc $\eta \eps \D$. By FKG inequality and scale invariance, we obtain that
\begin{align*}
\P \Big( e^{-1} \overset{\Lc_\D^{\theta}}{\longleftrightarrow} \eps^s \Big)
\geq \P \Big( e^{-1} \overset{\Lc_\D^{\theta}}{\longleftrightarrow} \eta \eps \Big) \P \Big( \eps \overset{\Lc_\D^{\theta}}{\longleftrightarrow} \eps^s \Big) \Prob{ \exists \wp \in \Lc_\D^\theta \text{~surrounding~} \eta \D }.
\end{align*}
By Lemma \ref{L:theta=1/2}, this leads to
\[
\limsup_{\eps \to 0} \P \Big( \eps \overset{\Lc_\D^\theta}{\longleftrightarrow} \eps^s \Big)
\leq s^{-1/2} \Prob{ \exists \wp \in \Lc_\D^\theta \text{~surrounding~} \eta \D}^{-1}.
\]
As $\eta \to 0$, the probability on the right hand side converges to 1. This concludes the proof.
\end{proof}

\subsection{Generalisation of Theorem \ref{T:large_crossing}}\label{SS:generalisation}

We conclude this preliminary section by showing that Corollary \ref{C:generalisation} is a quick consequence of Theorem \ref{T:large_crossing} and FKG inequality.

\begin{proof}[Proof of Corollary \ref{C:generalisation}, assuming Theorem \ref{T:large_crossing}]
Let $R_1, R_2 >0$ be such that $D(x,R_1) \subset D \subset D(x,R_2)$ and let $d$ be the distance between $x$ and $A$. The probability that there is a cluster $\Cc$ of $\Lc_D^\theta$ such that $\overline{\Cc} \cap A \neq \varnothing$ and $\Cc \cap D(x,r) \neq \varnothing$ is at most
\begin{align*}
\P \Big( \partial D(x,r) \overset{\Lc_{D(x,R_2)}^\theta}{\longleftrightarrow} \partial D(x,d) \Big)
= \P \Big( \frac{r}{R_2} \partial \D \overset{\Lc_\D^\theta}{\longleftrightarrow} \frac{d}{R_2} \partial \D \Big)
\end{align*}
by invariance under scaling and translation. Since $A$ is closed, $d$ is positive and Theorem \ref{T:large_crossing} yields the upper bound stated in Corollary \ref{C:generalisation}. A similar argument shows that if $r_0 \in (0,R_1)$ is fixed, then
\[
\P \Big( \partial D(x,r) \overset{\Lc_D^\theta}{\longleftrightarrow} \partial D(x,r_0) \Big) = |\log r|^{-1+ \theta +o(1)}.
\]
Moreover, because $A$ is not polar, the probability that there is a cluster in $\Lc_D^\theta$ whose closure hits $A$ and that disconnects $\partial D(x,r_0/2)$ and $\partial D(x,r_0)$ is positive. On the intersection of this event and the event $\{ \partial D(x,r) \overset{\Lc_D^\theta}{\longleftrightarrow} \partial D(x,r_0) \}$, there must be a cluster whose closure joins $D(x,r)$ to $A$. By FKG inequality (see Lemma \ref{L:FKG}), the probability of the intersection of these two events is at least the product of the probabilities. This shows that
\[
\P \Big( \exists ~\Cc \text{~cluster~of~} \Lc_D^\theta: \overline{\Cc} \cap A \neq \varnothing \text{~and~} \Cc \cap D(x,r) \neq \varnothing \Big) \geq |\log r|^{-1+ \theta +o(1)}
\]
as claimed.
\end{proof}

\section{One-arm event in the Brownian loop soup}\label{S:crossing}

The goal of this section is to prove Theorem \ref{T:large_crossing}. We start by introducing a few notations and state a stronger form of Theorem \ref{T:intro_convergence_crossing}.
We will denote by
\[
p_{\eps, \delta} = \Prob{ \eps \partial \D \overset{\Lc_\D^\theta}{\longleftrightarrow}  \delta \partial \D }.
\]
Let
\begin{equation}
\label{E:setI}
I_\leq^* := \{ (s_1,s_2) \in (0,\infty)^2: s_1 \leq s_2 \}
\end{equation}
and for all $\delta>0$, let $F_\delta$ be the following function
\[
F_\delta : (s_1,s_2) \in I_\leq^* \mapsto p_{\delta^{s_1}, \delta^{s_2}} \in [0,1].
\]
We will show in this section that:

\begin{theorem}\label{T:convergence_crossing}
$F_\delta$ converges as $\delta \to 0$ to some function $F_\infty$. The convergence is uniform on each compact subset of $I_\leq^*$.
\end{theorem}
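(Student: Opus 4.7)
The plan is to reduce the two-parameter convergence of $F_\delta$ to a one-parameter problem via a scaling identity, extract subsequential limits by monotonicity, identify them using the integral equation of Theorem~\ref{T:integral}, and finally upgrade pointwise convergence to uniform convergence on compact subsets.

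\emph{Reduction to one parameter.} The first step rests on the tautological identity
\[
F_\delta(s_1, s_2) = p_{\delta^{s_1}, \delta^{s_2}} = p_{\delta', (\delta')^{s_2/s_1}} = F_{\delta'}(1, s_2/s_1), \quad \delta' := \delta^{s_1},
\]
which follows directly from the definition of $F_\delta$. Since $\delta^{s_1} \to 0$ as $\delta \to 0$, uniformly in $s_1$ on any compact subset of $(0, \infty)$, convergence of the one-parameter family $F_\delta(1, \cdot)$ (uniformly on compact subsets of $[1, \infty)$) implies convergence of $F_\delta$ on every compact subset of $I_\leq^*$, with candidate limit $F_\infty(s_1, s_2) := f_\infty(s_2/s_1)$ where $f_\infty(s) := \lim_{\delta \to 0} F_\delta(1, s)$. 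So it suffices to prove the convergence of $F_\delta(1, \cdot)$.

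\emph{Subsequential limits and the integral equation.} For each $\delta$, the function $s \mapsto F_\delta(1, s)$ is nonincreasing on $[1, \infty)$ and takes values in $[0, 1]$, so Helly's selection theorem yields a subsequence $\delta_n \to 0$ along which $F_{\delta_n}(1, \cdot)$ converges pointwise (at every continuity point of the limit) to a nonincreasing $f: [1, \infty) \to [0, 1]$. The a priori bound of Corollary~\ref{C:a_priori_u_bound} gives $f(s) \leq s^{-1/2}$, placing $f$ in the admissible class $\Fc_\alpha$ required for uniqueness in Theorem~\ref{T:integral}. The core probabilistic step is then to show that any such $f$ satisfies the integral equation \eqref{E:T_fixed_point}. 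This will come from exploring the outermost layer of loops of the cluster realising the crossing of $\delta \D \setminus \delta^s \D$: the term $1 - (1 - 1/s)^\theta$ accounts for crossings made by a single loop, while the integral term encodes the mixed case in which the first layer crosses down only to an intermediate scale $\delta^{1+(s-1)/t}$, after which an independent cluster staying inside $\delta \D$ completes the crossing, contributing the factor $f(t)$. Once \eqref{E:T_fixed_point} is established for any subsequential limit, the uniqueness part of Theorem~\ref{T:integral} forces $f = f_\infty$, so $F_\delta(1, \cdot) \to f_\infty$ pointwise.

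\emph{Uniform convergence and main obstacle.} To upgrade pointwise convergence to uniform convergence on any compact $K \subset I_\leq^*$, I use that each $F_\delta$ is nondecreasing in $s_1$ and nonincreasing in $s_2$, and that the limit $F_\infty(s_1, s_2) = f_\infty(s_2/s_1)$ is continuous by the explicit formula \eqref{E:f_infty}. A classical Polya-type argument for monotone functions converging pointwise to a continuous limit then gives uniform convergence on $K$: any potential failure would produce a sequence $(s_{1,n}, s_{2,n}) \to (s_1^*, s_2^*) \in K$ on which the discrepancy stays bounded away from zero, but monotonicity of $F_{\delta_n}$ sandwiches $F_{\delta_n}(s_{1,n}, s_{2,n})$ between values at nearby rational points which, by pointwise convergence and continuity of $F_\infty$, both converge to $F_\infty(s_1^*, s_2^*)$. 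The main obstacle is the derivation of the integral equation for any subsequential limit: the first-layer decomposition must cleanly separate the contribution of the outer loops from that of the inner crossing, with error terms that vanish as $\delta \to 0$; this will rely on the quantitative loop-measure estimates of Lemma~\ref{L:loopmeasure_annulus_quantitative} and on Lemma~\ref{L:surround} to ensure the outer layer and the inner cluster weld into a single cluster with probability tending to $1$.
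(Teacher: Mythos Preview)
Your overall architecture---extract subsequential limits, show they satisfy the integral equation, invoke uniqueness, upgrade to uniform convergence---matches the paper's. The difference is in the compactness step: the paper proves a quantitative equicontinuity estimate for the two-parameter family $(F_\delta)$ (Lemma~\ref{L:crossing_tightness}) and applies Arzel\`a--Ascoli, whereas you use monotonicity in $s$ and Helly's selection in one parameter, and then a P\'olya-type argument at the end.

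There is, however, a genuine gap in your passage to the limit in the integral equation. The first-layer decomposition (Lemma~\ref{L:crossing_equation_sub}) expresses $F_\delta(1,s)$ in terms of quantities like
\[
p_{(\delta^{s-1+u})^{1/t},\,\delta^{s-1+u}} \;=\; F_\delta\Big(\tfrac{s-1+u}{t},\, s-1+u\Big) \;=\; F_{\delta^{(s-1+u)/t}}(1,t).
\]
Along your Helly subsequence $\delta_n$, the inner crossing runs along the \emph{different} sequence $\delta_n^{(s-1+u)/t}$, and Helly gives you no control there. This is precisely why the paper stays in two parameters and proves equicontinuity: uniform convergence of $F_{\delta_n}$ on compacts of $I_\leq^*$ lets one pass to the limit at \emph{every} point $((s-1+u)/t,\,s-1+u)$ simultaneously, which is what Corollary~\ref{C:crossing} uses.

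Your route can be repaired, but not as written. One clean fix bypasses Helly entirely: set $\bar f(s)=\limsup_{\delta\to 0}F_\delta(1,s)$ and $\underline f(s)=\liminf_{\delta\to 0}F_\delta(1,s)$, observe that $\limsup_{\delta\to 0}F_{\delta^c}(1,t)=\bar f(t)$ for every $c>0$, and use reverse Fatou / Fatou in Lemma~\ref{L:crossing_equation_sub} (then $u\to 0$) to obtain $\bar f\le T\bar f$ and $\underline f\ge T\underline f$. Since $T$ is monotone and a contraction on $\Fc_\alpha$ (Lemma~\ref{L:fixed-points}), iterating gives $\bar f\le f_\infty\le\underline f$, hence equality. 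A second fix is to run Helly in two parameters (monotonicity in each coordinate), which gives a.e.\ convergence of $F_{\delta_n}$ on $I_\leq^*$ and suffices for the integral term by dominated convergence. Either way, the monotonicity shortcut does not entirely replace the role played by the paper's equicontinuity lemma; some extra argument is needed at exactly the step you flagged as the main obstacle.

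Two minor circularity issues: you cite Theorem~\ref{T:integral} for uniqueness and the explicit formula \eqref{E:f_infty} for continuity of $f_\infty$, but both are established \emph{after} Theorem~\ref{T:convergence_crossing}. These are harmless---cite Lemma~\ref{L:fixed-points} directly for uniqueness, and note that $T$ maps bounded functions to continuous ones, so any fixed point is automatically continuous.
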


As a consequence of this convergence, the limiting function $F_\infty$ is actually only a function of the ratio $s_2/s_1$, i.e. for all $(s_1,s_2) \in I_\leq$, $F_\infty(s_1,s_2) = F_\infty(1,s_2/s_1)$. The function $f_\infty$ appearing in Theorem \ref{T:intro_convergence_crossing} is then simply given by: $f_\infty(s) = F_\infty(1,s)$ for all $s \geq 1$.

\medskip

The proofs of Theorems \ref{T:large_crossing} and \ref{T:convergence_crossing} are then divided as follows:
\begin{itemize}
\item
Section \ref{S:crossing_tightness}: 
the sequence $(F_\delta)_{\delta >0}$ is tight;
\item
Section \ref{S:crossing_integral}: any subsequential limit of $(F_\delta)_{\delta >0}$ satisfies some integral equation;
\item
Section \ref{S:fixed-point}: uniqueness of the fixed-points of this integral equation. This will finish the proof of Theorem \ref{T:convergence_crossing};
\item
Section \ref{S:Bessel}: identification of $f_\infty$ in terms of Bessel processes as stated in Theorem \ref{T:Bessel}. This will in particular nail down the decay of $f_\infty(s)$ as $s \to \infty$;
\item
Section \ref{S:large_crossing}: proof of Theorem \ref{T:large_crossing} by relating the decay of the probability of large crossings to the decay of $f_\infty$ at infinity. 
\end{itemize}

\subsection{Tightness}\label{S:crossing_tightness}

Recall the definition \eqref{E:setI} of $I_\leq^*$.

\begin{lemma}[Tightness]\label{L:crossing_tightness}
For every decreasing sequence $(\delta_n)_{n \geq 1}$ converging to zero, we can extract a subsequence $(\delta_{n_k})_{k \geq 1}$ such that $(F_{\delta_{n_k}})_{k \geq 1}$ converges uniformly on every compact subsets of $I_\leq^*$ towards a continuous function.
\end{lemma}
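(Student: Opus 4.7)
The strategy combines three ingredients: monotonicity (for pointwise extraction via Helly), uniform equicontinuity (via FKG and surrounding loops), and Arzel\`a-Ascoli.

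\textbf{Monotonicity and pointwise extraction.} A cluster of $\Lc_\D^\theta$, being a union of continuous Brownian loops glued at intersection points, is path-connected, hence so is its closure, and the map $z \mapsto |z|$ takes every intermediate value on it. Consequently, for $s_1 \leq s_2$ in $I_\leq^*$ and $\eta \geq 0$, any cluster touching both $\delta^{s_1-\eta}\partial\D$ and $\delta^{s_2+\eta}\partial\D$ also touches $\delta^{s_1}\partial\D$ and $\delta^{s_2}\partial\D$, so that
\[
\{\delta^{s_1-\eta}\partial\D \overset{\Lc_\D^\theta}{\longleftrightarrow} \delta^{s_2+\eta}\partial\D\} \subseteq \{\delta^{s_1}\partial\D \overset{\Lc_\D^\theta}{\longleftrightarrow} \delta^{s_2}\partial\D\}.
\]
Thus $F_\delta$ is non-decreasing in $s_1$ and non-increasing in $s_2$ on $I_\leq^*$. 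Fixing a countable dense subset $Q \subset I_\leq^*$ and diagonalising, we extract a subsequence $(\delta_{n_k})$ along which $F_{\delta_{n_k}}$ converges pointwise on $Q$; the limit $F_\infty$ extends to all of $I_\leq^*$ by monotonicity.

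\textbf{Equicontinuity and Arzel\`a-Ascoli.} The core task is to produce, for each compact $K \subset I_\leq^*$, a modulus $\rho_K$ with $\rho_K(\eta) \to 0$ as $\eta \to 0$ such that
\[
0 \leq F_\delta(s_1, s_2) - F_\delta(s_1-\eta, s_2+\eta) \leq \rho_K(\eta)
\]
uniformly in $(s_1, s_2) \in K$ with $(s_1-\eta, s_2+\eta) \in I_\leq^*$ and in sufficiently small $\delta$. The lower bound is automatic. For the upper bound, FKG (Lemma~\ref{L:FKG}) reduces the task to exhibiting two increasing events $B_{\rm out}, B_{\rm in}$, measurable with respect to the loops in the outer and inner thin annuli $\delta^{s_1-\eta}\D \setminus \delta^{s_1}\D$ and $\delta^{s_2}\D \setminus \delta^{s_2+\eta}\D$, each of probability at least $1 - \rho(\eta)$ uniformly in $\delta$ small, and such that on $\{\delta^{s_1}\partial\D \overset{\Lc_\D^\theta}{\longleftrightarrow} \delta^{s_2}\partial\D\} \cap B_{\rm out} \cap B_{\rm in}$ the crossing cluster is forcibly extended to reach $\delta^{s_1-\eta}\partial\D$ and $\delta^{s_2+\eta}\partial\D$. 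A natural construction for $B_{\rm out}$ combines a surrounding loop of $\Lc_\D^\theta$ in $\delta^{s_1-\eta/2}\D \setminus \delta^{s_1+\eta/2}\D$ encircling $\delta^{s_1+\eta/2}\D$ (which by the Jordan curve theorem, together with the fact that the crossing cluster also crosses the sub-annulus $\delta^{s_1}\D \setminus \delta^{s_1+\eta/2}\D$ by the IVT argument above, is forced to intersect the crossing cluster) with a further loop carrying the extension out to $\delta^{s_1-\eta}\partial\D$; analogously for $B_{\rm in}$. The probabilities of these increasing sub-events are controlled via Lemmas~\ref{L:surround} and~\ref{L:loopmeasure_annulus_quantitative}. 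Once equicontinuity is established, Arzel\`a-Ascoli yields relative compactness of $(F_\delta)$ in $C(K)$; combined with the pointwise convergence on $Q$ from Step~1, a final diagonal extraction produces a subsequence converging uniformly on every compact subset of $I_\leq^*$, and the limit is continuous as a uniform limit of continuous functions.

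\textbf{Main obstacle.} The delicate point is the construction of $B_{\rm out}$ and $B_{\rm in}$ in Step~2: the events must be simultaneously (a) increasing, so that FKG applies, (b) of probability close to $1$ uniformly in $\delta$, and (c) able to \emph{merge} with an arbitrary crossing cluster rather than produce a separate parallel cluster, so that the extended cluster genuinely spans the wider annulus. Ensuring (c) is the subtle issue: a cluster touching $\delta^{s_1}\partial\D$ need not extend deeply into any particular sub-annulus, so a generic exterior loop is not guaranteed to intersect it; the topological role of surrounding loops, together with the quantitative loop-measure estimates of Lemmas~\ref{L:surround} and~\ref{L:loopmeasure_annulus_quantitative}, are the key tools for meeting these three requirements simultaneously.
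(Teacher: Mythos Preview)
Your approach matches the paper's: establish equicontinuity via FKG together with surrounding and crossing loops (Lemmas~\ref{L:surround} and~\ref{L:loopmeasure_annulus_quantitative}), then apply a version of Arzel\`a--Ascoli. Your Step~1 (Helly extraction via monotonicity) is redundant once Step~2 is in place, but harmless.

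There is, however, a concrete error in your construction of $B_{\rm out}$. You place the surrounding loop in $\delta^{s_1-\eta/2}\D \setminus \delta^{s_1+\eta/2}\D$, an annulus that straddles $\delta^{s_1}\partial\D$. Nothing prevents such a loop from lying entirely in the outer half $\delta^{s_1-\eta/2}\D \setminus \delta^{s_1}\overline{\D}$; it would then still surround $\delta^{s_1+\eta/2}\D$, but the bounded component of its complement would contain all of $\delta^{s_1}\overline{\D}$, and the crossing cluster---which only touches $\delta^{s_1}\partial\D$ and runs inward to $\delta^{s_2}\partial\D$---could sit entirely inside that component without meeting the loop. Your Jordan-curve claim fails because you have not guaranteed that the crossing cluster reaches the \emph{unbounded} complementary component of the surrounding loop. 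The paper's fix (adapted to your notation) is to place the surrounding loop strictly inside $\delta^{s_1}\D$, say in $\delta^{s_1}\D \setminus \delta^{s_1+\eta}\D$: then $\delta^{s_1}\partial\D$ lies in the unbounded complementary component while $\delta^{s_2}\partial\D$ lies in the bounded one, forcing the intersection. The extension loop should then cross all the way from $\delta^{s_1-\eta}\partial\D$ into $\delta^{s_1+\eta}\D$, so that it too is forced to meet the surrounding loop.
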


\begin{proof}[Proof of Lemma \ref{L:crossing_tightness}]
It is enough to prove the following uniform equicontinuity property: there exist $C,c>0$ such that for all $\delta >0$, for all $(s_1,t_1), (s_2,t_2) \in I_\leq^*$,
\begin{equation}
\label{E:proof_tightness1}
|F_\delta(s_1,t_1) - F_\delta(s_2,t_2)| \leq C \frac{|s_2-s_1|^c + |t_2-t_1|^c}{\max(s_1,s_2)^\theta} + C \frac{1}{|\log \delta|^c}.
\end{equation}
Indeed, let $(\delta_n)_{n \geq 1}$ be a decreasing sequence converging to zero. By a slight variant of the Arzel\`{a}--Ascoli theorem (see e.g. \cite[Theorem 6.2]{MR3474488}), the above equicontinuity property and the fact that $(F_{\delta_n})_{n \geq 1}$ is bounded ($0 \leq F_\delta \leq 1$ for all $\delta$) imply that for every compact subset of $I_\leq^+$, we can extract a subsequence that converges uniformly on that compact set to a continuous function. We then obtain the result stated in Lemma \ref{L:crossing_tightness} by a diagonalisation argument (so that the convergence is uniform on every compact subset simultaneously).

It remains to prove the equicontinuity property \eqref{E:proof_tightness1}. Let $\delta >0$ and $(s_1,t_1), (s_2,t_2) \in I_\leq^*$. By symmetry, we can assume that $s_1 \leq s_2$. By the triangle inequality,
\begin{equation}
\label{E:proof_tightness}
|F_\delta(s_1,t_1) - F_\delta(s_2,t_2)|
\leq |F_\delta(s_1,t_1) - F_\delta(s_1,t_2)| + |F_\delta(s_1,t_2) - F_\delta(s_2,t_2)|.
\end{equation}
Let us start by bounding the last term on the right hand side. Because $s_1 \leq s_2$, 
$F_\delta(s_1,t_2) \leq F_\delta(s_2,t_2)$. 
Obtaining an inequality in the other direction amounts to bounding from below the probability to cross $\delta^{s_1} \D \setminus \delta^{t_2} \D$ conditioned on the fact that there is a crossing of $\delta^{s_2} \D \setminus \delta^{t_2} \D$. Let $\eta>0$ be a small parameter that we fix later. Conditionally on the smaller crossing, there will be a larger crossing as soon as the following two events hold:
\[
\{ \exists \wp \in \Lc_\D^\theta \text{~crossing~} \delta^{s_1} \D \setminus \delta^{s_2 + \eta} \D \}
\quad \text{and} \quad
\{ \exists \wp \in \Lc_\D^\theta \text{~surrounding~} \delta^{s_2 + \eta} \D \text{~while~staying~in~} \delta^{s_2} \D\}.
\]
By FKG inequality we obtain that $F_\delta(s_1,t_2) / F_\delta(s_2,t_2)$ is at least the product of the probabilities of each of the two above events. By \eqref{E:L_loopmeasure_annulus_quantitative1}, there exists $C>0$ such that the probability of the first event is at least $1 - C (s_2-s_1+\eta)^\theta/s_2^\theta$. By Lemma \ref{L:surround}, the probability of the second event is at least $1- \delta^{c\eta}$ for some $c>0$, giving that
\begin{align*}
F_\delta(s_1,t_2) \geq F_\delta(s_2,t_2) ( 1- \delta^{c\eta}) \left(1- C \frac{(s_2-s_1+\eta)^\theta}{s_2^\theta} \right).
\end{align*}
We choose
\[
\eta = \left\{ \begin{array}{ll}
\frac{|\log (s_2-s_1)|}{|\log \delta|} & \text{if}~ \frac{s_2-s_1}{|\log (s_2-s_1)|} > |\log \delta|, \\
\frac{\log |\log \delta|}{|\log \delta|} & \text{if}~ \frac{s_2-s_1}{|\log (s_2-s_1)|} \leq |\log \delta|.
\end{array} \right.
\]
This choice leads to the following estimate:
\[
\abs{F_\delta(s_1,t_2) - F_\delta(s_2,t_2) } \leq
\left\{ \begin{array}{ll}
C \frac{(s_2-s_1)^c}{s_2^\theta} & \text{if}~ \frac{s_2-s_1}{|\log (s_2-s_1)|} > |\log \delta|, \\
C |\log \delta|^{-c} & \text{if}~ \frac{s_2-s_1}{|\log (s_2-s_1)|} \leq |\log \delta|,
\end{array} \right.
\]
for some $C,c>0$. In particular,
\[
\abs{F_\delta(s_1,t_2) - F_\delta(s_2,t_2) } \leq C \frac{(s_2-s_1)^c}{s_2^\theta} + C |\log \delta|^{-c}
\]
for all $s_1 \leq s_2$.
The first term on the right hand side of \eqref{E:proof_tightness} can be bounded in a similar fashion:
\[
|F_\delta(s_1,t_1) - F_\delta(s_1,t_2)| \leq C \frac{|t_2-t_1|^c}{\max(t_1,t_2)^\eta} + C |\log \delta|^{-c}.
\]
Putting things together, we obtain \eqref{E:proof_tightness1}. This concludes the proof of Lemma \ref{L:crossing_tightness}.
\end{proof}

\subsection{Integral equation}\label{S:crossing_integral}

\begin{lemma}\label{L:crossing_equation_sub}
Let $u \in (0,1)$. For any $\delta \in (0,1)$ and $s \geq 1$,
\begin{align*}
p_{\delta,\delta^s}
& = 1 - \left( \frac{s-1+u}{s} \right)^\theta + \frac{O(1)}{s \log \delta}
+ \left( u^\theta + \frac{O(1)}{\log \delta} \right) p_{\delta^u, \delta^{s-1+u}} + \left( 1 + O(1)\frac{(\log |\log \delta|)^C}{|\log \delta|^\theta} \right) \times \\
& \times \theta (1-u) (s-1+u)^\theta
\int_1^{1 + \frac{s-1}{u}} (s + (1-u)(t-1))^{-\theta-1} p_{(\delta^{s-1+u})^{1/t},\delta^{s-1+u}} \d t.
\end{align*}
The $O(1)$ appearing above stands for a quantity uniformly bounded with respect to $\delta$ and $s$ (it may depend on $u$).
\end{lemma}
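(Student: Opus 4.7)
The plan is to prove Lemma \ref{L:crossing_equation_sub} by an exploration-from-outside argument that generalises the informal decomposition sketched after Theorem \ref{T:integral} (which is the $u=0$ case of the present lemma). The parameter $u\in(0,1)$ will later allow us to separate distinct contributions when establishing uniqueness of fixed points in Section \ref{S:fixed-point}.

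\textbf{Setup.} I would use the Poissonian restriction property of $\Lc_\D^\theta$ to split it into two independent collections: $\Lc^{\mathrm{out}}$ of loops not contained in $\delta^u\D$, and $\Lc^{\mathrm{in}}$ of loops contained in $\delta^u\D$; the latter is distributed as $\Lc_{\delta^u\D}^\theta$. Let $\Cc^{\mathrm{out}}$ denote the cluster in $\Lc^{\mathrm{out}}$ containing all loops that meet $\delta\partial\D$ and set $\rho^\ast=\inf\{|z|:z\in\Cc^{\mathrm{out}}\}$ (with $\rho^\ast=1$ if there is no such loop).

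\textbf{Decomposition of the event.} I would decompose $\{\delta\partial\D\leftrightarrow\delta^s\partial\D\}$ into three disjoint pieces:
\begin{itemize}
\item[(A)] Some loop of $\Lc_\D^\theta$ crosses the annulus $\delta^{1-u}\D\setminus\delta^s\D$: such a loop alone realises the connection from $\delta\partial\D$ to $\delta^s\partial\D$. The void-probability formula coming from \eqref{E:L_loopmeasure_annulus_quantitative1} gives $1-((s-1+u)/s)^\theta+O(1/(s|\log\delta|))$.
\item[(B)] Event (A) fails, and $\Cc^{\mathrm{out}}$ penetrates into $\delta^u\D$ to some intermediate radius $\rho^\ast=\delta^{(s-1+u)/t}$ with $t\in(1,1+(s-1)/u)$: the remaining crossing from $\rho^\ast$ to $\delta^{s-1+u}\partial\D$ is provided by $\Lc^{\mathrm{in}}$, independently of $\Cc^{\mathrm{out}}$, with probability $p_{(\delta^{s-1+u})^{1/t},\delta^{s-1+u}}$.
\item[(C)] Event (A) fails and $\Cc^{\mathrm{out}}$ does not descend below $\delta^{s-1+u}$: then the whole inner crossing of $\delta^u\D\setminus\delta^{s-1+u}\D$ must come from $\Lc^{\mathrm{in}}$, yielding by independence and scale invariance the factor $p_{\delta^u,\delta^{s-1+u}}$, with a leading prefactor $u^\theta$ extracted again from the void probability of the Poisson process of relevant crossings.
\end{itemize}

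\textbf{Computing the three contributions.} Case (A) is directly \eqref{E:L_loopmeasure_annulus_quantitative1}. For (C), the factor $u^\theta$ is the leading term of the void probability that no loop of $\Lc^{\mathrm{out}}$ drops below $\delta^{s-1+u}$; after conditioning on this void event, independence of $\Lc^{\mathrm{in}}$ reduces the remaining probability to $p_{\delta^u,\delta^{s-1+u}}$, and the $O(1/|\log\delta|)$ correction collects the next-order term of \eqref{E:L_loopmeasure_annulus_quantitative1} together with the cost of ``sealing'' loops provided by Lemma \ref{L:surround} and the FKG comparison of Lemma \ref{L:FKG}. For (B), I would use \eqref{E:L_loopmeasure_annulus_quantitative2} to write the infinitesimal density of $\rho^\ast$ on the scale $\delta^\alpha$ in the form
\[
\Big(1+\tfrac{O(1)}{\log\delta}\Big)\,\theta\,\Big(\tfrac{1}{\alpha}-\tfrac{1}{s-1+u}\Big)\,|\log\delta|\,\mathrm{d}\alpha,
\]
then change variables $\alpha=(s-1+u)/t$. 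The Jacobian and the identity $\alpha^{-1}-(s-1+u)^{-1}=(t-1)/(s-1+u)$ combine to produce the weight $(s-1+u+(1-u)t)^{-\theta-1}=(s+(1-u)(t-1))^{-\theta-1}$ and the global prefactor $\theta(1-u)(s-1+u)^\theta$ exactly as stated. Multiplying by $p_{(\delta^{s-1+u})^{1/t},\delta^{s-1+u}}$ and integrating over $t$ gives the integral term.

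\textbf{Main obstacle.} The hardest step is Case (B): producing a clean infinitesimal density for the deepest penetration radius of the \emph{cluster} $\Cc^{\mathrm{out}}$ (rather than for a single loop) and, uniformly in $t\in(1,1+(s-1)/u)$, controlling the error in the approximation of the conditional inner crossing probability by $p_{(\delta^{s-1+u})^{1/t},\delta^{s-1+u}}$. Obtaining the stated uniform factor $1+O((\log|\log\delta|)^C/|\log\delta|^\theta)$ requires combining the derivative estimate behind Lemma \ref{L:Poisson_kernel} (which enters via \eqref{E:L_loopmeasure_annulus_quantitative2}) with FKG-based decoupling of $\Lc^{\mathrm{in}}$ from $\Cc^{\mathrm{out}}$ (Lemma \ref{L:FKG}) and Lemma \ref{L:surround} to rule out rare configurations where extra conditioning beyond $\Cc^{\mathrm{out}}$ would spoil the density estimate. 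Once this uniform control is in place, adding up the three contributions yields the lemma.
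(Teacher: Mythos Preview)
Your case split mirrors the paper's, but there are two concrete problems.

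\textbf{The splitting radius.} You partition $\Lc_\D^\theta$ at the circle $\delta^u\partial\D$, taking $\Lc^{\mathrm{in}}=\Lc_{\delta^u\D}^\theta$. But the terms $p_{\delta^u,\delta^{s-1+u}}$ and $p_{(\delta^{s-1+u})^{1/t},\delta^{s-1+u}}$ in the lemma are crossing probabilities \emph{in the domain $\D$}, not in $\delta^u\D$. They arise only after the scaling step the paper performs first, namely $p_{\delta,\delta^s}=\P\big(\delta^u\partial\D\leftrightarrow\delta^{s-1+u}\partial\D\text{ in }\Lc_{\delta^{-1+u}\D}^\theta\big)$, followed by partitioning that loop soup into loops exiting $\D$ and loops staying in $\D$ (the latter being an independent $\Lc_\D^\theta$). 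Your unscaled partition would instead produce, after rescaling $\Lc^{\mathrm{in}}$, factors of the form $p_{\cdot,\delta^{s-u}}$, which does not match the statement. The inconsistency is already visible in your case (A), where you use the circle $\delta^{1-u}\partial\D$ rather than $\delta^u\partial\D$; that threshold is the correct one, but it belongs to the paper's splitting, not yours.

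\textbf{The exploration variable.} More importantly, you take $\rho^\ast$ to be the penetration depth of the \emph{cluster} $\Cc^{\mathrm{out}}$ and acknowledge that computing its density is the main obstacle. That obstacle is real and, as stated, unresolved: \eqref{E:L_loopmeasure_annulus_quantitative2} controls only the depth reached by a \emph{single} loop, not by a cluster. The paper sidesteps this entirely by setting
\[
t_\ast=\min\{t\geq 1:\exists\,\wp\in\Lc_{\delta^{-1+u}\D}^\theta\text{ crossing }\D\setminus(\delta^{s-1+u})^{1/t}\D\},
\]
the deepest penetration of a \emph{single} loop that crosses $\partial\D$. Being a Poisson statistic of the outer loops, $t_\ast$ has an explicit distribution and density read off directly from \eqref{E:L_loopmeasure_annulus_quantitative1}--\eqref{E:L_loopmeasure_annulus_quantitative2}; no cluster computation is needed. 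Conditionally on $t_\ast=t$, the loops staying in $\D$ form an independent $\Lc_\D^\theta$, and the remaining crossing probability is upper bounded trivially by $p_{(\delta^{s-1+u})^{1/t},\delta^{s-1+u}}$ and lower bounded by the FKG/welding argument (Lemmas \ref{L:FKG} and \ref{L:surround}) that costs only the stated $(\log|\log\delta|)^C/|\log\delta|^\theta$ factor. Replacing your cluster variable by this single-loop variable is the key simplification that makes the proof go through.
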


\begin{proof}[Proof of Lemma \ref{L:crossing_equation_sub}]
Let $u \in (0,1)$, $s>1$ and $\delta >0$. By scaling,
\[
p_{\delta,\delta^s} = \Prob{ \delta^u \overset{\Lc_{\delta^{-1+u} \D}}{\longleftrightarrow} \delta^{s-1+u} }.
\]
Let
\begin{equation}
t_* = \min \{ t \geq 1: \exists \wp \in \Lc_{ \delta^{-1+u} \D}^\theta \text{~crossing~} \D \setminus ( \delta^{s-1+u})^{1/t} \D \}.
\end{equation}
If $t_* =1$, then it means that the crossing that we are interested in is entirely made by a single loop which visits $\D^c$. If $t_* >1$, then we need loops included in $\D$ to finish the crossing. Notice further that if $t \geq 1 + \frac{s-1}{u}$, then $(\delta^{s-1+u})^{1/t}$ is larger than $\delta^u$ and the crossing of the annulus $\delta^u \D \setminus \delta^{s-1+u} \D$ is entirely made by loops staying in $\D$. Putting these remarks together, we obtain that
\begin{align*}
p_{\delta,\delta^s}
& = \Prob{ t_* = 1} + \Prob{t_* \geq 1 + \frac{s-1}{u} } p_{\delta^u, \delta^{s-1+u}} \\
& + \int_1^{1 + \frac{s-1}{u}} \Prob{ t_* \in \d t}
\Prob{ \delta^{u} \overset{\Lc_{\delta^{-1+u} \D}^\theta}{\longleftrightarrow} \delta^{s-1+u} \Big\vert t_* = t}.
\end{align*}
We are now going to estimate precisely each term appearing in the above display. These terms are divided into two different types of probabilities: probabilities that either a \emph{loop} or a \emph{cluster} crosses some annulus. Terms of the first type can be explicitly computed thanks to Lemma \ref{L:loopmeasure_annulus_quantitative}, whereas terms of the second type will expressed in terms of $F_\delta$.
In the sequel, the $O(1)$ terms might depend on $u$. In particular, the fact that $u$ is positive will be important for the estimates we are going to write.

By scaling and by Lemma \ref{L:loopmeasure_annulus_quantitative},
\begin{align*}
& \Prob{t_* = 1} =  \Prob{ \exists \wp \in \Lc_{\delta^{-1+u} \D}^\theta \text{~crossing~} \D \setminus \delta^{s-1+u} \D}
= \Prob{ \exists \wp \in \Lc_{\D}^\theta \text{~crossing~} \delta^{1-u} \D \setminus \delta^s \D} \\
& = 1 - \exp \left( - \theta \loopmeasure_\D (\{\wp \in \Lc_{\D}^\theta \text{~crossing~} \delta^{1-u} \D \setminus \delta^s \D\}) \right)
= 1 - \left( \frac{s-1+u}{s} \right)^\theta + \frac{O(1)}{s \log \delta}.
\end{align*}
Similarly,
\begin{align*}
\Prob{t_* \geq 1 + \frac{s-1}{u} } = \Prob{ \nexists \wp \in \Lc_{\delta^{-1+u}}^\theta \text{~crossing~} \D \setminus \delta^u \D }
= u^\theta + \frac{O(1)}{\log \delta}.
\end{align*}
Finally, writing $r_t = \delta^{1-u + \frac{s-1+u}{t}}$,
\begin{align*}
& \Prob{t_* \in \d t} = \frac{\d}{\d t} \Prob{ \exists \wp \in \Lc_{\delta^{-1+u} \D}^\theta \text{~crossing~} \D \setminus (\delta^{s-1+u})^{1/t} \D} \d t \\
& = \theta \left( \frac{\d}{\d t} \loopmeasure_\D ( \{ \wp \in \Lc_{\D}^\theta \text{~crossing~} \delta^{1-u} \D \setminus r_t \D \} ) \right) e^{ - \theta \loopmeasure_\D ( \{ \wp \in \Lc_{\D}^\theta \text{~crossing~} \delta^{1-u} \D \setminus r_t \D \} ) } \d t.
\end{align*}
By Lemma \ref{L:loopmeasure_annulus_quantitative},
\begin{align*}
& \frac{\d}{\d t} \loopmeasure_\D ( \{ \wp \in \Lc_{\D}^\theta \text{~crossing~} \delta^{1-u} \D \setminus r_t \D \} ) = \left( \frac{\d r_t}{\d t} \right) \frac{\d}{\d r} \loopmeasure_\D ( \{ \wp \in \Lc_{\D}^\theta \text{~crossing~} \delta^{1-u} \D \setminus r \D \} ) \Big\vert_{r = r_t} \\
& = \left( 1 + \frac{O(1)}{\frac{s-1+u}{t} \log \delta} \right)  \left( \frac{s-1+u}{t^2} \log (\delta) r_t \right) \frac{1}{r_t} \frac{1}{\log \delta} \left( \frac{1}{\frac{s-1+u}{t}} - \frac{1}{1-u + \frac{s-1+u}{t}} \right) \\
& = \left( 1 + \frac{O(1)}{\log \delta} \right) \frac{1-u}{t(1-u)+s-1+u}.
\end{align*}
Together with \eqref{E:L_loopmeasure_annulus_quantitative1}, it gives that
\begin{align*}
& \Prob{t_* \in \d t}
= \left( 1 + \frac{O(1)}{\log \delta} \right) \theta \frac{1-u}{t(1-u)+s-1+u}
\left( \frac{(1-u)t + s-1+u}{s-1+u} \right)^{-\theta} \d t \\
& = \left( 1 + \frac{O(1)}{\log \delta} \right) \theta (1-u) (s-1+u)^\theta (s + (1-u)(t-1))^{-\theta-1} \d t.
\end{align*}
Putting things together, we have obtained that
\begin{align*}
p_{\delta,\delta^s}
& = 1 - \left( \frac{s-1+u}{s} \right)^\theta + \frac{O(1)}{s \log \delta}
+ \left( u^\theta + \frac{O(1)}{\log \delta} \right) p_{\delta^u, \delta^{s-1+u}} + \left( 1 + \frac{O(1)}{\log \delta} \right) \times \\
& \times \theta (1-u) (s-1+u)^\theta
\int_1^{1 + \frac{s-1}{u}} (s + (1-u)(t-1))^{-\theta-1} \Prob{ \delta^u \overset{\Lc_{\delta^{-1+u} \D}^\theta}{\longleftrightarrow} \delta^{s-1+u} \Big\vert t_* = t} \d t.
\end{align*}

It remains to estimate the conditional probability appearing in the integral above.
Let $t \in [1, 1 + \frac{s-1}{u}]$.
We are going to conclude the proof by showing that
\begin{equation}
\label{E:proof_1234}
\Prob{ \delta^u \overset{\Lc_{\delta^{-1+u} \D}^\theta}{\longleftrightarrow} \delta^{s-1+u} \Big\vert t_* = t}
= \left( 1 - O(1) \frac{\log |\log \delta|^C}{|\log \delta|^\theta} \right) p_{(\delta^{s-1+u})^{1/t},\delta^{s-1+u}}.
\end{equation}
The upper bound is clear. Indeed,
conditionally on $t_* = t$, to make the crossing from $\delta^{s-1+u}$ to $\delta^u$, the loops which stay in $\D$ have to cross at least the annulus $(\delta^{s-1+u})^{1/t} \D \setminus \delta^{s-1+u} \D$. This is saying that
\[
\Prob{ \delta^u \overset{\Lc_{\delta^{-1+u} \D}^\theta}{\longleftrightarrow} \delta^{s-1+u} \Big\vert t_* = t}
\leq p_{(\delta^{s-1+u})^{1/t}, \delta^{s-1+u}}.
\]
We will now provide a lower bound. The idea is to ``weld'' the crossing from $\delta^u \D$ to $(\delta^{s-1+u})^{1/t} \D$, and the crossing from $(\delta^{s-1+u})^{1/t} \D$ to $(\delta^{s-1+u}) \D$. Let $t_+$ and $t_-$ be close to $t$ and be such that $t_- < t < t_+$.
Conditionally on $t_* = t$, to create a crossing of the annulus $\delta^u \D \setminus \delta^{s-1+u} \D$, it is enough for the loops staying in $\D$ to do the following four events:
\begin{itemize}
\item
Event $E_1$: There is a cluster crossing the annulus $(\delta^{s-1+u})^{1/t} \D \setminus \delta^{s-1+u} \D$;
\item
Event $E_2$: There is a loop staying in $(\delta^{s-1+u})^{1/t} \D$ which surrounds $(\delta^{s-1+u})^{1/t_-} \D$;
\item
Event $E_3$: There is a loop staying in $(\delta^{s-1+u})^{1/t_+} \D$ which surrounds $(\delta^{s-1+u})^{1/t} \D$;
\item
Event $E_4$: There is a loop crossing the annulus $(\delta^{s-1+u})^{1/t_+} \D \setminus (\delta^{s-1+u})^{1/t_-} \D$
\end{itemize}
By FKG inequality, the probability of the intersection of these four events is at least the product of each probability. This gives
\begin{align*}
& \Prob{ \delta^{s-1+u} \overset{\Lc_{\delta^{-1+u} \D}^\theta}{\longleftrightarrow} \delta^u \Big\vert t_* = t}
\geq p_{(\delta^{s-1+u})^{1/t},\delta^{s-1+u}} \Prob{E_2} \Prob{E_3} \Prob{E_4}.
\end{align*}
Let us denote $\eps = \delta^{s-1+u}$.
By Lemma \ref{L:surround}, there exists $c>0$ such that
$
\Prob{ E_2 } \geq 1 - \eps^{c (\frac{1}{t_-} - \frac1t)}.
$
We choose $t_+$ and $t_-$ precisely so that
\[
\frac{1}{t_-} - \frac{1}{t} = \frac1t - \frac{1}{t_+} = \frac{\theta}{c} \frac{\log \log \frac1\eps}{\log \frac1\eps}.
\]
This choice leads to:
$\Prob{ E_2 } = \Prob{E_3} \geq 1 - |\log \eps|^{-\theta}.$
On the other hand, by Lemma \ref{L:loopmeasure_annulus_quantitative},
\[
\Prob{ E_4 } = 1 - \left( \frac{\frac{1}{t_-} - \frac{1}{t_+}}{ \frac{1}{t_-} } \right)^{\theta + o(1/\log \delta)} = 1 - O(1) t^{\theta} |\log \eps|^{-\theta} \log |\log \delta|^C = 1 - O(1) \frac{\log |\log \delta|^C}{|\log \delta|^\theta}.
\]
Overall, we have obtained 
\[
\Prob{ \delta^u \overset{\Lc_{\delta^{-1+u} \D}^\theta}{\longleftrightarrow} \delta^{s-1+u} \Big\vert t_* = t}
\geq \left( 1 - O(1) \frac{\log |\log \delta|^C}{|\log \delta|^\theta} \right) p_{(\delta^{s-1+u})^{1/t},\delta^{s-1+u}}.
\]
This concludes the proof of \eqref{E:proof_1234} and the proof of Lemma \ref{L:crossing_equation_sub}.
\end{proof}

\begin{corollary}\label{C:crossing}
Let $(\delta_n)_{n \geq 1}$ be a decreasing sequence converging to zero such that $(F_{\delta_n})_{n \geq 1}$ converges uniformly on every compact subsets of $I_\leq^*$ to some continuous function $F_\infty$. $F_\infty$ satisfies for all $(s_1,s_2) \in I_\leq^*$,
\begin{align}\label{E:C_crossing}
& F_\infty(s_1,s_2) =  1 - \left( 1- \frac{s_1}{s_2} \right)^\theta 
+ \theta \left( \frac{s_2}{s_1}-1 \right)^\theta \int_1^\infty \left( \frac{s_2}{s_1} + t-1 \right)^{-\theta-1} F_\infty \left( \frac{s_2-s_1}{t}, s_2-s_1 \right) \d t.
\end{align}
\end{corollary}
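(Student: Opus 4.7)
The plan is to apply Lemma \ref{L:crossing_equation_sub} along the subsequence $(\delta_n)_{n\geq 1}$ with an appropriate rescaling, and then to pass successively to two limits: first $n \to \infty$ to replace $F_{\delta_n}$ by $F_\infty$ inside the formula, and then $u \to 0^+$ to eliminate the auxiliary parameter $u$ coming from the statement of Lemma \ref{L:crossing_equation_sub}. Fix $(s_1, s_2) \in I_\leq^*$ with $s_1 < s_2$ (the diagonal case $s_1 = s_2$ being trivial, since both sides equal $1$), and apply Lemma \ref{L:crossing_equation_sub} with $\delta = \delta_n^{s_1}$ and $s = s_2 / s_1$. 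Unfolding the scaling, every probability in that lemma rewrites as a value of $F_{\delta_n}$: in particular $p_{\delta^u, \delta^{s-1+u}} = F_{\delta_n}(u s_1,\, s_2 - s_1 + u s_1)$ and $p_{(\delta^{s-1+u})^{1/t}, \delta^{s-1+u}} = F_{\delta_n}((s_2 - s_1 + u s_1)/t,\, s_2 - s_1 + u s_1)$.

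For fixed $u \in (0,1)$, I first send $n \to \infty$. All error terms of the form $O(1)/\log \delta$ or $O(1) (\log|\log\delta|)^C / |\log \delta|^\theta$ vanish. In the integral term, the variable $t$ lives in the compact interval $[1, 1 + (s_2/s_1 - 1)/u]$, and the pair of arguments of $F_{\delta_n}$ stays in the compact subset $\{(a, s_2 - s_1 + u s_1) : u s_1 \leq a \leq s_2 - s_1 + u s_1\}$ of $I_\leq^*$ (the first argument being bounded below by $u s_1 > 0$). Uniform convergence $F_{\delta_n} \to F_\infty$ on this compact set, combined with uniform boundedness of the integrand, justifies exchanging the limit with the integral. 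This yields a $u$-dependent identity
\begin{align*}
F_\infty(s_1, s_2) = {} & 1 - \Big(\tfrac{s_2 - s_1 + u s_1}{s_2}\Big)^\theta + u^\theta\, F_\infty(u s_1,\, s_2 - s_1 + u s_1) \\
& + \theta (1 - u) \Big(\tfrac{s_2}{s_1} - 1 + u\Big)^\theta \!\!\int_1^{1 + \frac{s_2/s_1 - 1}{u}}\!\! \Big(\tfrac{s_2}{s_1} + (1-u)(t-1)\Big)^{-\theta-1} F_\infty\Big(\tfrac{s_2 - s_1 + u s_1}{t},\, s_2 - s_1 + u s_1\Big)\,dt.
\end{align*}

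Now I let $u \to 0^+$. The first term tends to $1 - (1 - s_1/s_2)^\theta$; the second vanishes because $F_\infty \leq 1$ and $u^\theta \to 0$; and the prefactor of the integral tends to $\theta (s_2/s_1 - 1)^\theta$. The upper limit of integration diverges, so I invoke dominated convergence. For each fixed $t > 1$, continuity of $F_\infty$ on $I_\leq^*$ (it is a uniform limit of continuous functions on compacts) gives pointwise convergence of the integrand to $(s_2/s_1 + t - 1)^{-\theta-1}\, F_\infty((s_2 - s_1)/t,\, s_2 - s_1)$. For the required domination, I use the a priori bound of Corollary \ref{C:a_priori_u_bound}, which passes to the limit to give $F_\infty(a, b) \leq (b/a)^{-1/2}$ for $0 < a \leq b$; applied with $a = (s_2 - s_1 + u s_1)/t$ and $b = s_2 - s_1 + u s_1$, this yields $F_\infty \leq t^{-1/2}$, while the kernel $(s_2/s_1 + (1-u)(t-1))^{-\theta-1}$ is dominated by a constant multiple of $(1 + t)^{-\theta-1}$ uniformly in $u \in (0, 1/2)$. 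The integrand is therefore dominated by $C\, t^{-\theta - 3/2}$, which is integrable on $[1, \infty)$. Dominated convergence then produces exactly the identity claimed in Corollary \ref{C:crossing}.

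The most delicate step is the $u \to 0^+$ limit where the upper limit of integration diverges and the first argument of $F_\infty$ degenerates to $0$. Without the polynomial control $F_\infty(a, b) \leq \sqrt{a/b}$ coming from Corollary \ref{C:a_priori_u_bound}, nothing would a priori exclude a non-integrable behaviour of $F_\infty$ near the boundary $\{s_1 = 0\}$ of $I_\leq^*$, and the limiting integral equation could fail to make sense. This a priori decay is the key technical input that makes the dominated convergence argument work.
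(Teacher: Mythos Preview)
Your proof is correct and follows exactly the same two-step limit argument as the paper: first pass to the limit $n\to\infty$ using uniform convergence on compacts to obtain the $u$-dependent identity, then send $u\to 0^+$ via dominated convergence. Your write-up is in fact more careful than the paper's, which compresses both limits into two sentences.

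One small correction concerning your final paragraph: the a priori decay $F_\infty(a,b)\leq (a/b)^{1/2}$ from Corollary~\ref{C:a_priori_u_bound} is \emph{not} needed here. The kernel alone already furnishes an integrable dominant: for $u\in(0,1/2]$ and $t\geq 1$ one has $(s_2/s_1 + (1-u)(t-1))^{-\theta-1}\leq (s_2/s_1 + (t-1)/2)^{-\theta-1}$, which is integrable on $[1,\infty)$ since $\theta>0$, and $F_\infty\leq 1$ suffices for the other factor. This is precisely how the paper handles it (``continuity of $F_\infty$ and dominated convergence theorem''). So your argument works, but the claim that Corollary~\ref{C:a_priori_u_bound} is the ``key technical input'' for this step is an overstatement---that corollary becomes essential only later, in the proof of Theorem~\ref{T:convergence_crossing}, to place the subsequential limit in the right function space $\Gc_{1/2}$ where the fixed point is unique.
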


\begin{proof}[Proof of Corollary \ref{C:crossing}]
Because the convergence is uniform on each compact set and by Lemma \ref{L:crossing_equation_sub}, we have for all $(s_1,s_2) \in I_\leq^*$, for all $u \in (0,1]$,
\begin{align*}
F_\infty(s_1,s_2) & =  1 - \left( 1 - (1-u) \frac{s_1}{s_2} \right)^\theta + u^\theta F_\infty(us_1,s_2-(1-u)s_1) + \theta (1-u) (s_2/s_1-1+u)^\theta \times \\
& \times \int_1^{1+\frac{s_2/s_1-1}{u}} (s_2/s_1 + (1-u)(t-1))^{-\theta-1} F_\infty \left( \frac{s_2-(1-u)s_1}{t}, s_2-(1-u)s_1 \right) \d t.
\end{align*}
We will get \eqref{E:C_crossing} by sending $u \to 0$. Indeed, the term $u^\theta F_\infty(us_1,s_2-(1-u)s_1)$ goes to zero simply because $F_\infty$ is bounded by 1. The integral is handled by continuity of $F_\infty$ and dominated convergence theorem.
\end{proof}

\subsection{Fixed-points of the integral equation and Proof of Theorem \ref{T:convergence_crossing}}\label{S:fixed-point}

In this section we study the solutions of the integral equation that appeared in Corollary \ref{C:crossing}. We start by introducing the functional spaces we will be working with.
For $\alpha \in \R$, we define
\begin{equation}
\label{E:spaceF_alpha}
\Fc_\alpha := \Big\{ f: [1,\infty) \to [0,1] ~\text{measurable}, ~\norme{f}_\alpha := \sup_{s \geq 1} s^\alpha |f(s)| < \infty \Big\}
\end{equation}
and for all measurable function $f:[1,\infty) \to [0,1]$, let
\begin{equation}
\label{E:mapT}
T(f) : s \in [1,\infty) \mapsto 1 - \left( 1 - \frac1s \right)^\theta + \theta (s-1)^\theta \int_1^\infty \left( s + t-1 \right)^{-\theta-1} f(t) \d t.
\end{equation}
We now introduce the two-point versions (recall the definition \eqref{E:setI} of $I_\leq^*$): for $\alpha \in \R 0$, let
\begin{equation}
\label{E:spaceG_alpha}
\Gc_\alpha := \Big\{
F : I_\leq^* \to [0,1] \text{~measurable}, ~\norme{F}_{\Gc_\alpha} := \sup_{(s_1,s_2) \in I_\leq^*} (s_2/s_1)^\alpha  F(s_1,s_2) < \infty
\Big\}
\end{equation}
and for all measurable function $F: I_\leq^* \to [0,1]$, let
\begin{equation}
\label{E:mapS}
S (F) : (s_1,s_2) \in I_\leq^* \mapsto 1 - \Big( 1- \frac{s_1}{s_2} \Big)^\theta 
+ \theta \Big( \frac{s_2}{s_1}-1 \Big)^\theta \int_1^\infty \Big( \frac{s_2}{s_1} + t-1 \Big)^{-\theta-1} F\Big( \frac{s_2-s_1}{t}, s_2-s_1 \Big) \d t.
\end{equation}

\begin{lemma}\label{L:fixed-points}
Let $\theta >0$.
For all $\alpha \in (-\theta,1)$, $\Fc_\alpha$ is stable under $T$ and $T : \Fc_\alpha \to \Fc_\alpha$ is a Lipschitz map with Lipschitz constant $c(\alpha,\theta)$ where
\begin{equation}\label{E:L_fixed_point2}
c(\alpha,\theta) = \frac{\Gamma(1-\alpha) \Gamma(\alpha+\theta)}{\Gamma(\theta)}.
\end{equation}
In particular,
\begin{itemize}
\item
If $\theta \in (0,1)$ and $\alpha \in (0,1-\theta)$, then $c(\alpha,\theta) < 1$ and $T : \Fc_\alpha \to \Fc_\alpha$ is a contraction mapping. Therefore, for all $\alpha \in (0,1-\theta)$, there exists a unique function $f_\alpha \in \Fc_\alpha$ satisfying $f_\alpha = T(f_\alpha)$;
\item
If $\theta > 1$ and $\alpha \in (1-\theta, 0)$, then $c(\alpha,\theta) < 1$ and $s \mapsto 1$ is the unique fixed point of $T$ in $\Fc_\alpha$.
\end{itemize}
The same statements hold with the map $T$ and the spaces $\Fc_\alpha$ replaced by $S$ and $\Gc_\alpha$ respectively. For $\theta \in (0,1)$ and $\alpha \in (0,1-\theta)$, let $F_\alpha$ denote the unique fixed-point of $S$ in $\Gc_\alpha$.
For all $\theta \in (0,1)$ and $\alpha, \beta \in (0,1-\theta)$,
\begin{equation}
\label{E:L_fixed_point3}
F_\alpha(s_1,s_2) = f_\beta(s_2/s_1), \quad \quad (s_1,s_2) \in I_\leq^*.
\end{equation}
\end{lemma}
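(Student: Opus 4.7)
\emph{Plan for Lemma~\ref{L:fixed-points}.} The whole statement reduces to a single Beta-integral computation combined with Banach's fixed-point theorem. I would first verify the identity $T(\mathbf{1})=\mathbf{1}$: since $\int_1^\infty(s+t-1)^{-\theta-1}\d t=s^{-\theta}/\theta$, the third summand in \eqref{E:mapT} evaluated at $f\equiv 1$ equals $(1-1/s)^\theta$ and cancels the second. Because the integral kernel in $T$ is positive, $T$ is monotone, so $0\leq f\leq\mathbf{1}$ implies $0\leq T(f)\leq\mathbf{1}$. For the Lipschitz bound on $\Fc_\alpha$, using $|f(t)-g(t)|\leq\|f-g\|_\alpha t^{-\alpha}$ gives
\begin{equation*}
s^\alpha|T(f)(s)-T(g)(s)|\leq\|f-g\|_\alpha\,I(s),\qquad I(s):=\theta\, s^\alpha(s-1)^\theta\int_1^\infty(s+t-1)^{-\theta-1}t^{-\alpha}\d t.
\end{equation*}
The substitution $u=(s-1)/(s+t-1)$ followed by $z=u/(1-u)$ converts this into
\begin{equation*}
I(s)=\theta\,(s/(s-1))^\alpha\int_0^{s-1}z^{\theta+\alpha-1}(z+1)^{-\theta-1}\d z,
\end{equation*}
which tends to $\theta B(\theta+\alpha,1-\alpha)=c(\alpha,\theta)$ as $s\to\infty$. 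I would then show $I(s)\leq c(\alpha,\theta)$ for every $s\geq 1$ by splitting on the sign of $\alpha$: for $\alpha\in[0,1)$, a further substitution $z=(s-1)w$ together with the pointwise inequality $1-(s-1)w/s\geq 1-w$ bound $I(s)$ by $((s-1)/s)^\theta\cdot c(\alpha,\theta)$; for $\alpha\in(-\theta,0)$, the prefactor $(s/(s-1))^\alpha<1$ and the truncation of the integral at $s-1$ (instead of $+\infty$) both decrease $I(s)$ relative to its limit at infinity.

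Having established $\|T(f)-T(g)\|_\alpha\leq c(\alpha,\theta)\|f-g\|_\alpha$, stability of $\Fc_\alpha$ under $T$ follows once one observes that $T(0)(s)=1-(1-1/s)^\theta\sim\theta/s$ lies in $\Fc_\alpha$ for $\alpha<1$. To determine when $c(\alpha,\theta)<1$, I would use the identities $c(0,\theta)=c(1-\theta,\theta)=1$ together with the strict log-convexity of $\alpha\mapsto c(\alpha,\theta)$ on $(-\theta,1)$ inherited from that of $\log\Gamma$, which forces $c(\alpha,\theta)<1$ strictly between $0$ and $1-\theta$. For $\theta\in(0,1)$ and $\alpha\in(0,1-\theta)$, Banach's theorem on the complete metric space $(\Fc_\alpha,\|\cdot\|_\alpha)$ yields the unique fixed point $f_\alpha$. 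For $\theta>1$ and $\alpha\in(1-\theta,0)$, the space $\Fc_\alpha$ coincides with the set of all $[0,1]$-valued measurable functions (since $s^\alpha\leq 1$ for $s\geq 1$, $\alpha\leq 0$), so $\mathbf{1}\in\Fc_\alpha$, and by the contraction property and $T(\mathbf{1})=\mathbf{1}$ it is the unique fixed point.

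The parallel statements for $S$ on $\Gc_\alpha$ follow by introducing the isometric embedding $\Phi\colon\Fc_\alpha\hookrightarrow\Gc_\alpha$, $(\Phi f)(s_1,s_2):=f(s_2/s_1)$, which satisfies $\|\Phi f\|_{\Gc_\alpha}=\|f\|_\alpha$ upon parametrising $(s_1,s_2)$ by $\sigma=s_2/s_1$. A direct substitution $F=\Phi f$ in \eqref{E:mapS}, using $F((s_2-s_1)/t,s_2-s_1)=f(t)$, gives the intertwining identity $S\circ\Phi=\Phi\circ T$; running the same Beta-integral bound with $\sigma=s_2/s_1$ in place of $s$ shows $S$ is a contraction on $\Gc_\alpha$ with the same constant $c(\alpha,\theta)$. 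Hence $F_\alpha:=\Phi f_\alpha$ is the unique fixed point of $S$ in $\Gc_\alpha$, and in particular $F_\alpha(s_1,s_2)=f_\alpha(s_2/s_1)$. Finally, since $s\geq 1$ and $\alpha\leq\alpha'$ imply $s^\alpha\leq s^{\alpha'}$, we get $\Fc_\alpha\supseteq\Fc_{\alpha'}$, so applying uniqueness in the larger space forces $f_\alpha=f_{\alpha'}$ for all $\alpha,\alpha'\in(0,1-\theta)$ (and likewise $F_\alpha=F_{\alpha'}$), which yields \eqref{E:L_fixed_point3}. The main technical obstacle is the case-split sign analysis establishing $I(s)\leq c(\alpha,\theta)$ uniformly in $s$, since the prefactor $(s/(s-1))^\alpha$ is above or below $1$ depending on the sign of $\alpha$ while the truncation of the Beta integral always decreases it; once this sharp Lipschitz constant is identified, everything else is a routine application of the contraction mapping theorem together with the intertwining.
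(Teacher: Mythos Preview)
Your argument is correct and follows the same global strategy as the paper (compute the sharp Lipschitz constant of the linear part of $T$ as a Beta-type integral, then invoke Banach), but the technical execution differs. The paper rewrites $I(s)$ as $\tfrac{\theta}{\theta+\alpha}\big(\tfrac{s-1}{s}\big)^\theta\,{}_2F_1\big(\alpha+\theta,\alpha,1+\alpha+\theta,\tfrac{s-1}{s}\big)$ via the integral representation and the Pfaff transformation, observes that this is increasing in $s$, and evaluates the limit using Gauss' formula for ${}_2F_1$ at $z=1$. You avoid special functions entirely: you compute the limit directly as a Beta integral and then bound $I(s)$ by a case split on the sign of $\alpha$, which is more elementary and arguably more transparent for $\alpha<0$ (where the monotonicity claim via the hypergeometric series is not immediate, since the coefficients $(\alpha)_n$ are negative for $n\ge 1$). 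You also add two ingredients the paper leaves implicit: the strict log-convexity argument pinning down exactly when $c(\alpha,\theta)<1$, and the explicit intertwining $S\circ\Phi=\Phi\circ T$ which cleanly reduces the $S$/$\Gc_\alpha$ statements to the $T$/$\Fc_\alpha$ ones and yields \eqref{E:L_fixed_point3}.

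One small imprecision: in your $\alpha\ge 0$ case, the substitution you want is not ``$z=(s-1)w$'' in the $z$-integral but rather a rescaling in the earlier $u$-integral, namely $u=\tfrac{s-1}{s}\,w$ with $w\in(0,1)$, which gives
\[
I(s)=\theta\Big(\tfrac{s-1}{s}\Big)^\theta\int_0^1 w^{\theta+\alpha-1}\Big(1-\tfrac{s-1}{s}w\Big)^{-\alpha}\d w,
\]
and \emph{then} your inequality $1-\tfrac{s-1}{s}w\ge 1-w$ together with $-\alpha\le 0$ yields $I(s)\le \big(\tfrac{s-1}{s}\big)^\theta c(\alpha,\theta)$ as claimed. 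The bound and the inequality you state are correct; only the name of the substitution needs adjusting.
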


\begin{figure}
   \centering
   \begin{subfigure}{.3\columnwidth}
    \def\svgwidth{\columnwidth}
\begingroup%
  \makeatletter%
  \providecommand\color[2][]{%
    \errmessage{(Inkscape) Color is used for the text in Inkscape, but the package 'color.sty' is not loaded}%
    \renewcommand\color[2][]{}%
  }%
  \providecommand\transparent[1]{%
    \errmessage{(Inkscape) Transparency is used (non-zero) for the text in Inkscape, but the package 'transparent.sty' is not loaded}%
    \renewcommand\transparent[1]{}%
  }%
  \providecommand\rotatebox[2]{#2}%
  \newcommand*\fsize{\dimexpr\f@size pt\relax}%
  \newcommand*\lineheight[1]{\fontsize{\fsize}{#1\fsize}\selectfont}%
  \ifx\svgwidth\undefined%
    \setlength{\unitlength}{332.62002321bp}%
    \ifx\svgscale\undefined%
      \relax%
    \else%
      \setlength{\unitlength}{\unitlength * \real{\svgscale}}%
    \fi%
  \else%
    \setlength{\unitlength}{\svgwidth}%
  \fi%
  \global\let\svgwidth\undefined%
  \global\let\svgscale\undefined%
  \makeatother%
  \begin{picture}(1,0.54236057)%
    \lineheight{1}%
    \setlength\tabcolsep{0pt}%
    \put(0,0){\includegraphics[width=\unitlength,page=1]{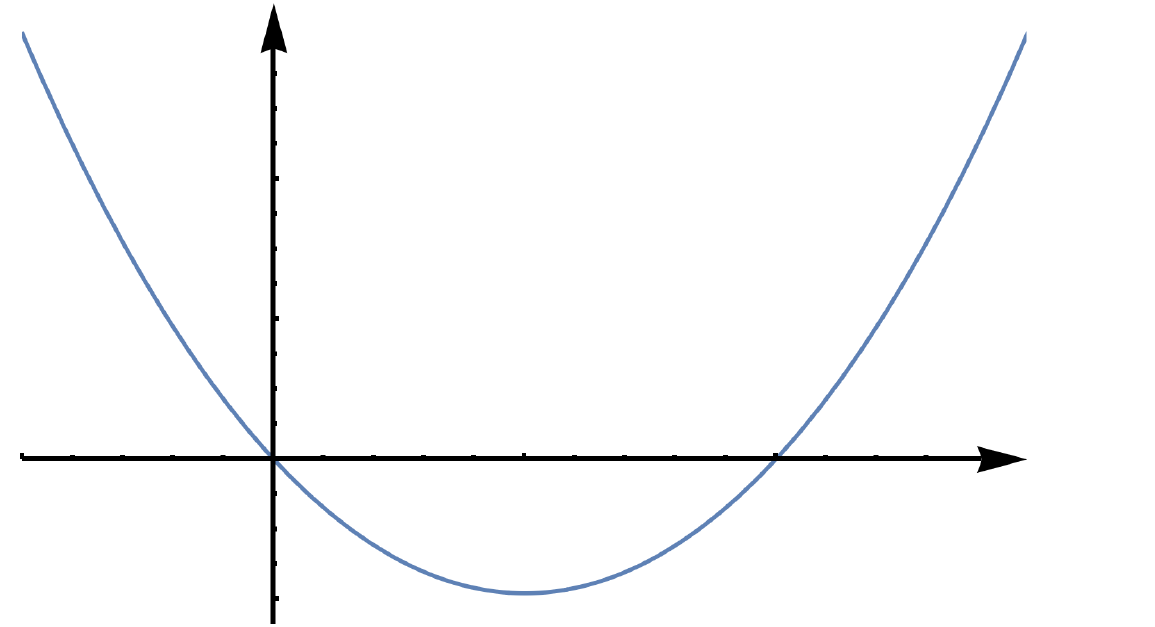}}%
    \put(0.25810658,0.5093119){\makebox(0,0)[lt]{\lineheight{1.25}\smash{\begin{tabular}[t]{l}$c(\alpha,\theta)$\end{tabular}}}}%
    \put(0.84152807,0.08833438){\makebox(0,0)[lt]{\lineheight{1.25}\smash{\begin{tabular}[t]{l}$\alpha$\end{tabular}}}}%
    \put(0.06364081,0.08425444){\makebox(0,0)[lt]{\lineheight{1.25}\smash{\begin{tabular}[t]{l}$(0,1)$\end{tabular}}}}%
    \put(0,0){\includegraphics[width=\unitlength,page=2]{theta08.pdf}}%
    \put(0.49800941,0.17799784){\makebox(0,0)[lt]{\lineheight{1.25}\smash{\begin{tabular}[t]{l}$1-\theta$\end{tabular}}}}%
  \end{picture}%
\endgroup%

   \caption{$\theta=0.8$}
   \end{subfigure}
   \begin{subfigure}{.3\columnwidth}
    \def\svgwidth{\columnwidth}
\begingroup%
  \makeatletter%
  \providecommand\color[2][]{%
    \errmessage{(Inkscape) Color is used for the text in Inkscape, but the package 'color.sty' is not loaded}%
    \renewcommand\color[2][]{}%
  }%
  \providecommand\transparent[1]{%
    \errmessage{(Inkscape) Transparency is used (non-zero) for the text in Inkscape, but the package 'transparent.sty' is not loaded}%
    \renewcommand\transparent[1]{}%
  }%
  \providecommand\rotatebox[2]{#2}%
  \newcommand*\fsize{\dimexpr\f@size pt\relax}%
  \newcommand*\lineheight[1]{\fontsize{\fsize}{#1\fsize}\selectfont}%
  \ifx\svgwidth\undefined%
    \setlength{\unitlength}{332.62002321bp}%
    \ifx\svgscale\undefined%
      \relax%
    \else%
      \setlength{\unitlength}{\unitlength * \real{\svgscale}}%
    \fi%
  \else%
    \setlength{\unitlength}{\svgwidth}%
  \fi%
  \global\let\svgwidth\undefined%
  \global\let\svgscale\undefined%
  \makeatother%
  \begin{picture}(1,0.58304148)%
    \lineheight{1}%
    \setlength\tabcolsep{0pt}%
    \put(0,0){\includegraphics[width=\unitlength,page=1]{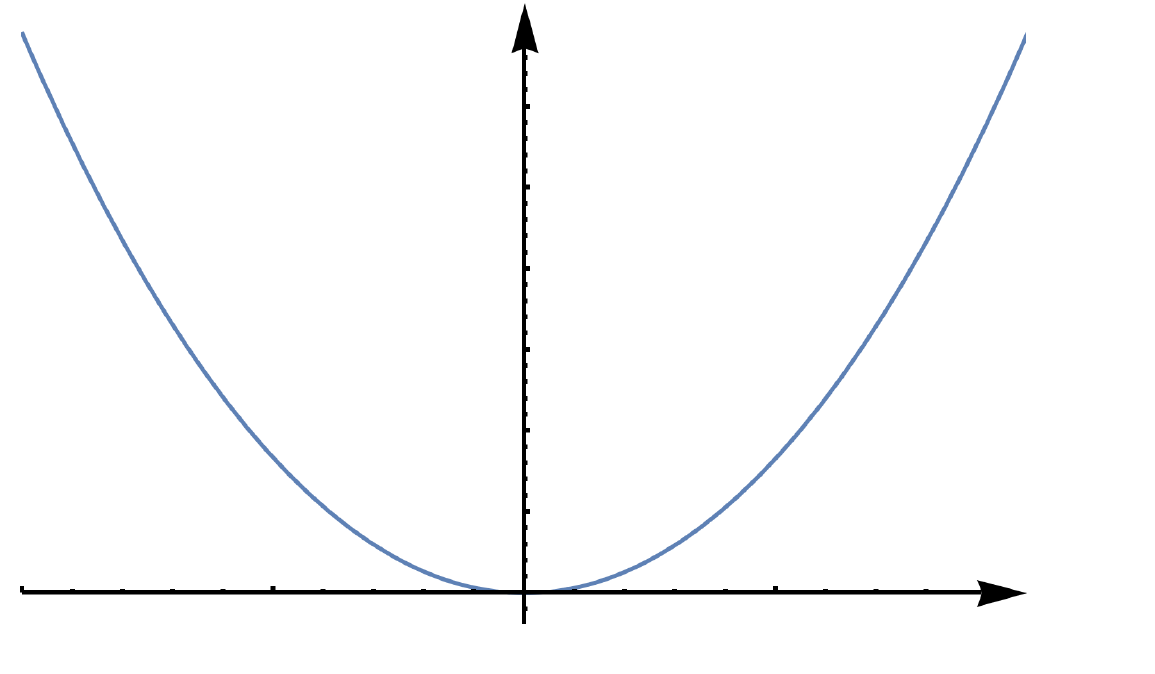}}%
    \put(0.22202941,0.54999282){\makebox(0,0)[lt]{\lineheight{1.25}\smash{\begin{tabular}[t]{l}$c(\alpha,\theta)$\end{tabular}}}}%
    \put(0.84152807,0.01627397){\makebox(0,0)[lt]{\lineheight{1.25}\smash{\begin{tabular}[t]{l}$\alpha$\end{tabular}}}}%
    \put(0.28823715,0.00725772){\makebox(0,0)[lt]{\lineheight{1.25}\smash{\begin{tabular}[t]{l}$(0,1)$\end{tabular}}}}%
  \end{picture}%
\endgroup%

   \caption{$\theta=1$}
   \end{subfigure}
   \begin{subfigure}{.3\columnwidth}
    \def\svgwidth{\columnwidth}
\begingroup%
  \makeatletter%
  \providecommand\color[2][]{%
    \errmessage{(Inkscape) Color is used for the text in Inkscape, but the package 'color.sty' is not loaded}%
    \renewcommand\color[2][]{}%
  }%
  \providecommand\transparent[1]{%
    \errmessage{(Inkscape) Transparency is used (non-zero) for the text in Inkscape, but the package 'transparent.sty' is not loaded}%
    \renewcommand\transparent[1]{}%
  }%
  \providecommand\rotatebox[2]{#2}%
  \newcommand*\fsize{\dimexpr\f@size pt\relax}%
  \newcommand*\lineheight[1]{\fontsize{\fsize}{#1\fsize}\selectfont}%
  \ifx\svgwidth\undefined%
    \setlength{\unitlength}{335.38705322bp}%
    \ifx\svgscale\undefined%
      \relax%
    \else%
      \setlength{\unitlength}{\unitlength * \real{\svgscale}}%
    \fi%
  \else%
    \setlength{\unitlength}{\svgwidth}%
  \fi%
  \global\let\svgwidth\undefined%
  \global\let\svgscale\undefined%
  \makeatother%
  \begin{picture}(1,0.53788595)%
    \lineheight{1}%
    \setlength\tabcolsep{0pt}%
    \put(0,0){\includegraphics[width=\unitlength,page=1]{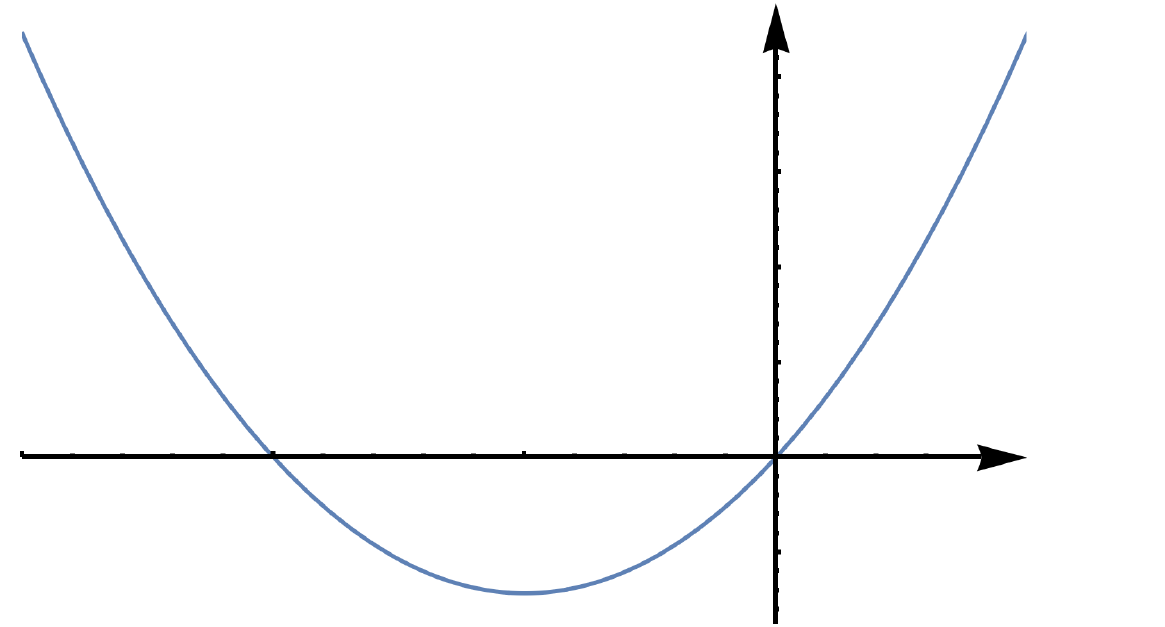}}%
    \put(0.44312518,0.50510995){\makebox(0,0)[lt]{\lineheight{1.25}\smash{\begin{tabular}[t]{l}$c(\alpha,\theta)$\end{tabular}}}}%
    \put(0.84283551,0.0876056){\makebox(0,0)[lt]{\lineheight{1.25}\smash{\begin{tabular}[t]{l}$\alpha$\end{tabular}}}}%
    \put(0.4965469,0.18464178){\makebox(0,0)[lt]{\lineheight{1.25}\smash{\begin{tabular}[t]{l}$(0,1)$\end{tabular}}}}%
    \put(0,0){\includegraphics[width=\unitlength,page=2]{theta12.pdf}}%
    \put(0.11870723,0.05210055){\makebox(0,0)[lt]{\lineheight{1.25}\smash{\begin{tabular}[t]{l}$1-\theta$\end{tabular}}}}%
  \end{picture}%
\endgroup%

   \caption{$\theta=1.2$}
   \end{subfigure}
\caption{Plot of the function $\alpha \mapsto c(\alpha,\theta)$ from Lemma \ref{L:fixed-points} for three values of $\theta$. The origin of the axes is fixed at $(0,1)$. If $\theta < 1$ (resp. $\theta > 1$), $c(\alpha,\theta)$ takes values less than 1 for some positive (resp. negative) values of $\alpha$. This reflects a phase transition at $\theta = 1$ in the uniqueness of the fixed-points of $T$ \eqref{E:mapT}, which in turn reflects a phase transition of the percolative behaviour of large loops in the loop soup (see Section \ref{S:intro_2values}).}\label{fig1}
\end{figure}

The difference of behaviour of $\alpha \mapsto c(\alpha, \theta)$ depending on whether $\theta < 1$, $\theta =1$ or $\theta >1$ is depicted in Figure \ref{fig1}.

The transition at $\theta = 1$ is another manifestation of the phase transition of the percolative behaviour of large loops in the loop soup (see Section \ref{S:intro_2values}). At criticality $\theta = 1$, the constant $c(\alpha, \theta)$ is equal to $\pi \alpha / \sin (\pi \alpha) \geq 1$ by Euler's reflection formula. Uniqueness of fixed-points cannot be inferred in that case using this approach. In Theorem \ref{T:perco_large_loops}, we will nevertheless be able to cover the case $\theta = 1$ by noticing that the unique fixed point $f_\alpha^\theta \in \Fc_\alpha$ tends to 1 as $\theta \to 1^-$.

Notice that it is not enough to work with the $L^\infty$-norm since it corresponds to the limiting case where the Lipschitz constant $c(0,\theta)$ equals 1. And indeed, when $\theta \in (0,1)$, the function $s \mapsto 1$ is another fixed point of $T$ that belongs to $\Fc_0$.

\begin{proof}
We are going to prove the statements concerning $T$ and $\Fc_\alpha$. The exact same arguments will also prove the statements concerning $S$ and $\Gc_\alpha$. The equality \eqref{E:L_fixed_point3} will then follow by uniqueness.

Let $\theta >0$ and $\alpha \in (-\theta,1)$.
We start by showing that $T(\Fc_\alpha) \subset \Fc_\alpha$. Let $f \in \Fc_\alpha$. The fact that $T(f) \geq 0$ is clear. $T(f)$ does not exceed 1 because for all $s \geq 1$,
\[
T(f)(s) \leq 1 - \left( \frac{s-1}{s} \right)^\theta + \frac{\theta}{s-1} \int_1^\infty \left( 1 + \frac{t}{s-1} \right)^{-\theta-1} \d t = 1.
\]
Concerning the $\alpha$-norm of $T(f)$, one can show that (see also below for details)
\begin{align*}
& \frac{\theta}{s-1} \int_1^\infty \left( 1 + \frac{t}{s-1} \right)^{-\theta -1} \frac{1}{t^\alpha} \d t
\sim \frac{c(\alpha,\theta)}{s^{\alpha}}
\quad \quad \text{as~} s \to \infty
\end{align*}
where $c(\alpha,\theta)$ is as in \eqref{E:L_fixed_point2}. This proves that $T$ maps $\Fc_\alpha$ to a subset of $\Fc_\alpha$.

We now study the Lipschitz property of $T$.
Let $f_1, f_2 \in \Fc_\alpha$. By definition of $T$, $\norme{\cdot}_\alpha$ and by the triangle inequality, we have
\begin{align*}
\norme{T(f_1) - T(f_2)}_\alpha & \leq \sup_{s \geq 1} s^\alpha \frac{\theta}{s-1} \int_1^\infty \left( 1 + \frac{t}{s-1} \right)^{-\theta-1} |f_1(t) - f_2(t)| \d t \\
& \leq \norme{f_1 - f_2}_\alpha \sup_{s \geq 1} s^\alpha \frac{\theta}{s-1} \int_1^\infty \left( 1 + \frac{t}{s-1} \right)^{-\theta-1} t^{-\alpha} \d t.
\end{align*}
Doing the change of variable $x = 1/t$ and then using the integral representation \eqref{E:Hypergeometric_integral} of the hypergeometric function ${}_2 F_1(a,b,c,\cdot)$ whose definition is recalled in \eqref{E:Hypergeometric}, we obtain that
\begin{align*}
s^\alpha \frac{\theta}{s-1} \int_1^\infty \left( 1 + \frac{t}{s-1} \right)^{-\theta-1} t^{-\alpha} \d t
& = \theta s^\alpha (s-1)^\theta \int_0^1 x^{\theta+\alpha-1} (1 + (s-1)x)^{-\theta-1} \d x \\
& = \frac{\theta}{\theta+\alpha} s^\alpha (s-1)^\theta {}_2F_1(\theta+1,\theta+\alpha,\theta+\alpha+1,1-s).
\end{align*}
The Pfaff transformation \eqref{E:Hypergeometric_Pfaff} then gives that
\begin{equation}
\label{E:Pfaff}
s^\alpha \frac{\theta}{s-1} \int_1^\infty \left( 1 + \frac{t}{s-1} \right)^{-\theta-1} t^{-\alpha} \d t
= \frac{\theta}{\theta+\alpha} \left( \frac{s-1}{s} \right)^\theta {}_2 F_1 \left( \alpha + \theta, \alpha, 1+\alpha+\theta, \frac{s-1}{s} \right).
\end{equation}
In particular, we see that seen as a function of $s \geq 1$, the above expression is increasing and converges to (using the fact that $\alpha <1$ and by \eqref{E:Hypergeometric_z=1})
\[
\frac{\theta}{\theta+\alpha} {}_2 F_1 \left( \alpha + \theta, \alpha, 1+\alpha+\theta, 1 \right) = \frac{\theta}{\theta+\alpha} \frac{\Gamma(1+\alpha+\theta) \Gamma(1-\alpha)}{\Gamma(1+\theta)} = c(\alpha,\theta)
\]
where $c(\alpha,\theta)$ is the constant defined in \eqref{E:L_fixed_point2}. Wrapping things up, we have proved that
\[
\norme{T(f_1) - T(f_2)}_\alpha \leq c(\alpha,\theta) \norme{f_1 - f_2}_\alpha.
\]
This concludes the proof that $T : \Fc_\alpha \to \Fc_\alpha$ is $c(\alpha,\theta)$-Lipschitz.

The existence of a unique fixed point of $T$ then follows from the Banach fixed-point theorem and from the fact that $\Fc_\alpha$ is a complete metric space (which in turn follows from the classical fact that $L^\infty$ is a complete space). This proves the lemma.
\end{proof}

We now have all the ingredients to prove Theorem \ref{T:convergence_crossing}.

\begin{proof}[Proof of Theorem \ref{T:convergence_crossing}]
Lemma \ref{L:crossing_tightness} proves tightness of $(F_\delta)_{\delta >0}$. Corollary \ref{C:crossing} proves that any subsequential limit $F_\infty$ satisfies $S(F_\infty) = F_\infty$ where we recall that $S$ is defined in \eqref{E:mapS}. Moreover, any such subsequential limit belongs to $\Gc_{1/2}$ since it can be bounded by the analogous quantity with intensity $\theta = 1/2$ for which the decay rate is known (see Corollary \ref{C:a_priori_u_bound}). Finally, Lemma \ref{L:fixed-points} shows that $F_\infty = F_\alpha$ for any $\alpha < 1-\theta$ where $F_\alpha$ is (as in Lemma \ref{L:fixed-points}) the unique fixed-point of $S$ which belongs to $\Gc_\alpha$. This concludes the proof.
\end{proof}

Let $f_\infty$ be the limiting function appearing in Theorem \ref{T:intro_convergence_crossing}.
As a direct consequence of Lemma \ref{L:fixed-points}, $f_\infty$ belongs to $\bigcap_{\alpha < 1-\theta} \Fc_\alpha$, that is to say $f_\infty$ decays faster $s^{-\alpha}$ for all $\alpha < 1-\theta$. In the next section, we will identify precisely $f_\infty$ and we will in particular obtain that $f_\infty$ decays exactly like a constant times $s^{\theta-1}$. In Appendix \ref{S:app_lower}, we give an independent short proof of the fact that $f_\infty$ does not decay much faster than $s^{-1+\theta}$ that does not rely on this explicit expression.

\subsection{\texorpdfstring{$f_\infty$ in terms of Bessel processes}{f infinity in terms of Bessel processes}}\label{S:Bessel}

In this section we prove Theorem \ref{T:Bessel}. Recall that we denote by $(R_t^{(\theta)})_{t \geq 0}$ a squared Bessel process of dimension $2\theta$, reflected at the origin, with $R_0^{(\theta)} = 0$ a.s. Denote by
\[
f_{\rm Bes} : s \in [1,\infty) \mapsto \Prob{ \forall t \in [1,s], R_t^{(\theta)} >0 }.
\]
We first recall the explicit expression of the above probability and  show that $f_{\rm Bes}$ is also a fixed-point of $T$.

\begin{lemma}\label{L:Bessel_explicit}
Let $\theta \in (0,1)$.
For all $s \geq 1$,
\begin{equation}
\label{E:L_Bessel_explicit}
f_{\rm Bes}(s)
= \frac{\sin(\pi \theta)}{\pi} \int_{s-1}^\infty t^{\theta-1} (t+1)^{-1} \d t.
\end{equation}
Moreover, $T f_{\rm Bes} = f_{\rm Bes}$.
\end{lemma}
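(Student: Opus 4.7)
The plan is to prove both claims by direct calculation, conditioning on $R_1^{(\theta)}$ in the first part and manipulating the resulting integral in the second.

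For the explicit formula \eqref{E:L_Bessel_explicit}, starting from $R_0^{(\theta)}=0$ the value $R_1^{(\theta)}$ has the gamma density $(2^\theta\Gamma(\theta))^{-1}x^{\theta-1}e^{-x/2}\,\d x$ on $(0,\infty)$, and by the Markov property, conditionally on $R_1^{(\theta)}=x>0$ the event $\{\forall t\in[1,s],\,R_t^{(\theta)}>0\}$ coincides with $\{\tau_x>s-1\}$, where $\tau_x$ is the first hitting time of $0$ by a squared Bessel process of dimension $2\theta$ started from $x$. The classical density of $\tau_x$ is $\Gamma(1-\theta)^{-1}(x/2)^{1-\theta}t^{\theta-2}e^{-x/(2t)}\,\d t$. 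Plugging both expressions in and swapping the order of integration, the inner $x$-integral becomes the elementary $\int_0^\infty \tfrac12 e^{-x(1+1/t)/2}\,\d x = t/(t+1)$, so the remaining integrand collapses to $t^{\theta-1}/(t+1)$ on $[s-1,\infty)$. The prefactor $1/(\Gamma(\theta)\Gamma(1-\theta))$ then equals $\sin(\pi\theta)/\pi$ by Euler's reflection formula, yielding \eqref{E:L_Bessel_explicit}.

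For the identity $T f_{\rm Bes}=f_{\rm Bes}$, I would substitute \eqref{E:L_Bessel_explicit} into the definition \eqref{E:mapT} of $T$ and swap the resulting double integral. The inner integral $\int_1^{u+1}(s+t-1)^{-\theta-1}\,\d t$ evaluates to $\theta^{-1}(s^{-\theta}-(s+u)^{-\theta})$; the $s^{-\theta}$ piece combines with $\theta(s-1)^\theta$ and the identity $\int_0^\infty u^{\theta-1}/(u+1)\,\d u=\pi/\sin(\pi\theta)$ to produce exactly $(1-1/s)^\theta$, cancelling the $-(1-1/s)^\theta$ term in $T f_{\rm Bes}(s)$. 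Checking the identity therefore reduces to
\[
J(s):=(s-1)^\theta\int_0^\infty\frac{u^{\theta-1}}{(u+1)(s+u)^\theta}\,\d u \;=\; \int_0^{s-1}\frac{v^{\theta-1}}{v+1}\,\d v=:K(s).
\]
Both sides vanish at $s=1$, so it suffices to show $J'=K'$. Differentiating $J$ with the product rule and using $(s+u)-(s-1)=u+1$ to telescope yields $J'(s)=\theta(s-1)^{\theta-1}\int_0^\infty u^{\theta-1}(s+u)^{-\theta-1}\,\d u$, and the standard beta integral $\int_0^\infty u^{\theta-1}(s+u)^{-\theta-1}\,\d u=(\theta s)^{-1}$ gives $J'(s)=(s-1)^{\theta-1}/s=K'(s)$, as required. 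The only obstacle is routine bookkeeping in the integral swap; no probabilistic subtlety enters beyond the conditioning on $R_1^{(\theta)}$ and the use of the first-passage density.
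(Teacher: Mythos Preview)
Your proof is correct, but both halves take a different route from the paper.

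For the explicit formula \eqref{E:L_Bessel_explicit}, the paper simply invokes the known generalised arcsine law for the last zero of $R^{(\theta)}$ before time $s$ (citing \cite[Proposition 3.1]{Bertoin1999}) and reads off the tail. Your argument is a self-contained computation: you condition on $R_1^{(\theta)}$, use the explicit first-passage density of the squared Bessel process of dimension $2\theta$, and integrate. This avoids the external reference at the cost of needing the hitting-time density.

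For $T f_{\rm Bes}=f_{\rm Bes}$, the contrast is sharper. The paper argues probabilistically: it uses the additivity decomposition $(R_t^{(\theta)})_{t\ge 1}\overset{(d)}{=}(R_{t-1}^{(\theta)}+R_{t-1}^{(0)})_{t\ge 1}$, where $R^{(0)}$ is an independent $0$-dimensional squared Bessel process started from $R_1^{(\theta)}$, and conditions on the first zero $\tau$ of $R^{(0)}$. This gives the integral equation directly as a renewal-type identity, and the authors emphasise that this mirrors the 2D exploration used to derive the equation for the loop-soup crossing probability (first layer of loops $\leftrightarrow$ excursion part $R^{(0)}$; remaining crossing $\leftrightarrow$ the fresh $R^{(\theta)}$). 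Your argument is purely analytic: you substitute the explicit formula into $T$, swap the double integral, split off the piece that cancels $-(1-1/s)^\theta$, and reduce the remainder to an identity $J(s)=K(s)$ checked by differentiating and evaluating a beta integral. Both are clean; the paper's version explains \emph{why} the Bessel function satisfies the same fixed-point equation (the 1D/2D correspondence of Section~\ref{S:intro_outline}), while yours is more direct and needs no structural input beyond \eqref{E:L_Bessel_explicit}.
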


\begin{proof}
The last hitting time of 0 of $R^{(\theta)}$ before $s$, divided by $s$, follows the generalised arcsine law \eqref{E:arcsine_law} with parameter $1-\theta$; see e.g. \cite[Proposition 3.1]{Bertoin1999}. The left hand side of \eqref{E:L_Bessel_explicit} corresponds to the probability that this last hitting time is smaller than 1. Integrating \eqref{E:arcsine_law} and then performing a change of variable concludes the proof of \eqref{E:L_Bessel_explicit}.

We now prove that $T f_{\rm Bes} = f_{\rm Bes}$.
It is likely that this can be proved directly from the explicit expression of $f_{\rm Bes}$ and certain identities between special functions. We prefer to take a different route that explains what this integral equation means probabilistically for the Bessel process.
We can decompose (see \cite{shiga1973bessel})
\[
(R_t^{(\theta)})_{t \geq 1}
\quad \overset{\mathrm{(d)}}{=} \quad
(R_{t-1}^{(\theta)})_{t \geq 1} + (R^{(0)}_{t-1})_{t \geq 1}
\]
where on the right hand side $R^{(\theta)}$ and $R^{(0)}$ are independent and are distributed as follows: $R^{(\theta)}$ is as before a squared Bessel process of dimension $2\theta$ with $R_0^{(\theta)} = 0$ a.s., and $R^{(0)}$ is a zero-dimensional squared Bessel process with $R_0^{(0)} \sim R_1^{(\theta)}$.
Elaborating on the discussion that follows Theorem \ref{T:integral}, the first term on the right hand side can be thought of as the local time of the (large) loops that stay inside $\delta \D$, whereas the second term corresponds to the local time of the loops that hit $\delta \partial \D$ (excursions).
Let $\tau := \inf \{ t>0: R^{(0)}_t = 0 \}$ and fix $s \geq 1$. Using the above decomposition, we can write
\begin{equation}
\label{E:Bessel1}
f_{\rm Bes}(s) = \Prob{ \tau \geq s-1 } + \int_0^{s-1} \Prob{ \tau \in \d u } \Prob{ \forall v \in [u,s-1], R_v^{(\theta)} >0 }.
\end{equation}
By Brownian scaling,
the probability $\Prob{ \forall v \in [u,s-1], R_v^{(\theta)} >0 }$ is equal to $f_{\rm Bes}((s-1)/u)$. To conclude that $Tf_{\rm Bes} = f_{\rm Bes}$, we need to compute the density of $\tau$. Conditionally on $R_0^{(0)} = x^2$, the density of $\tau$ is given by (see \cite[Proposition 2.9]{LawlerBessel})
\[
\Prob{ \tau \in \d u \vert R_0^{(0)} = x^2 }
= \frac12 x^2 u^{-2} e^{-x^2 / (2u) } \indic{u >0} \d u.
\]
Using the explicit density of $R_0^{(0)} \sim R_1^{(\theta)}$ (see \cite[Equation (37)]{LawlerBessel}), we obtain that
\begin{align*}
\Prob{ \tau \in \d u }
& = \frac{1}{\Gamma(\theta)} 2^{-\theta} u^{-2} \int_0^\infty \d x ~x^{2\theta + 1} e^{- \frac{u+1}{u}\frac{x^2}{2}} \indic{u >0} \d u \\
& = \frac{1}{\Gamma(\theta)} u^{-1+\theta} (u+1)^{-1-\theta} \int_0^\infty \d y ~ y^\theta e^{-y} \indic{u >0} \d u
= \theta u^{-1+\theta} (u+1)^{-1-\theta} \indic{u >0} \d u
\end{align*}
thanks to the change of variable $y = \frac{u+1}{u}\frac{x^2}{2}$.
In particular,
\[
\Prob{ \tau \geq s-1} = \theta \int_{s-1}^\infty u^{-1+\theta} (u+1)^{-1-\theta} \d u = 1 - \Big( \frac{s-1}{s} \Big)^\theta.
\]
Going back to \eqref{E:Bessel1} and then performing the change of variable $t = (s-1)/u$, we obtain that
\begin{align*}
f_{\rm Bes}(s) & = 1 - \Big( \frac{s-1}{s} \Big)^\theta
+ \theta \int_0^{s-1} u^{-1+\theta} (u+1)^{-1-\theta} f_{\rm Bes} \Big( \frac{s-1}{u} \Big) \d u \\
& = 1 - \Big( \frac{s-1}{s} \Big)^\theta + \theta (s-1)^\theta \int_1^\infty (s+t-1)^{-1-\theta} f_{\rm Bes}(t) \d t.
\end{align*}
This proves that $Tf_{\rm Bes} = f_{\rm Bes}$.
\end{proof}

Theorem \ref{T:Bessel} follows:

\begin{proof}[Proof of Theorem \ref{T:Bessel}]
By Lemma \ref{L:Bessel_explicit}, $f_{\rm Bes}$ is a fixed-point of $T$. The explicit expression \eqref{E:L_Bessel_explicit} of $f_{\rm Bes}$ also shows that it belongs to $\Fc_\alpha$ \eqref{E:spaceF_alpha} for all $\alpha < 1-\theta$. But by Lemma \ref{L:fixed-points}, $f_\infty$ is the only fixed point of $T$ that belongs to these spaces. Hence $f_\infty = f_{\rm Bes}$.
\end{proof}

\subsection{Probability of large crossings -- Proof of Theorem \ref{T:large_crossing}}\label{S:large_crossing}

This section is dedicated to the proof of Theorem \ref{T:large_crossing}. It will be divided into two parts. The first part will prove the lower bound and the second part will be focused on the upper bound. 

\paragraph{Lower bound} We start by showing some supermultiplicativity-type estimate. This estimate is a quick consequence of the FKG inequality.

\begin{lemma}\label{L:submultiplicativity}
There exists $c>0$ such that for all $\eta >0$, $\eps$ small enough and $s, t>1$,
\[
p_{\eps, \eps^{s + \eta}} ( p_{\eps^{s}, \eps^{s+t}} - \eps^{c\eta} )
\leq p_{\eps, \eps^{s + t}}.
\]
\end{lemma}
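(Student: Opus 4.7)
The inequality is a standard FKG supermultiplicativity bound together with the cost of inserting a gluing loop. First observe that the case $t\le \eta$ is trivial: then $\eps^{s+t}\ge\eps^{s+\eta}$, so $p_{\eps,\eps^{s+t}}\ge p_{\eps,\eps^{s+\eta}}$, while the left-hand side is at most $p_{\eps,\eps^{s+\eta}}\cdot 1$. So we may assume $t>\eta$, in which case $\eps^{s+t}\D$ is strictly contained in $\eps^{s+\eta}\D$, which itself is strictly contained in $\eps^s\D$.

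The core of the argument considers the three increasing events
\begin{align*}
A_1 &:= \{ \eps\partial\D \overset{\Lc_\D^\theta}{\longleftrightarrow} \eps^{s+\eta}\partial\D \}, \\
A_2 &:= \{ \eps^s\partial\D \overset{\Lc_\D^\theta}{\longleftrightarrow} \eps^{s+t}\partial\D \}, \\
A_3 &:= \{ \exists \wp\in\Lc_\D^\theta \text{ surrounding } \eps^{s+\eta}\D \text{ while staying in } \eps^s\D \}.
\end{align*}
The key geometric claim is that $A_1\cap A_2\cap A_3 \subset \{\eps\partial\D \overset{\Lc_\D^\theta}{\longleftrightarrow} \eps^{s+t}\partial\D\}$. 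Indeed, on $A_3$ the surrounding loop $\wp$ is a Jordan curve in the annulus $\eps^s\D\setminus\eps^{s+\eta}\D$ that separates $\eps^{s+\eta}\D$ from $\C\setminus\eps^s\D$. On $A_1$, the cluster realizing the crossing is a connected set joining $\eps\partial\D$ (outside $\eps^s\D$) to $\eps^{s+\eta}\partial\D$ (inside $\wp$), so it must cross $\wp$ and therefore merges with the cluster of $\wp$. On $A_2$, using $t>\eta$, the second crossing cluster is connected and joins $\eps^s\partial\D$ (outside $\wp$) to $\eps^{s+t}\partial\D$ (inside $\wp$), so it likewise merges with the cluster of $\wp$. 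All three pieces then form a single cluster crossing $\eps\D\setminus\eps^{s+t}\D$.

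To finish, I apply the FKG inequality (Lemma \ref{L:FKG}) to the three increasing events:
\[
p_{\eps,\eps^{s+t}} \ge \P(A_1\cap A_2\cap A_3) \ge \P(A_1)\,\P(A_2)\,\P(A_3) = p_{\eps,\eps^{s+\eta}}\, p_{\eps^s,\eps^{s+t}}\, \P(A_3).
\]
By scale invariance of the loop soup and the inclusion $\Lc_{\eps^s\D}^\theta\subset\Lc_\D^\theta$, the event $A_3$ has probability at least the probability that some loop in $\Lc_\D^\theta$ surrounds $\eps^\eta\D$, which by Lemma \ref{L:surround} is bounded below by $1-\eps^{c\eta}$ for some $c=c(\theta)>0$ (and $\eps$ small enough so that $\eps^\eta<1/10$). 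Since $p_{\eps^s,\eps^{s+t}}\le 1$, this yields
\[
p_{\eps,\eps^{s+t}} \ge p_{\eps,\eps^{s+\eta}}\,p_{\eps^s,\eps^{s+t}}\,(1-\eps^{c\eta}) \ge p_{\eps,\eps^{s+\eta}}\bigl(p_{\eps^s,\eps^{s+t}}-\eps^{c\eta}\bigr),
\]
which is the desired inequality. The only nontrivial step is the topological merging argument; everything else is a direct application of tools already established in the preliminary section.
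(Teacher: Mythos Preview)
Your proof is correct and follows essentially the same approach as the paper: the same three increasing events, the same topological merging argument, FKG, and Lemma~\ref{L:surround} for the cost of the gluing loop. The paper phrases FKG as $\P(E\cap A_2\mid A_1)\ge\P(E\cap A_2)$ and then uses $\P(E\cap A_2)\ge\P(A_2)-\P(E^c)$, whereas you apply FKG directly to the triple and then use $p_2(1-\eps^{c\eta})\ge p_2-\eps^{c\eta}$; these are equivalent bookkeepings. One cosmetic point: a Brownian loop is not a Jordan curve (it is not simple), but your argument only uses that $\wp$ is a compact connected set separating $\eps^{s+\eta}\D$ from $\partial(\eps^s\D)$, which is exactly the content of the surrounding event, so nothing is lost.
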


\begin{proof}
Let us denote by $E$ the event that there exists a loop $\wp \in \Lc_\D^\theta$ which surrounds $\eps^{s+\eta} \D$ while staying in $\eps^{s} \D$. By scaling and by Lemma \ref{L:surround}, there exists $c>0$ such that for all $\eps$ small enough $\Prob{E} \geq 1 - \eps^{c\eta}$.
We have
\begin{align*}
\frac{p_{\eps,\eps^{s+t}}}{p_{\eps, \eps^{s+\eta}}}
& = \Prob{ \eps \partial \D \overset{\Lc_\D^\theta}{\longleftrightarrow} \eps^{s+t} \partial \D \Big\vert \eps \partial \D \overset{\Lc_\D^\theta}{\longleftrightarrow} \eps^{s+\eta} \partial \D } \geq \Prob{ E, \eps^{s} \partial \D \overset{\Lc_\D^\theta}{\longleftrightarrow} \eps^{s+t} \partial \D \Big\vert \eps \partial \D \overset{\Lc_\D^\theta}{\longleftrightarrow} \eps^{s+\eta} \partial \D }.
\end{align*}
Now, by FKG inequality, the above probability is larger than the same one without conditioning. Therefore,
\[
\frac{p_{\eps,\eps^{s+t}}}{p_{\eps, \eps^{s+\eta}}} \geq p_{\eps^{s}, \eps^{s+t}} - \Prob{E^c}
\]
which concludes the proof.
\end{proof}

\begin{corollary}\label{C:submultiplicative}
The function $f_\infty :[1,\infty) \to (0,1]$ is supermultiplicative: for all $s, t \geq 1$,
$
f_\infty(s)f_\infty(t) \leq f_\infty(st).
$
Therefore, there exists $\xi_1 >0$ such that
$
f_\infty(s) = s^{-{\xi_1} + o(1)}
$
as $s \to \infty$.
\end{corollary}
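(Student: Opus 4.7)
The plan is to obtain the supermultiplicativity by passing to the limit $\eps \to 0$ in Lemma \ref{L:submultiplicativity}, and then deduce the existence of $\xi_1$ via Fekete's subadditive lemma applied to $-\log f_\infty(e^\cdot)$.

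First I would rewrite Lemma \ref{L:submultiplicativity} in the $F_\delta$ notation: for $s > 1$, $t > 1$, $\eta > 0$, and $\eps$ sufficiently small,
\[
F_\eps(1, s+\eta) \bigl( F_\eps(s, s+t) - \eps^{c\eta} \bigr) \leq F_\eps(1, s+t).
\]
By Theorem \ref{T:convergence_crossing}, $F_\eps \to F_\infty$ uniformly on compact subsets of $I_\leq^*$, and $F_\infty(a,b) = f_\infty(b/a)$. Sending $\eps \to 0$ (the error term $\eps^{c\eta}$ vanishes) yields
\[
f_\infty(s+\eta)\, f_\infty\!\left(\tfrac{s+t}{s}\right) \leq f_\infty(s+t).
\]
Since $f_\infty$ is continuous (which one can read off from the explicit formula of Theorem \ref{T:Bessel}, or directly from the integral equation \eqref{E:T_fixed_point}), letting $\eta \to 0^+$ and making the substitution $u = s$, $v = (s+t)/s$ (so that $uv = s+t$) gives $f_\infty(u) f_\infty(v) \leq f_\infty(uv)$ for $u, v > 1$. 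The boundary cases $u=1$ or $v=1$ are trivial because $f_\infty(1) = 1$ (immediate from the formula in Theorem \ref{T:Bessel}, as $\int_0^\infty t^{\theta-1}(t+1)^{-1} dt = \pi/\sin(\pi\theta)$).

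For the asymptotic statement, I would set $g(u) := -\log f_\infty(e^u)$ for $u \geq 0$. Supermultiplicativity of $f_\infty$ translates into subadditivity $g(u+v) \leq g(u) + g(v)$; moreover $g \geq 0$ (since $f_\infty \leq 1$), $g(0) = 0$, and $g(u) < \infty$ for all $u$ since $f_\infty > 0$ pointwise (again from Theorem \ref{T:Bessel}). Fekete's subadditive lemma then gives that
\[
\xi_1 := \lim_{u\to\infty} \frac{g(u)}{u} = \inf_{u>0}\frac{g(u)}{u}
\]
exists in $[0,\infty)$, which is exactly the statement $f_\infty(s) = s^{-\xi_1 + o(1)}$. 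Finally, strict positivity $\xi_1 > 0$ follows from the a priori upper bound in Corollary \ref{C:a_priori_u_bound}: passing that bound to the limit gives $f_\infty(s) \leq s^{-1/2}$, hence $g(u) \geq u/2$ and $\xi_1 \geq 1/2$.

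There is no substantial obstacle: the only mild subtleties are (i) justifying the passage to the limit (handled by the uniform convergence on compacts in Theorem \ref{T:convergence_crossing}), (ii) the change of variables and boundary cases in the supermultiplicativity (handled by $f_\infty(1)=1$ and continuity), and (iii) ensuring $\xi_1$ is finite and positive (handled by $f_\infty > 0$ and Corollary \ref{C:a_priori_u_bound} respectively). Everything else is a direct application of Fekete's lemma.
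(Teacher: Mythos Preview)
Your proof is correct and follows essentially the same approach as the paper: pass to the limit in Lemma~\ref{L:submultiplicativity}, use continuity of $f_\infty$ to remove the $\eta$, and apply Fekete's lemma. You are somewhat more thorough than the paper's (very terse) argument; in particular, you explicitly justify $\xi_1>0$ via the a~priori bound $f_\infty(s)\le s^{-1/2}$ from Corollary~\ref{C:a_priori_u_bound}, which the paper leaves implicit. One cosmetic remark: your substitution $u=s$, $v=(s+t)/s$ with $s,t>1$ a~priori only yields $v>1+1/u$, but the continuity you already invoke immediately extends this to all $u,v\ge 1$.
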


\begin{proof}
By Lemma \ref{L:submultiplicativity}, for all $s, t \geq 1$, $t'>t$,
$
f_\infty(s) f_\infty(t') \leq f_\infty(s t).
$
We then obtain the statement by continuity of $f_\infty$.
The existence of the limit of
$
\log f_\infty(s) / \log s
$
as $s \to \infty$
is a consequence of Fekete's subadditive lemma.
\end{proof}

Note that thanks to Theorem \ref{T:Bessel} we already know that $\xi_1 = 1-\theta$. In the remaining of this section we will still write $\xi_1$ instead of $1-\theta$ to emphasise that the argument does not rely on the exact value of the exponent $\xi_1$.

We are now ready to prove the lower bound part of Theorem \ref{T:large_crossing}.

\begin{proof}[Proof of Theorem \ref{T:large_crossing} -- Lower bound]
Let $\eta >0$ be a small parameter.
Let $s>1$ be large enough so that for all $t > s$, $f_\infty(t) \geq t^{-{\xi_1} - \eta}$.
Let $c>0$ be the constant appearing in Lemma \ref{L:submultiplicativity}.
By Theorem \ref{T:convergence_crossing},
\[
p_{\eps,\eps^{s(1+\eta)}} - \eps^{c\eta} \xrightarrow[ \eps \to 0]{} f_\infty(s(1+\eta)).
\]
We can therefore pick $\eps_0$ small enough so that for all $\eps \in (0,\eps_0)$, $p_{\eps, \eps^{s(1+\eta)}} - \eps^\eta \geq (1-\eta) f_\infty(s(1+\eta)) $. Let $\eps>0$ be much smaller than $\eps_0$. Let $k$ be the largest integer so that
$\tilde{\eps}_0 := \eps^{1/s^k} \leq \eps_0$, i.e. let
\[
k = \floor{ \frac{1}{\log s} \log \frac{|\log \eps|}{|\log \eps_0|} }.
\]
We apply successively Lemma \ref{L:submultiplicativity}:
\begin{align*}
p_{\eps^{1/s^k},\eps} & \geq \left( p_{\eps^{1/s^k},\eps^{(1+\eta)/s^{k-1}}} - \eps^{\eta/s^{k-1}} \right) p_{\eps^{1/s^{k-1}},\eps} \geq  \dots \geq \prod_{i=1}^{k} \left( p_{\eps^{1/s^i},\eps^{(1+\eta)/s^{i-1}}} - \eps^{\eta/s^{i-1}} \right) \\
& \geq \left( (1-\eta) f_\infty((1+\eta)s) \right)^k
\geq \left( (1-\eta) \left\{ (1+\eta)s \right\}^{-{\xi_1} - \eta} \right)^k.
\end{align*}
The main contribution comes from
\[
s^{-{\xi_1} k } \geq \left( \frac{|\log \eps|}{|\log \eps_0|} \right)^{-{\xi_1}}
\]
and we have shown that for some $\delta = \delta(\eta)$ going to zero as $\eta \to 0$,
$
p_{\tilde{\eps}_0,\eps} \geq |\log \eps|^{-{\xi_1} - \delta}.
$
Since $\tilde{\eps}_0 \geq (\eps_0)^{s}$ is macroscopic, we obtain by gluing a few extra loops and FKG inequality that
\[
p_{e^{-1},\eps} \geq |\log \eps|^{-{\xi_1} - \delta'}
\]
where $\delta'$ is arbitrarily small. It concludes the proof.
\end{proof}

\paragraph{Upper bound}

To obtain an upper bound we need to establish a version of Lemma \ref{L:submultiplicativity} in the other direction. To achieve this, we need to find two independent collections of loops whose clusters make non trivial crossings. This is very much similar in spirit to the BK inequality in percolation and we state this reverse inequality as:

\begin{lemma}\label{L:BK_inequality}
Let $s>1$ and $k_0 \geq 1$. For any $n \geq k_0+1$ and $\eps >0$,
\begin{align}
\label{E:L_BK}
p_{\eps, \eps^{s^n}} & \leq \sum_{k=1}^{k_0} p_{\eps,\eps^{s^k}} p_{\eps^{s^{k+1}},\eps^{s^n}} + \sum_{k=k_0+1}^{n-2} \Prob{ \exists \wp \in \Lc_\D^\theta \text{~crossing~} \eps^s \D \setminus \eps^{s^k} } p_{\eps^{s^{k+1}}, \eps^{s^n}} \\
& \quad + \Prob{ \exists \wp \in \Lc_\D^\theta \text{~crossing~} \eps^s \D \setminus \eps^{s^{n-1}} \D }.
\nonumber
\end{align}
\end{lemma}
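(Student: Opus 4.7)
The plan is to decompose the big-crossing event according to the deepest annulus reached by a single loop. Define
\[
K^* := \max\{k \in \{1,\dots,n-1\} : \exists \wp \in \Lc_\D^\theta \text{ crossing } \eps^s \D \setminus \eps^{s^k} \D\},
\]
with $K^* = 0$ if no such $k$ exists. The last term of \eqref{E:L_BK} controls the contribution of $\{K^* \geq n-1\}$. For $K^* = k \leq n-2$, the key geometric observation is that, since no single loop crosses $\eps^s \D \setminus \eps^{s^{k+1}} \D$, every loop of $\Lc_\D^\theta$ entering $\eps^{s^{k+1}} \D$ must by continuity remain inside $\eps^s \D$, i.e.\ lie in $\Lc_{\eps^s \D}^\theta$.

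On the event $\{K^* = k\} \cap \{\eps \partial \D \overset{\Lc_\D^\theta}{\longleftrightarrow} \eps^{s^n} \partial \D\}$, this forces every loop of the big cluster reaching $\eps^{s^n} \partial \D$ to belong to $\Lc_{\eps^s \D}^\theta$; moreover, the sub-cluster they form must also reach $\eps^{s^{k+1}} \partial \D$, otherwise it could not be chained through the forbidden annulus to the outer loops touching $\eps \partial \D$. This produces a crossing $\eps^{s^{k+1}} \partial \D \overset{\Lc_{\eps^s \D}^\theta}{\longleftrightarrow} \eps^{s^n} \partial \D$, whose probability is at most $p_{\eps^{s^{k+1}}, \eps^{s^n}}$ by monotonicity in the soup. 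Since $\{K^* = k\}$ depends only on loops exiting $\eps^s \D$, it is independent of $\Lc_{\eps^s \D}^\theta$, and the product inequality
\[
\P(K^* = k, \text{big crossing}) \leq \P(K^* = k)\,p_{\eps^{s^{k+1}}, \eps^{s^n}}
\]
follows. Bounding $\P(K^* = k) \leq \P(\exists \wp \in \Lc_\D^\theta \text{ crossing } \eps^s \D \setminus \eps^{s^k} \D)$ and summing over $k \in \{k_0+1,\dots,n-2\}$ recovers the second sum in \eqref{E:L_BK}.

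For $k \leq k_0$, the single-loop bound on $\P(K^* = k)$ is of order unity while the tighter factor $p_{\eps, \eps^{s^k}}$ is polylogarithmically small, so a finer disjoint-occurrence argument is needed. The point is that on $\{K^* = k\}$, the forbidden annulus $\eps^s \D \setminus \eps^{s^{k+1}} \D$ cleanly separates $\Lc_\D^\theta$ into two independent sub-soups---$\Lc_{\eps^s \D}^\theta$ and the loops exiting $\eps^s \D$---with the inner part of the big cluster carried entirely by the former and the outer part by the latter. A van~den~Berg--Kesten-type factorization using these two sub-soups, combined with the inner-crossing witness above, yields $\P(K^* = k, \text{big crossing}) \leq p_{\eps,\eps^{s^k}}\, p_{\eps^{s^{k+1}},\eps^{s^n}}$ and then the first sum.

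The main obstacle will be making this last BK-type step fully rigorous: although the forbidden annulus heuristically separates the outer and inner witnesses, one must verify that on $\{K^* = k\}$ the outer crossing $\eps \partial \D \overset{\Lc_\D^\theta}{\longleftrightarrow} \eps^{s^k} \partial \D$ is realizable by loops disjoint from $\Lc_{\eps^s \D}^\theta$, even when the big cluster apparently uses loops from both sub-soups. This will require a careful chain-through-loops argument exploiting continuity and the absence of loops spanning the forbidden annulus, together with the monotonicity of the relevant events under Poisson thinning.
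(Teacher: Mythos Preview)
Your decomposition via $K^*$ (the deepest single-loop penetration from $\eps^s\partial\D$) handles the last term and the second sum correctly: the event $\{K^*=k\}$ is measurable with respect to the loops that exit $\eps^s\D$, hence independent of $\Lc_{\eps^s\D}^\theta$, and the inner-crossing argument you give is valid.

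The gap is in the first sum, and it is real rather than cosmetic. Your proposed BK-type factorization requires that, on $\{K^*=k\}\cap\{\text{big crossing}\}$, the outer crossing $\eps\partial\D\leftrightarrow\eps^{s^k}\partial\D$ be witnessed by loops disjoint from $\Lc_{\eps^s\D}^\theta$, i.e.\ by loops that exit $\eps^s\D$. But this need not hold: on $\{K^*=k\}$ the loops that exit $\eps^s\D$ are only forbidden from reaching $\eps^{s^{k+1}}\D$, and a chain from $\eps\partial\D$ to $\eps^{s^k}\partial\D$ may well pass through loops living entirely in the annulus $\eps^s\D\setminus\eps^{s^{k+1}}\D$, which belong to $\Lc_{\eps^s\D}^\theta$. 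Thus the outer and inner witnesses can share loops and no disjoint-occurrence bound of the form $\Prob{K^*=k,\text{big crossing}}\le p_{\eps,\eps^{s^k}}\,p_{\eps^{s^{k+1}},\eps^{s^n}}$ is available with your $K^*$.

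The paper sidesteps this by choosing a different decomposition variable. It explores the cluster of $\eps\partial\D$ layer by layer ($\Cc_0=\eps\partial\D$, $\Cc_i=\Cc_{i-1}\cup\{\wp:\wp\cap\Cc_{i-1}\neq\varnothing\}$), stops at the first step $i_*$ at which $\Cc_{i_*}$ meets $\eps^s\partial\D$, and sets $k_*=\max\{k:\Cc_{i_*}\cap\eps^{s^k}\partial\D\neq\varnothing\}$. This $k_*$ does double duty. First, because $\Cc_{i_*}$ is a subset of the cluster of $\eps\partial\D$, one has $\{k_*\geq k\}\subset\{\eps\leftrightarrow\eps^{s^k}\}$, so $\Prob{k_*=k}\leq p_{\eps,\eps^{s^k}}$ directly---no BK argument needed. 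Second, since $\Cc_{i_*-1}$ is still outside $\eps^s\D$, any loop added at step $i_*$ that reaches $\eps^{s^k}\partial\D$ must itself cross $\eps^s\D\setminus\eps^{s^k}\D$, giving the single-loop bound used for $k>k_0$. The conditional bound on the remaining crossing then follows from the Markov property of the exploration: given $k_*=k$, the unrevealed loops are stochastically dominated by a fresh $\Lc_\D^\theta$ and have not entered $\eps^{s^{k+1}}\D$. Replacing your $K^*$ by this exploration-based $k_*$ is what closes the gap.
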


\begin{proof}
We explore the cluster of the circle $\eps \partial \D$ as follows. We start with $\Cc_0 = \eps \partial \D$ and for all $i \geq 1$, we explore the loops that touch $\Cc_{i-1}$ and we let $\Cc_i$ be the union of $\Cc_{i-1}$ and the newly revealed loops.
We stop the exploration at the first time $i_*$ that the cluster hits $\eps^{s} \partial \D$:
\[
i_* = \inf \{ i \geq 1: \Cc_i \cap \eps^{s} \partial \D \neq \varnothing \}.
\]
Let
\[
k_* = \max \{ k \geq 1: \Cc_{i_*} \cap \eps^{s^k} \partial \D \neq \varnothing \}.
\]
By definition, we have not revealed the loops that hit the disc of radius $\eps^{s^{k_*+1}}$. Therefore, for all $k = 1 \dots n-2$,
\[
\Prob{ \eps \overset{\Lc_\D^\theta}{\longleftrightarrow} \eps^{s^n} \vert k_* = k }
\leq p_{\eps^{s^{k+1}},\eps^{s^n}}.
\]
With a union bound this gives,
\begin{align*}
p_{\eps, \eps^{s^n}} & \leq 
\sum_{k=1}^{n-2} \Prob{k_* = k} p_{\eps^{s^{k+1}},\eps^{s^n}}
+ \Prob{k_* \geq n-1}.
\end{align*}
We bound $\Prob{ k_* = k} \leq \Prob{ k_* \geq k}$ and when $k \leq k_0$ we bound this probability by $p_{\eps, \eps^{s^k}}$. When $k \geq k_0 +1$, we note that $k_*$ can be larger or equal than $k$ only if there is a \emph{loop} crossing the annulus $\eps^s \D \setminus \eps^{s^k} \D$. This gives \eqref{E:L_BK}.
\end{proof}

We can now conclude this section with a proof of the upper bound in Theorem \ref{T:large_crossing}.

\begin{proof}[Proof of Theorem \ref{T:large_crossing} -- Upper bound]
Let $\delta >0$.
Let $k_0$ be a large integer and $s>1$. Let $\eps_0$ be small enough so that for all $k=1 \dots k_0$, for all $\eps \in (0,\eps_0)$,
\begin{equation}
\label{E:proof_upper_bound3}
p_{\eps, \eps^{s^k}} \leq (s^k)^{-{\xi_1} + \delta}.
\end{equation}
By Lemma \ref{L:loopmeasure_annulus_quantitative},
\[
\Prob{ \exists \wp \in \Lc_\D^\theta \text{~crossing~} \eps^{s} \D \setminus \eps^{s^{k}} \D }
= (1+o(1)) \theta s^{-(k-1)},
\]
where $o(1) \to 0$ as $\eps \to 0$, uniformly in $k \geq 2$. We can assume that $\eps_0$ has been chosen small enough so that for all $k \geq 2$, for all $\eps \in (0,\eps_0)$,
\[
\Prob{ \exists \wp \in \Lc_\D^\theta \text{~crossing~} \eps^{s} \D \setminus \eps^{s^k} \D }
\leq 2 \theta s^{-(k-1)}.
\]
We now apply Lemma \ref{L:BK_inequality} with $s$, $k_0$, $\eps_0$ (instead of $\eps$) and some integer $n \geq k_0+2$:
\[
p_{\eps_0, \eps_0^{s^n}} \leq \sum_{k=1}^{k_0} s^{-({\xi_1}-\delta) k} p_{\eps_0^{s^{k+1}},\eps_0^{s^n}} + 2\theta \sum_{k=k_0+1}^{n-2} s^{-(k-1)} p_{\eps_0^{s^{k+1}},\eps_0^{s^n}} + 2\theta s^{-(n-2)}.
\]
We assume that $k_0$ is large enough to ensure that for all $k \geq k_0+1$, $2\theta s^{-(k-1)} \leq s^{-({\xi_1} - \delta)k}$. The above inequality then implies that for all $n \geq k_0+2$,
\begin{equation}
\label{E:proof_upper_bound1}
p_{\eps_0, \eps_0^{s^n}} + s^{-({\xi_1} - \delta)n} \leq \sum_{k=1}^{n-2} s^{-({\xi_1}-\delta) k} \left( p_{\eps_0^{s^{k+1}},\eps_0^{s^n}} + s^{-({\xi_1} - \delta) (n-k-1)} \right).
\end{equation}
We now iterate this inequality until only terms of the form $p_{\eps,\eps^{s^k}}$ for some $\eps \in (0,\eps_0]$ and $k \in \{1, \dots k_0 \}$ remain. Thanks to \eqref{E:proof_upper_bound3}, we can bound these final terms by $s^{-({\xi_1} - \delta)k}$ and we obtain an estimate of the form
\begin{equation}
\label{E:proof_upper_bound2}
p_{\eps_0, \eps_0^{s^n}} + s^{-({\xi_1} - \delta)n} \leq 2 \sum \prod_i s^{-({\xi_1}-\delta) k_i} 
\end{equation}
Instead of describing precisely this summation, we make the following two key observations:
\begin{itemize}
\item
The total number of terms involved in the sum is at most $C^n$ for some $C>0$ independent of $\delta$, $s$ and $k_0$. This is due to the fact that if a sequence $(S_n)_{n \geq 1}$ of integers satisfies $S_n \leq \sum_{k=1}^{n-1} S_k$ for all $n \geq 2$, then $(S_n)$ grows at most exponentially in $n$.
\item
Each time that we use the inequality \eqref{E:proof_upper_bound1}, the sum of the ratios of the logarithms of the radii decreases by one. In \eqref{E:proof_upper_bound1}, this is saying that $k + (n-k-1) = n-1$. Because we do not use this inequality more than $n/k_0$ times, in each term of the sum in \eqref{E:proof_upper_bound2}, we have $\sum_i k_i \geq n - n/k_0$.
\end{itemize}
Putting things together, we obtain that
\[
p_{\eps_0, \eps_0^{s^n}} \leq 2 C^n s^{-({\xi_1} - \delta)(1-1/k_0) n} = 2 s^{-\{({\xi_1} - \delta)(1-1/k_0) - \frac{\log C}{\log s} \} n}.
\]
The probability $p_{e^{-1}, \eps_0^{s^n}}$ is smaller than $p_{\eps_0, \eps_0^{s^n}}$ (if $\eps_0 \leq e^{-1}$) and we have proven that for all $s>1$, $\delta >0$, $k_0 \geq 1$ large enough, for all $\eps_0$ small enough and $n \geq k_0+2$, letting $r = \eps_0^{s^n}$,
\[
p_{e^{-1}, r} \leq 2 \left( |\log \eps_0| |\log r| \right)^{-({\xi_1} - \delta)(1-1/k_0) + \frac{\log C}{\log s} }.
\]
It concludes the proof.
\end{proof}

\subsection{Percolation of large loops}

We now turn to the proof of the phase transition at $\theta = 1$ of the percolative behaviour of large loops, i.e. the proof of Theorem \ref{T:perco_large_loops}.

\begin{proof}[Proof of Theorem \ref{T:perco_large_loops}]
The proof of this result follows the exact same strategy as the proof of Theorem \ref{T:intro_convergence_crossing}. Let $\theta >0$ and $\tau \geq 0$. We consider the functions
\[
\hat{F}_T : (s,t) \in I_\leq^* \mapsto \Prob{ \{s T\} \times \mathbb{S}^1 \overset{\Lc_{\cyl,\tau}^\theta}{\longleftrightarrow} \{ t T \} \times \mathbb{S}^1 }.
\]
We show that $(\hat{F}_T)_{T \geq 1}$ is tight (in the same sense as Lemma \ref{L:crossing_tightness}) and that every subsequential limit $\hat{F}$ satisfies $S \hat{F} = \hat{F}$ where $S$ is defined in \eqref{E:mapS}.
These results are proved exactly like Lemma \ref{L:crossing_tightness} and Corollary \ref{C:crossing}. Crucially, the proofs of Lemma \ref{L:crossing_tightness} and Corollary \ref{C:crossing} only use large loops to create additional connections.
What remains to be discussed is the identification of the subsequential limit $\hat{F}$. This is where we need to distinguish different cases.

$\bullet~ \theta \in (0,1/2]$: By Corollary \ref{C:a_priori_u_bound}, $\hat{F}$ belongs to $\Gc_{1/2}$ \eqref{E:spaceG_alpha}. Lemma \ref{L:fixed-points} states that there is a unique fixed point of $S$ in $\Gc_{1/2}$. Lemma \ref{L:Bessel_explicit} then concludes the identification of the limit with the right hand side of \eqref{E:T_perco_large1}.

$\bullet~ \theta \in (1/2,1)$: To use Lemma \ref{L:fixed-points}, we need to guarantee a certain decay of $\hat{F}$ and this is where we need to assume that the threshold $\tau$ is larger than $\tau_\theta$ \eqref{E:t_theta}. We cannot use the comparison with the case $\theta = 1/2$ any more. However, the restriction to large loops allows us to use the comparison \eqref{E:1D_2D} with the 1D loop soup. Indeed, if there is a crossing of $[sT, tT] \times \mathbb{S}^1$ by a cluster of $\Lc_{\cyl,\tau}^\theta$, the longitudinal projection of the loops have to cross $[sT,tT]$. Let
\[
\lambda :=
2\pi \theta p_{\mathbb{S}^1}(\tau,1,1).
\]
By definition of $\tau_\theta$, $\lambda < 1$ and by \eqref{E:1D_2D},
\[
\theta \boldsymbol{\pi}_* \loopmeasure_\cyl(\d \wp^{\rm 1D}) \indic{T(\wp) > \tau} \leq \lambda \loopmeasure_{\R^+}(\d \wp^{\rm 1D}).
\]
Hence, the probability that there is a cluster of $\Lc_{\cyl,\tau}^\theta$ crossing $[sT, tT] \times \mathbb{S}^1$ is bounded by the probability that a cluster of $\Lc_{\R^+}^\lambda$ crosses $[sT,tT]$. The local time of a 1D loop soup having the law of a Bessel process \cite{Lupu18}, the latter is equal to
\begin{align*}
\Prob{ \forall x \in [sT, tT], R^{(\lambda)}_x > 0}
= \Prob{ \forall x \in [1,t/s], R^{(\lambda)}_x > 0}
= \frac{\sin(\pi \lambda)}{\pi} \int_{t/s-1}^\infty u^{\lambda-1} (u+1)^{-1} \d u,
\end{align*}
by Brownian scaling and Lemma \ref{L:Bessel_explicit}. In particular, any subsequential limit $\hat{F}$ of $(\hat{F}_T)_{T \geq 1}$ belongs to $\Gc_{\alpha}$ for all $\alpha \in (0,1-\lambda)$. We can then conclude the proof as in the case $\theta \in (0,1/2]$.

$\bullet~ \theta \geq 1$: If $\theta > 1$, Lemma \ref{L:fixed-points} shows that $(s,t) \mapsto 1$ is the unique fixed-point of $S$ in $[0,1]^{I_\leq^*}$. This identifies the limit. Alternatively, and in order to cover the critical case $\theta = 1$, we can proceed as follows. Let $\tau \geq 0$ be any threshold, $\theta' \in (1/2,1)$ be very close to one and $\tau' > \tau \vee \tau_{\theta'}$. We have
\begin{align*}
\liminf_{\T \to \infty} \Prob{ \{s T\} \times \mathbb{S}^1 \overset{\Lc_{\cyl,\tau}^{\theta=1}}{\longleftrightarrow} \{ t T \} \times \mathbb{S}^1 }
& \geq \lim_{\T \to \infty} \Prob{ \{s T\} \times \mathbb{S}^1 \overset{\Lc_{\cyl,\tau'}^{\theta'}}{\longleftrightarrow} \{ t T \} \times \mathbb{S}^1 }\\
& = \frac{\sin(\pi \theta')}{\pi} \int_{t/s-1}^\infty u^{\theta'-1} (u+1)^{-1} \d u.
\end{align*}
We then conclude by noticing that the above display converges to 1 as $\theta' \to 1^-$.
\end{proof}

\section{Polar sets and capacity}

This section is dedicated to the proof of Theorem  \ref{T:polar}. By Theorem \ref{T:large_crossing}, we already know with great accuracy the probability that a large cluster gets $\eps$-close to a given point. In order to prove Theorem \ref{T:polar}, we will also need a two-point estimate that we encapsulate in the following result. We will denote by
\begin{equation}
Z_\eps := \P \Big( e^{-1} \partial \D \overset{\Lc_\D^\theta}{\longleftrightarrow} \eps \D  \Big).
\end{equation}

\begin{proposition}\label{P:two-point}
Let $r_0 > 0$ be a fixed macroscopic radius.
For all $\eta>0$, there exists $C>0$ such that for all $x, y \in \D$, $\eps >0$,
\begin{equation}
\label{E:p1}
\frac{1}{Z_\eps^2} \Prob{ D(x,\eps) \overset{\Lc_\D^\theta}{\longleftrightarrow} D(y,\eps) } \leq C |\log |x-y||^{2(1-\theta) + \eta}
\end{equation}
and
\begin{equation}
\label{E:p2}
\frac{1}{Z_\eps^2} \Prob{ D(x,\eps) \overset{\Lc_\D^\theta}{\longleftrightarrow} D(y,\eps), D(x,\eps) \overset{\Lc_\D^\theta}{\longleftrightarrow} \partial D(x,r_0) }
\leq C |\log |x-y||^{1-\theta + \eta}.
\end{equation}
\end{proposition}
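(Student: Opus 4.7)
The plan is to establish both bounds by BK-type decoupling at the scale $d := |x-y|$, combined with the one-arm estimate of Theorem~\ref{T:large_crossing} applied independently around each of $x$ and $y$. First, the case $d \le 10\eps$ is trivial: $|\log d| \ge |\log \eps| - O(1)$ and $Z_\eps^{-2} = |\log \eps|^{2(1-\theta)+o(1)}$ together imply that the right-hand sides of both \eqref{E:p1} and \eqref{E:p2} exceed $1$ for $\eps$ small. I therefore assume $d \ge 10\eps$, so that $D(x,d/4)$ and $D(y,d/4)$ are disjoint.

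For \eqref{E:p1}, set $E := \{D(x,\eps) \overset{\Lc_\D^\theta}{\longleftrightarrow} D(y,\eps)\}$ and let $E_x, E_y$ denote the symmetric one-arm events $\{D(x,\eps) \overset{\Lc_\D^\theta}{\longleftrightarrow} \partial D(x,d/4)\}$ and $\{D(y,\eps) \overset{\Lc_\D^\theta}{\longleftrightarrow} \partial D(y,d/4)\}$. Any cluster witnessing $E$ must cross both disjoint annuli, so $E \subset E_x \cap E_y$. The heart of the proof is a BK-type decoupling of the form
\[
\P(E) \le C\, \P(E_x)\, \P(E_y)\, |\log d|^{o(1)},
\]
obtained by adapting the layered exploration from the proof of Lemma~\ref{L:BK_inequality}. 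Explicitly, explore the cluster of $D(x,\eps)$ outward along geometric scales $\eps, \eps s, \eps s^2, \dots$ (with $s>1$ close to $1$), at each step revealing all loops touching the currently revealed cluster, and stop when this cluster first exits $D(x,d/4)$. Conditional on the revealed set $R$, the unrevealed loops form a Brownian loop soup on $\D \setminus R$ by the restriction property of the Brownian loop measure, so the conditional probability that they further connect $D(y,\eps)$ to $R$ is dominated, via domain monotonicity, by $\P(E_y)$. Summing over the $O(\log(d/\eps))$ stopping scales and using Lemma~\ref{L:loopmeasure_annulus_quantitative} to control the contribution of single long loops directly bridging the two annuli produces only a $|\log d|^{o(1)}$ overhead. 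Combining this with the scale-invariance consequence of Theorem~\ref{T:large_crossing} that $\P(E_x),\, \P(E_y) \le C\,(|\log \eps| - |\log d|)^{-(1-\theta)+o(1)}$, and the elementary inequality $(|\log \eps| - |\log d|)^{-2(1-\theta)} \le C_\eta\, |\log \eps|^{-2(1-\theta)} |\log d|^{2(1-\theta)+\eta}$ valid uniformly for $d \in (10\eps, 1)$, yields \eqref{E:p1}.

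For \eqref{E:p2}, one may assume $r_0 > d$ (else the extra condition becomes automatic and the bound reduces to \eqref{E:p1}). The additional requirement that the cluster of $D(x,\eps)$ reach $\partial D(x,r_0)$ demands, on top of the crossing to $\partial D(x,d/4)$ already enforced by $E$, an extra one-arm crossing from scale $\sim d$ to scale $r_0$ around $x$. Applying the same BK-type decoupling a second time, now at scale $d/4$ around $x$, this extra crossing is independent (up to a $|\log d|^{o(1)}$ overhead) of the configuration already controlled in the proof of \eqref{E:p1}, and by Theorem~\ref{T:large_crossing} contributes a multiplicative factor of order $|\log d|^{-(1-\theta)+o(1)}$. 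Multiplying this by the bound from \eqref{E:p1} gives \eqref{E:p2}.

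The main obstacle is the BK-type decoupling above. Unlike in Bernoulli percolation, the Brownian loop soup does not admit a general BK inequality, so the decoupling must be carried out concretely through the layered exploration of Lemma~\ref{L:BK_inequality} together with the restriction property. The delicate point is to quantify the correlation between the neighborhoods of $x$ and $y$ induced by ``long'' loops that intersect both $D(x,d/4)$ and $D(y,d/4)$; Lemma~\ref{L:loopmeasure_annulus_quantitative} shows that their cumulative effect is subpolynomial in $\log$, which is precisely what forces the loss $\eta > 0$ in both exponents.
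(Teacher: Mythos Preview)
Your overall intuition --- decouple the connection event at the scale $d=|x-y|$ into a one-arm event near $x$ and a one-arm event near $y$ --- is exactly the backbone of the paper's proof. The gap in your argument is in the justification of the inequality
\[
\P(E)\;\le\;C\,\P(E_x)\,\P(E_y)\,|\log d|^{o(1)}.
\]
When you explore the cluster of $D(x,\eps)$ layer by layer and stop when it first exits $D(x,d/4)$, the loops revealed at the last step are only constrained to touch the previously revealed cluster inside $D(x,d/4)$; nothing prevents them from penetrating arbitrarily deep into $D(y,d/4)$, or even into $D(y,\eps)$. On that event the conditional probability of connecting $D(y,\eps)$ to the revealed set is \emph{not} bounded by $\P(E_y)$. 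Your proposed fix via Lemma~\ref{L:loopmeasure_annulus_quantitative} is insufficient: that lemma controls the mass of loops crossing a single annulus, whereas what is needed here is control of how close a loop touching the $x$-side gets to $y$. This is a genuinely two-point estimate on the loop measure; in the paper it is Lemma~\ref{L:two_annuli} and Corollary~\ref{C:3crossings}.

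The paper makes the penetration depths explicit. Let $\Lc_{x,y}$ be the set of loops touching $\partial D(x,d/2)\cup\partial D(y,d/2)$, and set $r_z=\inf\{r:\Lc_{x,y}\text{ hits }\partial D(z,r)\}$ for $z=x,y$ and $R=\sup\{r:\Lc_{x,y}\text{ hits }\partial D(x,r)\}$. Conditionally on $(r_x,r_y,R)$ the three remaining crossings (from $\eps$ to $r_z$ by loops inside $D(z,d/2)$, $z=x,y$, and from $R$ to $r_0$ by loops outside) are independent and each is bounded via Theorem~\ref{T:large_crossing}. The problem then reduces to bounding
$\E\big[\prod_{z}(\log\tfrac{d}{r_z})^{1-\theta+\eta}|\log R|^{-1+\theta+\eta}\big]$,
which is done by a discrete integration by parts in the three variables together with the two-point loop estimates of Corollary~\ref{C:3crossings}. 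Note also that the paper deduces \eqref{E:p1} from \eqref{E:p2} by FKG (an extra macroscopic arm costs $|\log d|^{-(1-\theta)+o(1)}$), which is the opposite order to yours.

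A minor point: your asymptotic $\P(E_x)\le C(|\log\eps|-|\log d|)^{-(1-\theta)+o(1)}$ is the crossing probability in a domain of size $\sim d$ around $x$ (which is indeed what appears after the correct decoupling), not the probability of $E_x$ as an event in $\Lc_\D^\theta$; for the latter the correct order is $(|\log\eps|/|\log d|)^{-(1-\theta)+o(1)}$ by Theorem~\ref{T:convergence_crossing}.
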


Note that if $x$ and $y$ are in the bulk of $\D$ (i.e. at a macroscopic distance to $\partial \D$), then the reverse inequalities also hold. The lower bounds are much simpler to establish by FKG-inequality. We do not write these details since we will only need the upper bounds.

In order to prove this proposition, we will need precise estimates on the probability of crossing two annuli with loops. These are the two-point analogues of the estimate \eqref{E:L_loopmeasure_annulus_quantitative1} and are contained in Section \ref{SS:two_annuli}. In Section \ref{SS:p} we will then prove Proposition \ref{P:two-point}. Finally, we will prove Theorem \ref{T:polar} in Section \ref{SS:polar}. The proof of Theorem \ref{T:polar} is fairly standard once Theorem \ref{T:large_crossing} and Proposition \ref{P:two-point} are established.

\subsection{Crossing two annuli with loops}\label{SS:two_annuli}

\begin{lemma}\label{L:two_annuli}
Let $A>0$ be large.
Let $x,y \in \C$ and $r_x, r_y,R >0$ such that $A |x-y| \leq R \leq 1$ and $\max(r_x,r_y) \leq |x-y|^A$. Then
\begin{equation}
\label{E:L_polar00}
\loopmeasure_{D(x,1)} ( \{ \wp \mathrm{~visiting~} D(x,r_x) \mathrm{~and~} D(y,r_y) \} )
= \left( 1 + o(1) \right) \frac{(\log |x-y|)^2}{\log \frac{1}{r_x} \log \frac{1}{r_y}}
\end{equation}
and
\begin{align}
\label{E:L_polar0}
\loopmeasure_{D(x,1)} ( \{ \wp \mathrm{~visiting~} D(x,r_x), D(y,r_y) \mathrm{~and~} \partial D(x,R) \} )
= (1+o(1)) \frac{\log \frac{1}{R} \log \frac{R}{|x-y|^2}}{\log \frac{1}{r_x} \log \frac{1}{r_y}}
\end{align}
where $o(1) \to 0$ as $A \to \infty$, uniformly in $x,y, r_x, r_y, R$ as above.
\end{lemma}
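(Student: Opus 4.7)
The plan is to prove \eqref{E:L_polar00} by a rooting decomposition analogous to Lemma \ref{L:loopmeasure_annulus_quantitative}, and then to deduce \eqref{E:L_polar0} from \eqref{E:L_polar00} via the restriction property of the Brownian loop measure, scale invariance, and inclusion--exclusion.

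For \eqref{E:L_polar00}, I would root each loop at its (almost surely unique) point closest to $x$. Since the loop visits $D(x,r_x)$, this point lies inside $D(x,r_x)$, and the analogue of \eqref{E:decomposition_loopmeasure} becomes
\begin{equation*}
\loopmeasure_{D(x,1)}(\{\wp \text{ visiting } D(x, r_x), D(y, r_y)\}) = \frac{1}{\pi} \int_0^{r_x} \rho \, d\rho \int_0^{2\pi} d\alpha \, \bubmeasure_{A_\rho}(x + \rho e^{i\alpha})(\{\wp \text{ visits } D(y, r_y)\}),
\end{equation*}
where $A_\rho := D(x, 1) \setminus \overline{D(x, \rho)}$. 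Exactly as in the proof of Lemma \ref{L:loopmeasure_annulus_quantitative}, the inner bubble measure expands through the first hitting point on $\partial D(y, r_y)$ as a double integral of products of Poisson kernels $H_{A_\rho}$ and $H_{A_\rho \setminus D(y, r_y)}$. The key estimates, valid uniformly in the angles when $\rho, r_y \leq |x-y|^A$ with $A$ large, are
\begin{equation*}
H_{A_\rho}(y + r_y e^{i\beta}, x + \rho e^{i\alpha}) = \frac{1+o(1)}{2\pi \rho} \cdot \frac{\log(1/|x-y|)}{\log(1/\rho)}
\end{equation*}
and
\begin{equation*}
\int_0^{2\pi} r_y \, H_{A_\rho \setminus D(y, r_y)}(x + \rho e^{i\alpha}, y + r_y e^{i\beta}) \, d\beta = \frac{1+o(1)}{\rho \log(1/\rho)} \cdot \frac{\log(1/|x-y|)}{\log(1/r_y)}.
\end{equation*}
The first follows from the strip representation of Lemma \ref{L:Poisson_kernel} applied to the annulus $A_\rho$, and the second from a two-scale analysis of an excursion escaping from $\partial D(x, \rho)$ to scale $|x-y|$ (giving the factor $1/(\rho \log(1/\rho))$) and then hitting $\partial D(y, r_y)$ from macroscopic distance (giving the factor $\log(1/|x-y|)/\log(1/r_y)$). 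Multiplying these two estimates, integrating in $\alpha$ over $[0,2\pi]$, and then using
\begin{equation*}
\int_0^{r_x} \frac{d\rho}{\rho \log^2(1/\rho)} = \frac{1}{\log(1/r_x)}
\end{equation*}
produces \eqref{E:L_polar00}.

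For \eqref{E:L_polar0}, the point is that because $R > |x-y| \gg r_x$, a loop visiting $D(x, r_x)$ visits $\partial D(x, R)$ if and only if it is not entirely contained in $D(x, R)$. The restriction property of the loop measure then gives
\begin{equation*}
\loopmeasure_{D(x,1)}(\{\wp \text{ visits } D(x,r_x), D(y,r_y), \partial D(x,R)\}) = \loopmeasure_{D(x,1)}(\{\wp \text{ visits } U, V\}) - \loopmeasure_{D(x, R)}(\{\wp \text{ visits } U, V\}),
\end{equation*}
with $U = D(x,r_x)$, $V = D(y,r_y)$. By scale invariance of the loop measure, \eqref{E:L_polar00} applied inside $D(x,R)$ yields $\loopmeasure_{D(x,R)}(\{\wp \text{ visits } U,V\}) = (1+o(1))(\log(R/|x-y|))^2/(\log(1/r_x)\log(1/r_y))$, where the denominator simplifies because $\log(1/R)$ is negligible compared to $\log(1/r_x)$ and $\log(1/r_y)$ when $A$ is large. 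Subtracting and using the algebraic identity $(\log(1/|x-y|))^2 - (\log(R/|x-y|))^2 = \log(1/R) \cdot \log(R/|x-y|^2)$ yields \eqref{E:L_polar0}.

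The main technical obstacle is justifying the second Poisson kernel estimate with a uniform error: the domain $A_\rho \setminus D(y, r_y)$ has three boundary components and admits no closed-form Poisson kernel. I would handle this by decomposing the excursion at its successive visits to $\partial D(x, |x-y|/2)$ and summing the resulting geometric-type series. Between two such visits, the Brownian motion behaves effectively like motion in an annulus around $x$ and like an excursion in a half-plane near each small hole, both of which admit explicit two-scale asymptotics; the geometric parameter of the series is the probability of going back to $\partial D(x,\rho)$ before exiting, which is $1-O(1/\log(1/\rho))$, and its complement gives the leading factor $1/\log(1/\rho)$.
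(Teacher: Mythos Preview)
Your approach is essentially the same as the paper's: root loops at the closest point to $x$, express the bubble measure as a product of Poisson kernels, use the same two asymptotic estimates, integrate in $\rho$, and then obtain \eqref{E:L_polar0} from \eqref{E:L_polar00} by exactly the restriction/scaling/subtraction argument you describe. The only substantive difference is in justifying the second Poisson kernel estimate (the hitting probability in the domain with three boundary components $\partial D(x,\rho)$, $\partial D(y,r_y)$, $\partial D(x,1)$): the paper simply quotes the exact formula
\[
\mathbb{P}_w\big(\tau_{D(y,r_y)} < \tau_{\rho\partial\D} \wedge \tau_{\partial\D}\big)
= (1+o(1))\,\frac{\log(1/|x-y|)}{\log(1/r_y)\log(1/\rho) - (\log(1/|x-y|))^2}
\]
from \cite[Lemma~2.3]{jegoBMC} and then lets the $(\log(1/|x-y|))^2$ correction be absorbed into the $o(1)$ (since $\log(1/\rho),\log(1/r_y)\ge A\log(1/|x-y|)$), whereas you sketch a direct argument. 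Your sketch has a small confusion in the last paragraph: the excursion is \emph{killed} upon return to $\partial D(x,\rho)$, so there is no geometric series over returns there; the factor $1/(\rho\log(1/\rho))$ is simply the boundary derivative of the escape probability to $\partial D(x,|x-y|/2)$, and the geometric correction you need is instead for the competition between hitting $D(y,r_y)$ and hitting $D(x,\rho)$ once the path is at scale $|x-y|$, both of which have probability $O(1/A)$ from there.
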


\begin{proof}
By translation invariance we can assume that $x=0$.
Rooting the loops at the closest point to $x$, we have
\begin{equation}
\label{E:L_polar5}
\loopmeasure_{\D} = \frac{1}{\pi} \int_0^1 r \d r \int_0^{2\pi} \d \theta \bubmeasure_{\D \setminus r \bar{\D}}(r e^{i\theta}).
\end{equation}
To justify this decomposition, we can for instance first root the loops in the domain $\C \setminus \bar{\D}$ at the point whose distance to the origin is maximal:
\[
\loopmeasure_{\C \setminus \bar{\D}} = \frac{1}{\pi} \int_1^\infty R \d R \int_0^{2\pi} \d \theta' \bubmeasure_{R \D \setminus \bar{\D}}(r e^{i\theta'}).
\]
This decomposition follows from the whole plane version of \eqref{E:decomposition_loopmeasure} and then by the restriction property of the loop and bubble measures.
Let $f: z \mapsto -1/z$. By conformal invariance of the loop measure,
\[
\loopmeasure_\D = f \circ \loopmeasure_{\C \setminus \bar{\D}} = \frac{1}{\pi} \int_1^\infty R \d R \int_0^{2\pi} \d \theta' f \circ \bubmeasure_{R \D \setminus \D}(r e^{i\theta'}).
\]
The bubble measure is conformally covariant: $f \circ \bubmeasure_{R \D \setminus \bar{\D}}(r e^{i\theta'}) = \frac{1}{R^4} \bubmeasure_{\D \setminus \frac{1}{R} \bar{D}}(- \frac{1}{R} e^{-i\theta'})$. The change of variable $r = 1/R$ and $\theta = \pi - \theta'$ finishes the proof of \eqref{E:L_polar5}.

Let $r \in (0,r_x)$ and $\theta \in [0,2\pi]$. We compute
\begin{align*}
\bubmeasure_{\D \setminus r \bar{\D}}(r e^{i\theta})( \{ \wp \text{~intersecting~} D(y,r_y) \} )
= \pi \int_{\partial D(y,r_y)} \d z_y
H_{\D \setminus (r \D \cup D(y,r_y)}(r e^{i \theta}, z_y)
H_{\D \setminus r \D}(z_y, re^{i \theta}).
\end{align*}
For all $z_y \in \partial D(y, r_y)$,
\[
H_{\D \setminus r \D}(z_y, re^{i \theta})
= (1+o(1)) \frac{1}{2\pi r} \frac{\log |x-y|}{\log r}.
\]
We can then integrate
\begin{align}
\label{E:L_polar1}
\int_{\partial D(y,r_y)} \d z_y
H_{\D \setminus (r \D \cup D(y,r_y)}(r e^{i \theta}, z_y)
& = \lim_{\eps \to 0} \frac{1}{\eps} \PROB{(r+\eps)e^{i \theta}}{ \tau_{D(y,r_y)} < \tau_{r \D} \wedge \tau_{\partial \D} }.
\end{align}
To compute the above hitting probability, we first stop the Brownian path at the first exit time of $e r \D$:
\begin{align*}
\PROB{(r+\eps)e^{i \theta}}{ \tau_{D(y,r_y)} < \tau_{r \D} \wedge \tau_{\partial \D} }
= \EXPECT{(r+\eps)e^{i \theta}}{ \indic{ \tau_{er \partial \D} < \tau_{r \D} } \PROB{B_{\tau_{e r \partial \D}}}{ \tau_{D(y,r_y)} < \tau_{r \D} \wedge \tau_{\partial \D} } }.
\end{align*}
Starting from any point $w \in er \partial \D$, the probability that a Brownian path hits the small disc $D(y,r_y)$ before exiting the domain $\D$ and before hitting $r \D$ can be accurately estimated:
\begin{equation}
\label{E:L_polar3}
\PROB{w}{ \tau_{D(y,r_y)} < \tau_{r \D} \wedge \tau_{\partial \D}}
= (1+o(1)) \frac{\log \frac{1}{|x-y|}}{\log \frac{1}{r_y} \log \frac{1}{r} - \left( \log \frac{1}{|x-y|} \right)^2}.
\end{equation}
See \cite[Lemma 2.3]{jegoBMC}.
This implies that the integral in \eqref{E:L_polar1} is equal to
\begin{align*}
& (1+o(1)) \frac{\log \frac{1}{|x-y|}}{\log \frac{1}{r_y} \log \frac{1}{r} - \left( \log \frac{1}{|x-y|} \right)^2}
\lim_{\eps \to 0} \frac{1}{\eps} \PROB{(r+\eps)e^{i \theta}}{ \tau_{er \partial \D} < \tau_{r \D} } \\
& = (1+o(1)) \frac{1}{r} \frac{\log \frac{1}{|x-y|}}{\log \frac{1}{r_y} \log \frac{1}{r} - \left( \log \frac{1}{|x-y|} \right)^2}.
\end{align*}
Putting everything together, we obtain that
\begin{align*}
& \loopmeasure_\D( \{ \wp \text{~intersecting~} r_x \D \text{~and~} D(y,r_y) \} ) \\
& = (1+o(1)) \Big( \log \frac{1}{|x-y|} \Big)^2 \int_0^{r_x} \frac{\d r}{r}
\frac{1}{\log \frac{1}{r} \big( \log \frac{1}{r_y} \log \frac{1}{r} - \big( \log \frac{1}{|x-y|} \big)^2 \big) } \\
& = - (1+o(1)) \log \left( 1 - \frac{(\log |x-y|)^2}{\log \frac{1}{r_x} \log \frac{1}{r_y}} \right)
= (1+o(1)) \frac{(\log |x-y|)^2}{\log \frac{1}{r_x} \log \frac{1}{r_y}}.
\end{align*}
This concludes the proof of \eqref{E:L_polar00}.

\eqref{E:L_polar0} follows from the observation that
\begin{align*}
& \loopmeasure_{\D} ( \{ \wp \mathrm{~visiting~} r_x \D, D(y,r_y) \mathrm{~and~} R \partial \D \} ) \\
& = \loopmeasure_{\D} ( \{ \wp \mathrm{~visiting~} r_x \D, D(y,r_y) \} ) - \loopmeasure_{R\D} ( \{ \wp \mathrm{~visiting~} r_x \D, D(y,r_y) \} ).
\end{align*}
We have just computed the first right hand side term. By scaling, the second term is equal to
\[
(1+o(1)) \frac{\big(\log \frac{R}{|x-y|} \big)^2}{\log \frac{R}{r_x} \log \frac{R}{r_y}}
= (1+o(1)) \frac{\big(\log \frac{R}{|x-y|} \big)^2}{\log \frac{1}{r_x} \log \frac{1}{r_y}},
\]
implying that 
\begin{align*}
& \loopmeasure_{\D} ( \{ \wp \mathrm{~visiting~} r_x \D, D(y,r_y) \mathrm{~and~} R \partial \D \} )
= (1+o(1)) \frac{\log \frac{1}{R} \log \frac{R}{|x-y|^2}}{\log \frac{1}{r_x} \log \frac{1}{r_y}}.
\end{align*}
This concludes the proof of \eqref{E:L_polar0}.
\end{proof}

\begin{corollary}\label{C:3crossings}
Let $A>0$ large enough and $x,y, r_x, r_y, R$ be as in the statement of Lemma \ref{L:two_annuli}.
There exists $C= C(A)>0$ such that
the probability that there are loops crossing $D(x,|x-y|/2) \setminus D(x,r_x)$, $D(y,|x-y|/2) \setminus D(y,r_y)$ and $D(x,R) \setminus (D(x,|x-y|/2) \cup D(y,|x-y|/2))$ is bounded by
\begin{equation}
\label{E:3crossings}
C \frac{\log \frac{1}{R} \log \frac{R}{|x-y|^2}}{\log \frac{1}{r_x} \log \frac{1}{r_y}}.
\end{equation}
\end{corollary}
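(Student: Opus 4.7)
The plan is to leverage the Poisson point process (PPP) structure of $\Lc_\D^\theta$. For each loop $\wp$, let $P_i(\wp) \in \{0,1\}$ denote whether $\wp$ crosses the $i$-th annulus, and write $E_i = \{\exists \wp \in \Lc_\D^\theta : P_i(\wp) = 1\}$; the target event is $E_1 \cap E_2 \cap E_3$, with the three crossings possibly realized by the same loop or by distinct loops. Set $\mu_i := \loopmeasure_\D(\{P_i = 1\})$, $\mu_{ij} := \loopmeasure_\D(\{P_i = 1, P_j = 1\})$, and $\mu_{123} := \loopmeasure_\D(\{P_1 = P_2 = P_3 = 1\})$.

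The first step is to bound the probability by a tuple-counting expectation. Let $N$ be the number of ordered triples $(\wp_1,\wp_2,\wp_3) \in (\Lc_\D^\theta)^3$ (repetitions allowed) such that $P_i(\wp_i) = 1$ for each $i$; then $E_1 \cap E_2 \cap E_3 \subseteq \{N \geq 1\}$, so $\P(E_1 \cap E_2 \cap E_3) \leq \E[N]$. The multivariate Campbell formula for PPPs expands $\E[N]$ as a sum over set-partitions of $\{1,2,3\}$:
\[
\E[N] = \theta\, \mu_{123} + \theta^2 \bigl(\mu_{12}\mu_3 + \mu_{13}\mu_2 + \mu_{23}\mu_1\bigr) + \theta^3 \mu_1 \mu_2 \mu_3,
\]
which is the analogue of the BK upper bound for three increasing events in a PPP.

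The second step is to estimate each of the five loop-measure factors. The dominant term $\theta \mu_{123}$ is controlled directly by \eqref{E:L_polar0}: any loop realizing all three crossings visits $D(x,r_x)$, $D(y,r_y)$ and $\partial D(x,R)$, and \eqref{E:L_polar0} gives exactly the target bound. The two-point measure $\mu_{12}$ is bounded by \eqref{E:L_polar00} (the exit conditions for $D(x,|x-y|/2)$ and $D(y,|x-y|/2)$ being automatic once $r_x, r_y \leq |x-y|^A$), while the mixed measures $\mu_{13}, \mu_{23}$ and the single measures $\mu_i$ are all single-annulus crossings handled by Lemma \ref{L:loopmeasure_annulus_quantitative} ($\mu_3$ requiring in addition a union bound over the two inner-boundary components, effective thanks to $R \geq A|x-y|$). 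In the shorthand $l_{xy} := \log|x-y|^{-1}$, $l_R := \log R^{-1}$, $l_x := \log(|x-y|/r_x)$, $l_y := \log(|x-y|/r_y)$, these estimates take the form $\mu_1 \leq C l_{xy}/l_x$, $\mu_2 \leq C l_{xy}/l_y$, $\mu_3 \leq C l_R/l_{xy}$, $\mu_{12} \leq C l_{xy}^2/(l_x l_y)$, $\mu_{13} \leq C l_R/l_x$, $\mu_{23} \leq C l_R/l_y$, $\mu_{123} \leq C l_R(2l_{xy}-l_R)/(l_x l_y)$; a short comparison (using $l_R \leq l_{xy}$, hence $l_{xy} \leq 2l_{xy} - l_R$) then shows that each of the five terms in the decomposition is at most a constant multiple of the target $l_R(2l_{xy}-l_R)/(l_x l_y)$.

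The main obstacle I anticipate is the bookkeeping in the Campbell/BK-type decomposition together with the identification of each $\mu$-term as a loop measure of a ``visiting'' event falling under Lemma \ref{L:two_annuli} or Lemma \ref{L:loopmeasure_annulus_quantitative}; in particular, the automatic-exit arguments that reduce ``crossings'' to ``visits'' rely on the regime assumption $r_x, r_y \leq |x-y|^A$ (so that the $r$-discs lie deep inside the $|x-y|/2$-discs) and should be checked case by case. The individual loop-measure estimates themselves are each small computations that follow from the quantitative Poisson kernel and bubble-measure calculations already carried out.
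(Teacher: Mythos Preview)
Your Campbell-formula decomposition is exactly the case analysis the paper performs (the paper lists the five scenarios by hand rather than invoking Campbell, but the resulting five terms coincide), and your reductions of the $\mu$-factors to Lemmas~\ref{L:loopmeasure_annulus_quantitative} and~\ref{L:two_annuli} match the paper's. There is, however, a gap in your estimate for $\mu_3$ (the paper's own proof contains the same slip in its second and fifth bullets). You claim $\mu_3 \leq C\, l_R/l_{xy}$, but Lemma~\ref{L:loopmeasure_annulus_quantitative} applied to $D(x,R)\setminus D(x,3|x-y|/2)$ actually yields
\[
\mu_3 \;\leq\; C\,\log\frac{l_{xy}}{\,l_{xy}-l_R\,}.
\]
The hypothesis $R \geq A|x-y|$ only guarantees $l_{xy}-l_R \geq \log A$; when $R$ is near that lower end and $|x-y|$ is small, $\mu_3$ is of order $\log l_{xy}$, and the products $\mu_{12}\mu_3$ and $\mu_1\mu_2\mu_3$ then exceed the target \eqref{E:3crossings} by that unbounded factor.

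The repair is short: split on whether $l_R \leq l_{xy}/2$. If so, $l_{xy}-l_R \geq l_{xy}/2$ and your claimed bound $\mu_3 \leq C\, l_R/l_{xy}$ does hold (since $\log\frac{1}{1-u}\leq 2u$ for $u\in[0,\tfrac12]$), so the Campbell estimate goes through verbatim. If instead $l_R > l_{xy}/2$, simply drop $E_3$: the pair bound
\[
\P(E_1\cap E_2\cap E_3)\;\leq\;\P(E_1\cap E_2)\;\leq\;\theta\mu_{12}+\theta^2\mu_1\mu_2\;\leq\;C\,\frac{l_{xy}^2}{\log\tfrac{1}{r_x}\log\tfrac{1}{r_y}}
\]
suffices, since in this regime $l_R(2l_{xy}-l_R) \geq \tfrac12 l_{xy}^2$.
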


We emphasise that the three crossings described above can be realised by a single loop, or by two or three different loops.

\begin{proof}
We handle each of the following scenarios separately:

$\bullet$ A single loop makes the three crossings.
By \eqref{E:L_polar0}, the probability of that event is at most \eqref{E:3crossings} for some $C>0$.

$\bullet$ One loop crosses $D(x,|x-y|/2) \setminus D(x,r_x)$ and $D(y,|x-y|/2) \setminus D(y,r_y)$ and another loop crosses $D(x,R) \setminus (D(x,|x-y|/2) \cup D(y,|x-y|/2))$.
In that case, there must be a loop visiting both $D(x,r_x)$ and $D(y,r_y)$ and another loop crossing $D(x,R) \setminus D(x,3|x-y|/2)$.
Using that for any (crossing) events $A, B$,
\[
\Prob{ \exists \wp \neq \wp' \in \Lc_\D^\theta: \wp \in A, \wp' \in B }
\leq \E \Big[ \sum_{\wp \neq \wp'} \indic{ \wp \in A} \indic{\wp' \in B} \Big]
= \theta^2 \loopmeasure_D(A) \loopmeasure_D(B),
\]
we can bound the probability of that event by
\begin{align*}
\theta^2 \loopmeasure_D ( \{ \wp \text{~visiting~}  D(x,r_x) \text{~and~} D(y,r_y) \} )
\loopmeasure_D ( \{ \wp \text{~crossing~} D(x,R) \setminus D(x,3|x-y|/2) \} ).
\end{align*}
By \eqref{E:L_loopmeasure_annulus_quantitative1} and \eqref{E:L_polar00}, this is at most
\[
C \frac{(\log|x-y|)^2}{\log \frac{1}{r_x} \log \frac{1}{r_y}} \times \frac{\log \frac{1}{R}}{\log \frac{1}{|x-y|}} \leq C \frac{\log \frac{1}{R} \log \frac{R}{|x-y|^2}}{\log \frac{1}{r_x} \log \frac{1}{r_y}}.
\]

$\bullet$ One loop crosses $D(x,|x-y|/2) \setminus D(x,r_x)$ and $D(x,R) \setminus (D(x,|x-y|/2) \cup D(y,|x-y|/2))$ and another loop crosses $D(y,|x-y|/2) \setminus D(y,r_y)$. As above, the probability of that event can be bounded by
\begin{align*}
& \theta^2 \loopmeasure_D (\{ \wp \text{~crossing~} D(x,R) \setminus D(x,r_x) \})
\loopmeasure_D (\{ \wp \text{~crossing~} D(y,|x-y|/2) \setminus D(y,r_y) \}) \\
& \leq C \frac{\log \frac{1}{R}}{\log \frac{1}{r_x}} \times \frac{\log \frac{1}{|x-y|}}{\log \frac{1}{r_y}} C \frac{\log \frac{1}{R} \log \frac{R}{|x-y|^2}}{\log \frac{1}{r_x} \log \frac{1}{r_y}}
\end{align*}
using \eqref{E:L_loopmeasure_annulus_quantitative1}.

$\bullet$ One loop crosses $D(y,|x-y|/2) \setminus D(y,r_x)$ and $D(x,R) \setminus (D(x,|x-y|/2) \cup D(y,|x-y|/2))$ and another loop crosses $D(x,|x-y|/2) \setminus D(x,r_y)$. The probability of that event can be bounded in a similar way as the previous one.

$\bullet$ The three crossings are made by three different loops. The probability of that event is at most
\begin{align*}
& \theta^3 \loopmeasure_D (\{ \wp \text{~crossing~} D(x,|x-y|/2) \setminus D(x,r_x) \})
\loopmeasure_D (\{ \wp \text{~crossing~} D(y,|x-y|/2) \setminus D(y,r_y) \}) \\
& \times \loopmeasure_D (\{ \wp \text{~crossing~} D(x,R) \setminus D(x,3|x-y|/2) \} )
\end{align*}
which is again bounded by the same expression.
\end{proof}

\subsection{Proof of Proposition \ref{P:two-point}}\label{SS:p}

\begin{proof}[Proof of Proposition \ref{P:two-point}]
Let us first notice that \eqref{E:p1} is a consequence of \eqref{E:p2}. Indeed, by FKG inequality the left hand side of \eqref{E:p2} is at least the left hand side of \eqref{E:p1} times the probability that $D(x,|x-y|/4) \overset{\Lc_\D^\theta}{\longleftrightarrow} \partial D(x,r_0)$ and that there is a loop included in $D(x,|x-y|/2)$ surrounding $D(x,|x-y|/4)$. By Theorem \ref{T:large_crossing}, this shows that the left hand side of \eqref{E:p2} is at least the left hand side of \eqref{E:p1} times $|\log |x-y||^{-1+\theta+o(1)}$.

We now focus on proving \eqref{E:p2}.
Let $\delta >0$ small. First of all, if $|x-y| \leq \exp \left( - |\log \eps|^{1-\delta} \right)$, then we simply bound the left hand side of \eqref{E:p2} by
\[
\frac{1}{Z_\eps^2} \Prob{ D(x,\eps) \overset{\Lc_\D^\theta}{\longleftrightarrow} \partial D(x,r_0) } \leq \frac{C}{Z_\eps} \leq |\log \eps|^{1-\theta+o(1)} \leq |\log |x-y||^{(1-\theta)/(1-2\delta)}.
\]
In the remaining of the proof, we focus on the case $|x-y| \geq \exp \left( - |\log \eps|^{1-\delta} \right)$.
For $z \in \{x,y\}$, let $D_z$ be the disc $D(z,|x-y|/2)$, and $\Lc_{x,y}$ the collection of loops in $\Lc_\D^\theta$ that intersect $\partial D_x \cup \partial D_y$. Define also
\[
r_z = \inf \{ r >0: \exists \wp \in \Lc_{x,y} \mathrm{~intersecting~} \partial D(z,r) \}, \quad \quad z = x,y
\]
and
\[
R = \sup \{ r >0: \exists \wp \in \Lc_{x,y} \mathrm{~intersecting~} \partial D(x,r) \}.
\]
Conditioned on $r_x, r_y$ and $R$, in order to have $D(x,\eps) \overset{\Lc_\D^\theta}{\longleftrightarrow} D(y,\eps)$ and $D(x,\eps) \overset{\Lc_\D^\theta}{\longleftrightarrow} \partial D(x,r_0)$, there must be a cluster of loops included in $D_z$ that crosses $D(z,r_z) \setminus D(z,\eps)$, $z = x,y$ and a cluster of loops included in $D \setminus (\bar{D}_x \cup \bar{D}_y)$ that crosses $D(x,r_0) \setminus D(x,R)$.
By independence of the different involved sets of loops, we therefore obtain that the left hand side of \eqref{E:p2} is at most
\begin{align*}
& \frac{1}{Z_\eps^2} \Expect{ \prod_{z=x,y} \Prob{ D(z,\eps) \overset{\Lc_{D_z}^\theta}{\longleftrightarrow} \partial D(z,r_z) \Big\vert r_z } \Prob{ D(x,R) \overset{\Lc_{\D \setminus (\bar{D}_x \cup \bar{D}_y)}^\theta}{\longleftrightarrow} \partial D(x,r_0) \Big\vert R } }.
\end{align*}
Let $z \in \{x,y\}$.  By scaling,
\begin{align*}
\frac{1}{Z_\eps} \Prob{ D(z,\eps) \overset{\Lc_{D_z}^\theta}{\longleftrightarrow} \partial D(z,r_z) \Big\vert r_z }
= \frac{1}{Z_\eps} \Prob{ \frac{2\eps}{|x-y|} \D \overset{\Lc_{\D}^\theta}{\longleftrightarrow} \frac{2r_z}{|x-y|} \partial \D \Big\vert r_z }.
\end{align*}
Let us assume for a moment that $r_z \leq e^{-1} |x-y| / 2$. By definition of $Z_\eps$, the above right hand side term is the inverse of the probability that $\eps \D$ is connected to $e^{-1} \partial \D$, conditionally on the fact that $2 \eps/ |x-y| \D$ is connected to $2r_z/|x-y|$. By FKG inequality, this is at most
\begin{equation}
\label{E:p4}
\Prob{ \eps \D \overset{\Lc_{\D}^\theta}{\longleftrightarrow} 2 \eps/ |x-y| \partial \D}^{-1}
\Prob{ \frac{2r_z}{|x-y|} \D \overset{\Lc_{\D}^\theta}{\longleftrightarrow} e^{-1} \partial \D}^{-1}
\end{equation}
times the inverse of two welding events that happen with high probability which ensure that the connections have been made at $2 \eps/ |x-y| \partial \D$ and at $\frac{2r_z}{|x-y|}$ (similarly to some previous detailed cases). Because $|x-y| \geq \exp \left( - |\log \eps|^{1-\delta} \right)$, the first probability in \eqref{E:p4} is very close to 1. The second probability is at least $c ( \log \frac{|x-y|}{r_z} )^{-1+\theta-\eta}$ by Theorem \ref{T:large_crossing}. Hence,
\[
\frac{1}{Z_\eps} \Prob{ D(z,\eps) \overset{\Lc_{D_z}^\theta}{\longleftrightarrow} \partial D(z,r_z) \Big\vert r_z } \leq C \left( \log \frac{|x-y|}{r_z} \right)^{1-\theta + \eta}.
\]
Note that this inequality still holds if $r_z \geq e^{-1}|x-y|/2$, with $\log \frac{|x-y|}{r_z}$ replaced by $\log \frac{10|x-y|}{r_z}$. To ease the presentation, we will keep writing $\log \frac{|x-y|}{r_z}$ in the rest of the proof. Alternatively, we will always assume that the logarithms we write are uniformly bounded from below by some positive constant.
Moreover, by Theorem \ref{T:large_crossing},
\[
\Prob{ D(x,R) \overset{\Lc_{D \setminus (\bar{D}_x \cup \bar{D}_y)}^\theta}{\longleftrightarrow} \partial D(x,r_0) \vert R } \leq \Prob{ D(x,R) \overset{\Lc_{D}^\theta}{\longleftrightarrow} \partial D(x,r_0) \vert R }
\leq C | \log R |^{-1+\theta + \eta}.
\]
Putting everything together, we have obtained that the left hand side of \eqref{E:p2} is at most
\begin{equation}
\label{E:p5}
C \Expect{ \prod_{z=x,y} \left( \log \frac{|x-y|}{r_z} \right)^{1-\theta + \eta}
| \log R |^{-1+\theta + \eta} }.
\end{equation}

The rest of the proof is dedicated to bounding the above expression. We have already laid the groundwork for this computation in Section \ref{SS:two_annuli}.
For $k_x, k_y, k_R \geq 0$ and $j_x, j_y, j_R \in \{1, +\infty\}$, let us denote by
\[
p_{k_x,k_y,k_R}^{t_x,t_y,t_R} = \Prob{ \forall z = x,y, \frac{2r_z}{|x-y|} \in (e^{-k_z-t_z},e^{-k_z}], \frac{2R}{|x-y|} \in [e^{k_R}, e^{k_R+t_R}) }.
\]
We will also denote by $p_{k_x,k_y}^{t_x,t_y}$ (and any other type of combination) the probability of the above event where nothing is required for $R$, i.e. $p_{k_x,k_y}^{t_x,t_y} = p_{k_x,k_y,0}^{t_x,t_y,\infty}$. With these notations, we can bound \eqref{E:p5} by
\begin{align*}
C \sum_{k_x,k_y,k_R} (k_xk_y)^{1-\theta+\eta} ( |\log |x-y|| - k_R)^{-1+\theta + \eta}
p_{k_x,k_y,k_R}^{1,1,1}
\end{align*}
where the sum runs over $k_x, k_y \geq 0$ and $k_R \in \{0, \dots, \ceil{|\log |x-y||}\}$. We are going to perform a discrete integration by parts with respect to each of the three variables $k_x, k_y, k_R$. We first integrate by parts the $k_R$ variable ``integrating'' $p_{k_x,k_y,k_R}^{1,1,1} = p_{k_x,k_y,k_R}^{1,1,\infty} - p_{k_x,k_y,k_R+1}^{1,1,\infty}$. We will ``differentiate'' $( |\log |x-y|| - k_R)^{-1+\theta + \eta}$ and bound:
\[
( |\log |x-y|| - k_R)^{-1+\theta + \eta} - ( |\log |x-y|| - k_R+1)^{-1+\theta + \eta} \leq C ( |\log |x-y|| - k_R)^{-2+\theta + \eta}.
\]
This leads to the following upper bound for \eqref{E:p5}:
\begin{align*}
& C \sum_{k_x,k_y,k_R} (k_xk_y)^{1-\theta+\eta} ( |\log |x-y|| - k_R )^{-2+\theta + \eta} p_{k_x,k_y,k_R}^{1,1,\infty} \\
& + C |\log|x-y||^{-1+\theta+\eta} \sum_{k_x,k_y} (k_xk_y)^{1-\theta+\eta} p_{k_x,k_y}^{1,1}.
\end{align*}
Similarly, integrating by parts with respect to $k_y$ shows that this is further bounded by
\begin{align*}
& C \sum_{k_x,k_y,k_R} k_x^{1-\theta+\eta} k_y^{-\theta+\eta} ( |\log |x-y|| - k_R )^{-2+\theta + \eta} p_{k_x,k_y,k_R}^{1,\infty,\infty} \\
& + C \sum_{k_x,k_R} k_x^{1-\theta+\eta} ( |\log |x-y|| - k_R )^{-2+\theta + \eta} p_{k_x,k_R}^{1,\infty} \\
& + C |\log|x-y||^{-1+\theta+\eta} \sum_{k_x,k_y} k_x^{1-\theta+\eta} k_y^{-\theta+\eta} p_{k_x,k_y}^{1,\infty} + C |\log|x-y||^{-1+\theta+\eta} \sum_{k_x} k_x^{1-\theta+\eta} p_{k_x}^{1}.
\end{align*}
Finally, we integrate by parts with respect to $k_x$ to find that \eqref{E:p5} is at most
\begin{align*}
& C \sum_{k_x,k_y,k_R} k_x^{-\theta+\eta} k_y^{-\theta+\eta} ( |\log |x-y|| - k_R )^{-2+\theta + \eta} p_{k_x,k_y,k_R}^{\infty,\infty,\infty} \\
& + C \sum_{k_y,k_R} k_y^{-\theta+\eta} ( |\log |x-y|| - k_R )^{-2+\theta + \eta} p_{k_y,k_R}^{\infty,\infty} \\
& + C \sum_{k_x,k_R} k_x^{-\theta+\eta} ( |\log |x-y|| - k_R )^{-2+\theta + \eta} p_{k_x,k_R}^{\infty,\infty} \\
& + C \sum_{k_R} ( |\log |x-y|| - k_R )^{-2+\theta + \eta} p_{k_R}^{\infty} \\
& + C |\log|x-y||^{-1+\theta+\eta} \sum_{k_x,k_y} k_x^{-\theta+\eta} k_y^{-\theta+\eta} p_{k_x,k_y}^{\infty,\infty}
+ C |\log|x-y||^{-1+\theta+\eta} \sum_{k_y} k_y^{-\theta+\eta} p_{k_y}^{\infty} \\
& + C |\log|x-y||^{-1+\theta+\eta} \sum_{k_x} k_x^{-\theta+\eta} p_{k_x}^{\infty}
+ C |\log|x-y||^{-1+\theta+\eta}.
\end{align*}
The proof of \eqref{E:p2} then follows from the following claim:

\begin{claim}\label{claim}
There exists $C>0$ such that
\begin{gather}
\label{E:c1}
\sum_{k_z} k_z^{-\theta+\eta} p_{k_z}^{\infty} \leq C |\log|x-y||^{1-\theta+\eta}, \quad z = x,y; \\
\label{E:c2}
\sum_{k_z,k_R} k_z^{-\theta+\eta} ( |\log |x-y|| - k_R )^{-2+\theta + \eta} p_{k_z,k_R}^{\infty,\infty} \leq C |\log |x-y||^{2\eta}, \quad z=x,y; \\
\label{E:c3}
\sum_{k_x,k_y} k_x^{-\theta+\eta} k_y^{-\theta+\eta} p_{k_x,k_y}^{\infty,\infty}
\leq C |\log|x-y||^{2-2\theta+2\eta}; \\
\label{E:c4}
\sum_{k_x,k_y,k_R} (k_x k_y)^{-\theta+\eta} ( |\log |x-y|| - k_R )^{-2+\theta + \eta} p_{k_x,k_y,k_R}^{\infty,\infty,\infty} \leq C |\log|x-y||^{1-\theta+3\eta}.
\end{gather}
\end{claim}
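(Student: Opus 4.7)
Write $L := |\log|x-y||$. My plan is to derive pointwise upper bounds for each probability $p_\cdot^{\infty,\cdot}$ via decomposition into contributions from one or more loops, using Lemma \ref{L:loopmeasure_annulus_quantitative}, Lemma \ref{L:two_annuli}, and Corollary \ref{C:3crossings}, together with the Poisson BK-type inequality $\Prob{\exists\,\wp_1,\ldots,\wp_m\in\Lc_\D^\theta\ \text{distinct},\ \wp_i\in E_i}\leq\theta^m\prod_i\loopmeasure_\D(E_i)$. Each of the four bounds then follows by summing these estimates after splitting the indices at their natural scales ($k_z\sim L$ and $k_R\sim L/2$).

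For the one- and two-point events, the topological observation that any loop of $\Lc_{x,y}$ reaching $D(x, e^{-k_x}|x-y|/2)$ must cross $D_x\setminus D(x, e^{-k_x}|x-y|/2)$ (since this inner disc is separated from $\partial D_y$ by $\partial D_x$) gives, via Lemma \ref{L:loopmeasure_annulus_quantitative}, $p_{k_x}^\infty\leq C\min(1, L/k_x)$; splitting at $k_x=L$ delivers \eqref{E:c1}. For \eqref{E:c3}, bounding the same-loop contribution to $p_{k_x, k_y}^{\infty,\infty}$ by Lemma \ref{L:two_annuli} ($\leq CL^2/((L+k_x)(L+k_y))$) and the distinct-loop contribution by BK ($\leq C(L/k_x)(L/k_y)$) yields $p_{k_x,k_y}^{\infty,\infty}\leq C\min(1, L^2/(k_xk_y))$ and, after a factored product split, $CL^{2-2\theta+2\eta}$. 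For \eqref{E:c2}, the analogous decomposition gives $p_{k_x, k_R}^{\infty,\infty}\leq C\min(1, L/k_x)\min(1, \log(L/k_R))$, where $\loopmeasure_\D(E_R)\leq C\log(L/k_R)$ comes from Lemma \ref{L:loopmeasure_annulus_quantitative} applied to the fact that any loop in $\Lc_{x,y}$ reaching $\partial D(x, e^{k_R}|x-y|/2)$ crosses an annulus of inner radius $O(|x-y|)$ and outer radius $e^{k_R}|x-y|/2$. Splitting the sum at $k_R = L/2$ and sharpening $\log(L/k_R)\leq 2(L-k_R)/L$ in the range $k_R\geq L/2$ to obtain $p_{k_x,k_R}^{\infty,\infty}\leq C\min(1, (L-k_R)/k_x)$, both halves then contribute $CL^{2\eta}$: the first via $(L/2)\cdot CL^{-2+\theta+\eta}\cdot CL^{1-\theta+\eta}$, and the second (with $j:=L-k_R$) via $\sum_{j\geq 1} j^{-2+\theta+\eta}\cdot Cj^{1-\theta+\eta} = C\sum_j j^{-1+2\eta}\leq CL^{2\eta}$.

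For \eqref{E:c4}, Corollary \ref{C:3crossings} applies in the regime $k_x, k_y\geq (A-1)L$ (where the hypothesis $r_z\leq|x-y|^A$ holds), yielding $p_{k_x,k_y,k_R}^{\infty,\infty,\infty}\leq C(L-k_R)(L+k_R)/((L+k_x)(L+k_y))$; plugged into the triple sum this factorises into $\bigl(\sum_{k_R}(L-k_R)^{-1+\theta+\eta}(L+k_R)\bigr)\prod_z\bigl(\sum_{k_z\geq(A-1)L}k_z^{-\theta+\eta}/(L+k_z)\bigr)$ and gives $CL^{1+\theta+\eta}\cdot L^{-2\theta+2\eta}=CL^{1-\theta+3\eta}$. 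The complementary regime---where at least one of $k_x, k_y$ is of order $L$ or smaller, so Corollary \ref{C:3crossings} no longer applies---is the main technical obstacle. The remedy is to bound $p_{k_x,k_y,k_R}^{\infty,\infty,\infty}$ from above by an appropriate two-point probability ($p_{k_z, k_R}^{\infty,\infty}\leq C(L/k_z)\log(L/k_R)$ for the $z\in\{x,y\}$ whose index still exceeds $L$, or $p_{k_R}^\infty\leq C\log(L/k_R)$ if both $k_x, k_y\leq L$) so as to retain the key factor $\log(L/k_R)$, and to split the $(k_x, k_y)$-sum into four regimes according to whether each index is below or above $L$. Each such regime contributes at most $CL^{1-\theta+3\eta}\log L$, which is absorbed into $CL^{1-\theta+4\eta}$; since $\eta$ is arbitrary, this concludes the proof.
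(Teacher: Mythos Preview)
Your approach is the same as the paper's: bound each $p^{\infty,\dots}$ pointwise by decomposing into same-loop and distinct-loop contributions (via Lemma~\ref{L:loopmeasure_annulus_quantitative}, Lemma~\ref{L:two_annuli}, Corollary~\ref{C:3crossings}, and the Poisson factorial-moment inequality), then sum. There are, however, two execution issues that prevent the argument as written from establishing the Claim with the stated exponents.

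First, the bound $p_{k_x,k_y}^{\infty,\infty}\le C\min(1,L^2/(k_xk_y))$ is correct but too loose for \eqref{E:c3}: summing it yields an extra $\log L$. Indeed, the region $\{k_xk_y\le L^2\}$ contributes
\[
\sum_{k_x\le L^2} k_x^{-\theta+\eta}\sum_{k_y\le L^2/k_x} k_y^{-\theta+\eta}
\sim L^{2(1-\theta+\eta)}\sum_{k_x\le L^2} k_x^{-1}\sim CL^{2-2\theta+2\eta}\log L,
\]
so your ``factored product split'' cannot give $CL^{2-2\theta+2\eta}$ from this bound. The paper instead splits each index at $AL$ (the threshold from Lemma~\ref{L:two_annuli}) and uses monotonicity $p_{k_x,k_y}^{\infty,\infty}\le p_{k_y}^\infty$ when $k_x\le AL$; this effectively replaces your bound by the sharper factored form $C\tfrac{L}{L+k_x}\cdot\tfrac{L}{L+k_y}$, which does sum to $CL^{2-2\theta+2\eta}$. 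Note also that Lemma~\ref{L:two_annuli} (for the same-loop term) requires $k_x,k_y\ge(A-1)L$; outside this range the desired bound still holds but only via the monotonicity argument, which you do not invoke.

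Second, in \eqref{E:c4} you end with $CL^{1-\theta+3\eta}\log L$ and absorb the logarithm into $CL^{1-\theta+4\eta}$, appealing to ``$\eta$ arbitrary''. But the Claim is stated for a fixed $\eta$, so this does not prove \eqref{E:c4} as written (it does suffice for Proposition~\ref{P:two-point}, where $\eta$ is indeed arbitrary). The paper avoids the extra logarithm by reducing the small-index regimes of \eqref{E:c4} directly to the already-proved \eqref{E:c2} and \eqref{E:c3}, rather than re-summing with the raw bound $\log(L/k_R)$ (which is what produces your $\log L$). Both issues are easily fixed by following the paper's split at $AL$ and using monotonicity to the lower-order $p^\infty$'s in the small-index regimes.
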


\begin{proof}[Proof of Claim \ref{claim}]
We start with \eqref{E:c1}. Let us prove it in the case $z=y$, the case $z=x$ being similar.
$p_{k_y}^\infty$ is bounded by the probability that there is a loop in $\Lc_{D(y,10)}^\theta$ crossing $D(y,|x-y|/2) \setminus D(y,e^{-k_y}|x-y|/2)$ which is by Lemma \ref{L:loopmeasure_annulus_quantitative} at most $C \frac{|\log|x-y||}{|\log|x-y||+k_y}$. Hence
\[
\sum_{k_y} k_y^{-\theta+\eta} p_{k_y}^{\infty} \leq C \sum_{k_y} k_y^{-\theta+\eta} \frac{|\log|x-y||}{|\log|x-y||+k_y} \leq C |\log|x-y||^{1-\theta+\eta}
\]
proving \eqref{E:c1}.

We now prove \eqref{E:c2}, again in the case $z = y$.
Let $A>0$ be large.
When $R \leq A$, we simply bound $( |\log |x-y|| - k_R )^{-2+\theta + \eta} p_{k_y,k_R}^{\infty,\infty}$ by $C|\log |x-y||^{-2+\theta + \eta} p_{k_y}^{\infty}$ and use \eqref{E:c1}:
\begin{align*}
& \sum_{k_y} \sum_{k_R \leq A} k_y^{-\theta+\eta} ( |\log |x-y|| - k_R )^{-2+\theta + \eta} p_{k_y,k_R}^{\infty,\infty} \\
& \leq C |\log |x-y||^{-2+\theta + \eta} \sum_{k_y} k_y^{-\theta+\eta} p_{k_y}^{\infty} \leq C |\log |x-y||^{-1 + 2\eta}.
\end{align*}
Let $k_R \geq A$. Since the disc $D(x,e^{k_R}|x-y|/2)$ contains the disc $D(y,(e^{k_R}-2)|x-y|/2)$ and since the disc $D(y, 3|x-y|/2)$ contains $D(x,|x-y|/2) \cup D(y,|x-y|/2)$, $p_{k_y,k_R}^{\infty,\infty}$ is bounded by the probability that there is a loop crossing $D(y,(e^{k_R}-2)|x-y|/2) \setminus  D(y, 3|x-y|/2)$ and a loop crossing $D(y,|x-y|/2) \setminus D(y,e^{-k_y}|x-y|/2)$. Because $k_R$ is large, $e^{k_R} - 2$ is very close to $e^{k_R}$. Using Lemma \ref{L:loopmeasure_annulus_quantitative} and dividing the above event according to whether these crossings are made by the same loop or two different loops (see the proof of Corollary \ref{C:3crossings} for more details), we obtain that
\begin{align*}
p_{k_y,k_R}^{\infty,\infty} \leq C \frac{|\log|x-y|| - k_R}{|\log|x-y||+k_y}.
\end{align*}
This shows that 
\begin{align*}
& \sum_{k_y} \sum_{k_R \geq A} k_y^{-\theta+\eta} ( |\log |x-y|| - k_R )^{-2+\theta + \eta} p_{k_y,k_R}^{\infty,\infty} \\
& \leq C \sum_{k_y} \sum_{k_R \geq A} \frac{k_y^{-\theta+\eta}}{|\log|x-y||+k_y} ( |\log |x-y|| - k_R )^{-1+\theta + \eta}
\leq C |\log |x-y||^{2\eta}
\end{align*}
finishing the proof of \eqref{E:c2}.

The proof of \eqref{E:c3} is similar to the proof of \eqref{E:c2} dividing the sum into two terms depending on whether $k_x \geq A |\log |x-y||$ or not. The probability that a single loop hits both $D(x,e^{-k_x}|x-y|/2)$ and $D(y,e^{-k_y}|x-y|/2)$ is handled by \eqref{E:L_polar00}.

Finally, we move on to the proof of \eqref{E:c4}. The contribution of integers $k_x \leq A|\log|x-y||$ and $k_y,k_R \geq 0$ to the sum \eqref{E:c4} can be bounded by the left hand side of \eqref{E:c2} for $z=y$, times
\[
\sum_{k_x}^{A|\log |x-y||} k_x^{-\theta+\eta} \leq C |\log |x-y||^{1-\theta+\eta}.
\]
This contribution is therefore at most $C|\log |x-y||^{1-\theta+3\eta}$. We can now assume that $k_x \geq A|\log|x-y||$. Similarly, using \eqref{E:c2} for $z=x$ and \eqref{E:c3}, we can assume that $k_y \geq A|\log|x-y||$ and $k_R \geq A$. Using Corollary \ref{C:3crossings},  we deduce that the contribution of these integers to the sum \eqref{E:c4} is bounded by
\begin{align*}
|\log|x-y|| \sum_{k_x,k_y \geq A|\log |x-y||} \sum_{k_R = A}^{|\log|x-y||} \frac{k_x^{-\theta+\eta}}{|\log|x-y|| + k_x} \frac{k_y^{-\theta+\eta}}{|\log|x-y|| + k_y} ( |\log |x-y|| - k_R )^{-1+\theta + \eta}
\end{align*}
which is at most $C |\log |x-y||^{1-\theta+3\eta}$ as claimed in \eqref{E:c4}. This concludes the proof of Claim \ref{claim} and of Proposition \ref{P:two-point}.
\end{proof}
\end{proof}

\begin{remark}
In the proof above, we showed that for all $\eta>0$ and $x \neq y \in \D$, \eqref{E:p5} is at most $C(\eta) |\log|x-y||^{1-\theta + 3\eta}$. The very same proof would also show that for all $\eta>0$ and $x \neq y \in \D$,
\begin{equation}
\Expect{ \prod_{z=x,y} \left( \log \frac{|x-y|}{r_z} \right)^{1-\theta + \eta} } \leq C(\eta) |\log |x-y||^{2(1-\theta) + 2\eta}
\end{equation}
and for all $A>0$ large,
\begin{equation}
\Expect{ \prod_{z=x,y} \left( \log \frac{|x-y|}{r_z} \right)^{1-\theta + \eta} \indic{\exists z \in \{x,y\}: r_z \leq |x-y|^A} } \leq c(\eta,A) |\log |x-y||^{2(1-\theta) + 2\eta}
\end{equation}
where $c(\eta,A) \to 0$ as $A \to \infty$.
We will not use these estimates in the current paper, but they will be needed in the companion paper \cite{JLQ23b}.
\end{remark}

\subsection{Proof of Theorem \ref{T:polar}}\label{SS:polar}

\begin{proof}[Proof of Theorem \ref{T:polar}]
Let $A \subset D$ be a closed set and assume that $A$ is not polar, i.e. assume that the probability that the closure of some cluster hits $A$ is positive. 
Let $\alpha < 1-\theta$. We want to show that $\Cap_{\log^\alpha}(A) > 0$.
Let $\Cc_i$, $i \geq 1$, be the clusters of $\Lc_\D^\theta$ ordered in decreasing diameters. By a union bound, there exists $i \geq 1$ such that $\Prob{\bar{\Cc}_i \cap A \neq \varnothing}>0$. Since $\Cc_i$ has a positive diameter almost surely, there is also $\eta >0$ such that $\Prob{\bar{\Cc}_i \cap A \neq \varnothing, \text{diam}(\Cc_i) > \eta }>0$. On the event that $\bar{\Cc}_i$ intersects $A$ and $\text{diam}(\Cc_i) > \eta$, let $X$ be any point of $\bar{\Cc}_i \cap A$ defined in a measurable way (for instance, choose the one that has minimal imaginary part among points of $\bar{\Cc}_i \cap A$ with minimal real part).
Let $\mu$ be the probability measure on $A$ defined for all Borel set $B$ by
\[
\mu(B) := \Prob{ X \in B, \bar{\Cc}_i \cap A \neq \varnothing, \text{diam}(\Cc_i)>\eta } / \Prob{ \bar{\Cc}_i \cap A \neq \varnothing, \text{diam}(\Cc_i)>\eta }.
\]
To show that $\Cap_{\log^\alpha}(A) >0$, it is enough to show that $\mu$ has a finite $\log^\alpha$-energy:
\begin{equation}
\label{E:polar1}
\int |\log |x-y||^\alpha \mu(\d x) \mu(\d y) < \infty.
\end{equation}
Let $y \in A$. Without loss of generality, we can assume that the diameter of $D$ is smaller than 1 so that $D \subset D(y,1)$. We can bound
\begin{align*}
\int |\log |x-y||^\alpha \mu(\d x)
\leq \sum_{k=0}^\infty \mu( D(y,e^{-k}) \setminus D(y,e^{-k-1}) ) (k+1)^{\alpha}.
\end{align*}
For any large $N \geq 1$, we perform the following discrete integration by parts
\begin{align*}
\sum_{k=0}^N \mu( D(y,e^{-k}) \setminus D(y,e^{-k-1}) ) (k+1)^{\alpha}
= \sum_{k=0}^N \left( \mu( D(y,e^{-k}) ) - \mu ( D(y,e^{-k-1}) ) \right) (k+1)^{\alpha} \\
= \mu( D(y,1) ) - \mu (D(y,e^{-N-1})) (N+1)^\alpha + \sum_{k=1}^N \mu( D(y,e^{-k})) ( (k+1)^\alpha - k^\alpha ).
\end{align*}
Using the bound $(k+1)^\alpha - k^\alpha \leq C k^{\alpha-1}$ for all $k \geq 1$, we obtain that
\[
\int |\log |x-y||^\alpha \mu(\d x)
\leq 1+ C \sum_{k=1}^\infty k^{\alpha-1} \mu ( D(y,e^{-k})).
\]
On the event that the diameter of $\Cc_i$ is larger than $\eta$, $\Cc_i$ has to intersect the circle $\partial D(y,\eta)$.
By definition of $\mu$, $\mu ( D(y,e^{-k}))$ can therefore be bounded by
\[
\frac{\P \Big( D(y,e^{-k}) \overset{\Lc_\D^\theta}{\longleftrightarrow} \partial D(y,\eta) \Big) }{ \Prob{ \bar{\Cc}_i \cap A \neq \varnothing, \text{diam}(\Cc_i)>\eta } }
\leq \frac{\P \Big( D(y,e^{-k}) \overset{\Lc_{D(y,1)}^\theta}{\longleftrightarrow} \partial D(y,\eta) \Big) }{ \Prob{ \bar{\Cc}_i \cap A \neq \varnothing, \text{diam}(\Cc_i)>\eta } } \leq k^{-1+\theta +o(1)}
\]
where in the last inequality we used Theorem \ref{T:large_crossing}. Putting things together, we obtain that for all $y \in A$,
\[
\int |\log |x-y||^\alpha \mu(\d x) \leq 1 + C \sum_{k=1}^\infty k^{\alpha-1 - 1+\theta + o(1)}.
\]
Since $\alpha < 1-\theta$ the above sum is finite. Integrating with respect to $y$, this proves \eqref{E:polar1} as desired.

We now turn to the other direction, i.e. we suppose that $\Cap_{\log^\alpha}(A) >0$ for some $\alpha > 1-\theta$ and we are going to show that $A$ is hit by the closure of a cluster with positive probability. Since $\Cap_{\log^\alpha}(A) >0$, there exists a probability measure $\mu$ on $A$ such that
\begin{equation}
\label{E:energy}
\int |\log |x-y||^\alpha \mu(\d x) \mu(\d y) < \infty.
\end{equation}
Let $\Cc$ be the outermost cluster surrounding some given point of $D \setminus A$. Let $d >0$ be the distance between that point and $A$ (which is positive since $A$ is closed).
For all $\eps >0$, we define the following random variable
\[
J_\eps := \frac{1}{Z_\eps} \int_A \indic{\Cc \cap D(x,\eps) \neq \varnothing} \mu(\d x)
\quad \text{where} \quad
Z_\eps = \P \Big( e^{-1} \partial \D \overset{\Lc_{\D}^\theta}{\longleftrightarrow} \eps \D \Big).
\]
Because $A$ and $\bar{\Cc}$ are closed, if they do not intersect each other, they have to stay at a positive distance from each other. Hence,
\[
\Prob{ \bar{\Cc} \cap A \neq \varnothing } \geq \liminf_{\eps \to 0} \Prob{J_\eps >0} \geq \liminf_{\eps \to 0} \Expect{J_\eps}^2 / ~\Expect{J_\eps^2}
\]
where in the last inequality we used Paley--Zygmund inequality. Thanks to the normalising constant $Z_\eps$, the expectation of $J_\eps$ remains uniformly bounded away from 0. Proposition \ref{P:two-point} allows us to bound the second moment of $J_\eps$:
\begin{align*}
\Expect{J_\eps^2} & = \int_{A \times A} \mu(\d x) \mu(\d y) \frac{1}{Z_\eps^2} \Prob{ \Cc \cap D(x,\eps) \neq \varnothing, \Cc \cap D(y,\eps) \neq \varnothing } \\
& \leq \int_{A \times A} \mu(\d x) \mu(\d y) |\log |x-y||^{1-\theta+o(1)}
\end{align*}
which is finite by \eqref{E:energy}. This proves that $\Prob{ \bar{\Cc} \cap A \neq \varnothing } >0$ as desired.
\end{proof}

\appendix

\section{Lower bound on \texorpdfstring{$f_\infty$}{f infinity}}\label{S:app_lower}
Let $\theta \in (0,1)$ and $f_\infty$ be the unique fixed point of $T$ \eqref{E:mapT} that belongs to $\bigcap_{\alpha \in (0,1-\theta)} \Fc_\alpha$ \eqref{E:spaceF_alpha}; see Lemma \ref{L:fixed-points}. Let $\xi_1$ be the exponent describing the decay of $f_\infty$: $f_\infty(s) = s^{-\xi_1+o(1)}$ as $s \to \infty$; see Corollary \ref{C:submultiplicative}.
As a direct consequence of Lemma \ref{L:fixed-points}, $\xi_1 \geq 1-\theta$.
As we saw in Section \ref{S:Bessel}, $f_\infty$ possesses a simple explicit expression, implying in particular that $\xi_1 = 1-\theta$.
We give here an independent short proof of this fact that does not rely on this expression:

\begin{lemma}\label{L:upper_bound_exponent}
The function $f_\infty$ satisfies
$
\int_1^\infty f_\infty(s) s^{-\theta} \d s = \infty.
$
In particular, $\xi_1 \leq 1-\theta$.
\end{lemma}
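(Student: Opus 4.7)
The plan is to exploit the fixed-point equation $f_\infty = T f_\infty$ in the affine form $f_\infty = T_0 + L f_\infty$, where $T_0(s) := 1-(1-1/s)^\theta \geq 0$ and $L$ is the positive linear operator
\[
Lg(s) := \theta (s-1)^\theta \int_1^\infty (s+t-1)^{-\theta-1} g(t)\, dt.
\]
First I would iterate: by positivity of $L$ and induction, the non-decreasing partial sums $f_n := \sum_{k=0}^n L^k T_0$ satisfy $f_n \leq f_\infty$. Their pointwise limit $F$ is a fixed point of $T$ dominated by $f_\infty$, hence lies in $\Fc_\alpha$ for every $\alpha \in (0, 1-\theta)$, so $F = f_\infty$ by the uniqueness clause of Lemma~\ref{L:fixed-points}. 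Tonelli then gives
\[
\int_1^\infty f_\infty(s)\, s^{-\theta}\, ds = \sum_{k=0}^\infty \psi_k, \qquad \psi_k := \int_1^\infty L^k T_0(s)\, s^{-\theta}\, ds.
\]

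The crux is to prove $\psi_k \to 1$, whence $\sum_k \psi_k = \infty$. To this end I would establish by induction on $k$ the asymptotic
\[
L^k T_0(s) \sim \frac{\theta^{k+1}(\log s)^k}{k!\, s} \quad \text{as } s \to \infty.
\]
The base case follows from the Taylor expansion $T_0(s) \sim \theta/s$. The inductive step reduces to verifying $L((\log t)^k/t)(s) \sim \theta(\log s)^{k+1}/((k+1)\, s)$: after the change of variables $t = sv$, expanding $(\log s + \log v)^k$ by the binomial theorem, and invoking the identity $\sum_{m=0}^k \binom{k}{m}(-1)^m/(m+1) = 1/(k+1)$ (which is $\int_0^1 (1-x)^k dx$), the dominant $(\log s)^{k+1}$ term has coefficient $\theta/(k+1)$. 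Combining this with the elementary computation $\int_1^\infty (\log s)^k s^{-\theta-1}\, ds = k!/\theta^{k+1}$ (via $x = \log s$ and $\int_0^\infty x^k e^{-\theta x} dx = k!/\theta^{k+1}$) formally gives $\psi_k \to 1$.

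The hard part will be upgrading the pointwise asymptotic to a statement about $\psi_k$ with enough uniformity in $k$: I would track constants in the induction to produce, for any $\eta > 0$, a bound $L^k T_0(s) \geq (1-\eta)\theta^{k+1}(\log s)^k/(k!\, s)$ valid on $s \geq s_0$ with $s_0$ independent of $k$. Since the peak of $x^k e^{-\theta x}$ sits at $x = k/\theta \gg \log s_0$ for large $k$, the ratio $\int_{\log s_0}^\infty x^k e^{-\theta x}\, dx \big/ (k!/\theta^{k+1})$ tends to $1$, so $\liminf_k \psi_k \geq 1-\eta$ and hence $\sum_k \psi_k = \infty$.

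The concluding implication $\xi_1 \leq 1-\theta$ is immediate from Corollary~\ref{C:submultiplicative}: if instead $\xi_1 > 1-\theta$, then for $\epsilon$ small enough, $f_\infty(s) \leq s^{-\xi_1+\epsilon}$ with $\xi_1 - \epsilon + \theta > 1$, forcing $\int_1^\infty f_\infty(s) s^{-\theta} ds < \infty$ and contradicting the first part.
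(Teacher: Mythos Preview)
Your series expansion $f_\infty = \sum_{k\geq 0} L^k T_0$ and the pointwise asymptotic $L^k T_0(s) \sim \theta^{k+1}(\log s)^k/(k!\,s)$ are correct, but the step you flag as ``the hard part'' is a genuine gap, and the induction you propose does not close. Concretely, the one-step inequality you would need is
\[
(k+1)\,s(s-1)^\theta \int_{\log s_0/\log s}^\infty (s+s^u-1)^{-\theta-1}\,u^k\,du \;\geq\; 1
\]
(obtained after the substitution $t=s^u$ in $L\bigl(\mathbf 1_{[s_0,\infty)}(\log t)^k/t\bigr)(s)$). For fixed $k$ this ratio tends to $1$ as $s\to\infty$, but for fixed $s$ and $k\to\infty$ the probability measure $(k+1)u^k\,du$ on $[0,1]$ concentrates at $u=1$, where the integrand factor $(s+s^u-1)^{-\theta-1}$ equals $(2s-1)^{-\theta-1}$; the $u\in[0,1]$ contribution therefore tends to $s(s-1)^\theta(2s-1)^{-\theta-1}\to 2^{-\theta-1}<1$. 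The $u>1$ tail does eventually compensate, but only once $k\gtrsim (\theta+1)\log s$, so there is no single $s_0$ that works for all $k$ via this naive induction. A further sanity check: if your uniform two-sided asymptotic held, summing would give $f_\infty(s)\sim \theta\,s^{\theta-1}$, whereas the true constant is $\sin(\pi\theta)/(\pi(1-\theta))$; for $\theta=1/2$ these are $1/2$ versus $2/\pi$. So the uniform asymptotic you aim for is actually false, and any salvage of your route would have to be more indirect (e.g.\ showing $\psi_k$ is nonincreasing and identifying its limit as $\int T_0\,\phi$ for a positive fixed point $\phi$ of the adjoint $L^*$).

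The paper bypasses all of this with a short duality trick. Writing $u(t):=(t-1)^{-\theta}(1-f_\infty(t))$, the fixed-point equation $f_\infty=T_0+Lf_\infty$ rearranges to $u = L^*u$, i.e.\ $u$ is a left eigenfunction of $L$ with eigenvalue $1$. Then
\[
\int_1^\infty u\,f_\infty
= \int_1^\infty u\,T f_\infty
= \int_1^\infty u\,T_0 + \int_1^\infty u\,L f_\infty
= \int_1^\infty u\,T_0 + \int_1^\infty (L^*u)\,f_\infty
= \int_1^\infty u\,T_0 + \int_1^\infty u\,f_\infty,
\]
and since $\int u\,T_0>0$ this forces $\int u\,f_\infty=\infty$. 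Because $u(s)\sim s^{-\theta}$ at infinity and has a finite limit at $1^+$ (read off from $u=L^*u$), this is equivalent to $\int_1^\infty f_\infty(s)\,s^{-\theta}\,ds=\infty$. In the language of your decomposition, this is exactly the observation that $\langle L^k T_0,u\rangle=\langle T_0,u\rangle$ for every $k$, so $\sum_k\langle L^k T_0,u\rangle=\infty$ --- the exact eigenfunction $u$ replaces the approximate one $s^{-\theta}$ and removes the need for any uniformity.
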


\begin{proof}
Let $u : s \in (1,\infty) \mapsto (s-1)^{-\theta} (1-f_\infty(s)) \in [0,\infty)$.
The fact that $Tf_\infty = f_\infty$ implies that for all $t > 1$,
\begin{equation}
\label{E:proof_infinity}
u(t) = \theta \int_1^\infty (s-1)^\theta (s+t-1)^{-\theta -1} u(s) \d s.
\end{equation}
This in particular implies that $u(s)$ has a finite limit as $s \to 1^+$.
We now compute the integral of $u$ times $f_\infty$. All the terms involved below are nonnegative, so we can safely exchange integrals and we obtain by definition of $T$:
\begin{align*}
& \int_1^\infty u(s) f_\infty(s) \d s
= \int_1^\infty u(s) T f_\infty (s) \d s \\
& = \int_1^\infty u(s) \Big( 1 - \Big(1-\frac1{s}\Big)^\theta \Big) \d s
+ \int_1^\infty f_\infty(t) \left( \theta \int_1^\infty (s-1)^\theta (s+t-1)^{-\theta-1} u(s) \d s \right) \d t.
\end{align*}
Using \eqref{E:proof_infinity} we deduce that
\begin{align*}
& \int_1^\infty u(s) f_\infty(s) \d s
= \int_1^\infty u(s) \Big( 1 - \Big(1-\frac1{s}\Big)^\theta \Big) \d s
+ \int_1^\infty u(t) f_\infty(t) \d t.
\end{align*}
The first integral on the right hand side being positive, the integral of $u$ times $f_\infty$ has to be infinite. This concludes the proof since $u(s)$ has a finite limit as $s \to 1^+$ and $u(s) \sim s^{-\theta}$ as $s \to \infty$.
\end{proof}

\section{Hypergeometric function}

In this section we recall the definition and a few properties of the hypergeometric function ${}_2 F_1$.
For $a,b \in \R$ and $c \in \R \setminus \{0, -1, -2, \dots \}$,
\begin{equation}
\label{E:Hypergeometric}
{}_2 F_1(a,b,c,z) = 1 + \sum_{n \geq 1} \frac{(a)_n (b)_n}{(c)_n} \frac{z^n}{n!},
\quad \quad |z| < 1
\end{equation}
where $(q)_n = q(q+1) \dots (q+n-1)$ for all $n \geq 1$.
${}_2 F_1(a,b,c,\cdot)$ can be analytically continued to the whole complex plane.
When $b<c$ and $z \in \C$ is not a real number larger or equal than 1, ${}_2 F_1(a,b,c,z)$ has the following integral representation \cite[15.3.1]{special}
\begin{equation}
\label{E:Hypergeometric_integral}
\frac{\Gamma(b) \Gamma(c-b)}{\Gamma(c)} {}_2 F_1(a,b,c,z) = \int_0^1 x^{b-1} (1-x)^{c-b-1} (1-zx)^a \d x.
\end{equation}
The following Pfaff transformation relates the behaviour at infinity of ${}_2 F_1(a,b,c,\cdot)$ in terms of the behaviour at $z=1$ of a different hypergeometric function \cite[15.3.5]{special}:
\begin{equation}
\label{E:Hypergeometric_Pfaff}
{}_2 F_1(a,b,c,z) = (1-z)^{-b} {}_2 F_1(c-a,b,c,z/(z-1))
\end{equation}
and the following transformation relates the behaviour at $z=1$ to the one at zero \cite[15.3.6]{special}:
\begin{align}
\label{E:Hypergeometric_01}
{}_2 F_1(a,b,c,z)
& = \frac{\Gamma(c) \Gamma(c-a-b)}{\Gamma(c-a) \Gamma(c-b)} {}_2 F_1 (a,b,a+b+1-c,1-z) \\
& + \frac{\Gamma(c) \Gamma(a+b-c)}{\Gamma(a) \Gamma(b)} (1-z)^{c-a-b} {}_2 F_1 (c-a,c-b,1+c-a-b,1-z), \nonumber
\end{align}
provided all the terms above make sense. In particular, \eqref{E:Hypergeometric_01} implies that when $c>a+b$, the value at $z=1$ can be expressed in terms of the Gamma function \cite[15.1.20]{special}
\begin{equation}
\label{E:Hypergeometric_z=1}
{}_2 F_1(a,b,c,1) = \frac{\Gamma(c) \Gamma(c-a-b)}{\Gamma(c-a) \Gamma(c-b)}.
\end{equation}

\bibliographystyle{plain}
\bibliography{bibliography}

\end{document}